\theoremstyle{plain} 
\newtheorem{thm}{Theorem}[section] 
\newtheorem{lem}[thm]{Lemma} 
\newtheorem{cor}[thm]{Corollary} 
\newtheorem{prop}[thm]{Proposition} 
\theoremstyle{definition} 
\newtheorem{defn}[thm]{Definition} 
\newtheorem{rem}[thm]{Remark} 
\newtheorem{step}{Step}[] 
\newtheorem{algor}[thm]{Algorithm} 
\theoremstyle{remark} 
\newtheorem{oprob}[thm]{Open problem} 
\newtheorem{prob}[thm]{Problem} 
\newcommand{\cA}{{\mathcal A}} 
\newcommand{\cB}{{\mathcal B}} 
\newcommand{\cC}{{\mathcal C}} 
\newcommand{\cF}{{\mathcal F}} 
\newcommand{\cI}{{\mathcal I}} 
\newcommand{\cR}{{\mathcal R}} 
\newcommand{\cS}{{\mathcal S}} 
\newcommand{\fS}{{\mathfrak S}} 
\newcommand{\Vc}{{\mathcal V}} 
\newcommand{\Uc}{{\mathcal U}}
\newcommand{\bP}{{\mathbb P}} 
\newcommand{\bN}{{\mathbb N}} 
\newcommand{\bZ}{{\mathbb Z}} 
\newcommand{\bC}{{\mathbb C}} 
\newcommand{\bQ}{{\mathbb Q}} 
\newcommand{\bR}{{\mathbb R}} 
\DeclareMathOperator{\rk}{rk} 
\DeclareMathOperator{\Aut}{Aut} 
\DeclareMathOperator{\PGL}{PGL} 
\DeclareMathOperator{\Gal}{Gal} 
\DeclareMathOperator{\cone}{c} 
\DeclareMathOperator{\poin}{Poin} 
\DeclareMathOperator{\dH}{dH} 
\DeclareMathOperator{\pen}{pen} 
\DeclareMathOperator{\Van}{V} 
\newcommand{\lmultiset }{(} 
\newcommand{\rmultiset }{)} 
\newcommand{\algo}[6] 
{
\begin{algor}{{\tt #1}{\rm (#2)}}\label{#1}\end{algor} 
\vspace{-6pt}\noindent{\it #3}. 
 
{\bf Input:} #4 
 
{\bf Output:} #5 
 
\newcounter{#1} 
\begin{list}{\textbf{\arabic{#1}.}}{\usecounter{#1}} 
#6\end{list}\vspace{3pt}} 
\title[Non-recursively freeness and non-rigidity] 
{Non-recursive freeness and non-rigidity of plane arrangements} 
\author[T.~Abe]{Takuro Abe} 
\address{ 
Takuro Abe 
\newline 
Department of Mechanical Engineering and Science, Kyoto University 
\newline 
Yoshida-Honmachi, Sakyo-ku, 
Kyoto 606-8501, JAPAN 
} 
\email{abe.takuro.4c@kyoto-u.ac.jp} 
\author[M.~Cuntz]{Michael Cuntz} 
\address{ 
Michael Cuntz 
\newline 
Institut f\"ur Algebra, Zahlentheorie und Diskrete Mathematik, 
\newline 
Fakult\"at f\"ur Mathematik und Physik, 
Leibniz Universit\"at Hannover 
\newline 
Welfengarten 1, D-30167 Hannover, GERMANY 
} 
\email{cuntz@math.uni-hannover.de} 
\author[H.~Kawanoue]{Hiraku Kawanoue} 
\address{Hiraku Kawanoue 
\newline 
Research Institute for Mathematical Sciences, 
Kyoto University 
\newline 
Kitashirakawa-Oiwakecho, Sakyo-ku, 
Kyoto 606-8502, JAPAN 
} 
\email{kawanoue@kurims.kyoto-u.ac.jp} 
\author[T.~Nozawa]{Takeshi Nozawa} 
\address{ 
Takeshi Nozawa 
\newline 
Maizuru National College of Technology 
\newline 
234 Shiraya, Maizuru 625-8511, JAPAN 
} 
\email{nozawa@kurims.kyoto-u.ac.jp} 
\thanks{ 
The first author's work was partially supported by Japan Society for the 
Promotion of Science 
Grant-in-Aid for Young Scientists (B), No.~24740012. 
\newline\indent 
The third author's work was partially supported by Japan Society for the 
Promotion of Science 
Grant-in-Aid for Young Scientists (B), No.~23740016. 
} 
\begin{document} 
\begin{abstract} 
In the category of free arrangements, inductively and recursively free 
arrangements are 
important. In particular, in the former, Terao's open problem asking whether 
freeness 
depends only on combinatorics is true. A long standing problem 
whether all free 
arrangements are recursively free or not 
was settled by the second author and Hoge 
very recently, by 
giving a free but non-recursively free plane arrangement consisting of 
27 planes. 
In this paper, we construct free but non-recursively free plane 
arrangements consisting of 13 and 15 planes, 
and show that the example with 13 planes is the smallest in the sense of 
the cardinality of planes. In other words, all free plane arrangements 
consisting of at most 12 planes are recursively free. To show this, 
we completely classify all free plane arrangements in terms of 
inductive freeness and three exceptions when the number of planes 
is at most 12. 
Several properties of the $15$ plane arrangement are proved 
by computer programs. Also, these two 
examples solve negatively a problem posed by Yoshinaga on 
the moduli spaces, (inductive) freeness and, rigidity 
of free arrangements. 
\end{abstract} 
\maketitle 
\begin{section}{Introduction} 
In the study of hyperplane arrangements, an important problem is 
to understand their freeness. 
In general, to determine whether 
a given arrangement is free or not is very difficult, 
and there is essentially only one way to check it, Saito's criterion (Theorem \ref{saito}). 
On the other hand, there is a nice way to construct a free arrangement 
from a given free arrangement, called 
the addition-deletion theorem (Theorem \ref{ad}). Since the empty arrangement 
is free, 
there is a natural question whether every free arrangement can be 
obtained, starting from the empty arrangement, 
by applying addition and deletion theorems. For simplicity, 
for the rest of this section, let us 
concentrate our interest on central arrangements in $\bC^3$. 
We say that a central arrangement $\cA$ is {\it inductively free} 
if it can be constructed by using only the addition theorem from 
the empty arrangement, and {\it recursively free} if we use 
both the addition and deletion theorems to construct it. 
A free arrangement which is not inductively free was found very soon (see 
Example 4.59 in \cite{OT} for example). 
However, a free but non-recursively free arrangement had not been 
found for a long time. 
Very recently, the second author and Hoge found a first such example \cite{CH}, 
which consists of 27 planes over $\bQ(\zeta)$ where $\zeta$ is a primitive fifth root in $\bC$. 
The aim of this paper is to give two new examples of free 
but non-recursively free arrangements in $\bC^3$ which have some further remarkable properties. 
Our examples consist of 13 and 15 planes defined over $\bQ$, and the former is the smallest. 
To show that there are no such arrangements when the number of planes is 
strictly less than 13, we also investigate the set of all 
free arrangements $\cA$ with $|\cA| \leq 12$. 
In other words, we give a complete classification of such 
free arrangements in terms of inductive, recursive freeness 
and three exceptions 
given in Definitions \ref{dH}, \ref{Pen} and \ref{443}. Now let us state 
our main theorem. 
\pagebreak[4] 
\begin{thm}\label{Main} 
Let $\cA$ be a central arrangement in $\bC^3$. 
\item[(1)] 
If $\cA$ is free with $|\cA|\leq12$, 
then $\cA$ is recursively free. More precisely, 
$\cA$ is either inductively free 
or one of the following arrangements 
characterized by their lattice structures. 
\begin{enumerate}\renewcommand{\labelenumi}{(\roman{enumi})} 
\item A dual Hesse arrangement, i.e., the arrangement $\cA$ 
with $|\cA|=9$, 
whose set $L_2(\cA)$ of codimension $2$ intersections 
consists of $12$ triple lines. 
\item A pentagonal arrangement, i.e., the arrangement $\cA$ 
with $|\cA|=11$ such that 
$L_2(\cA)$ consists of 
$10$ double lines, $5$ triple lines, 
$5$ quadruple lines and such that 
any $H\in\cA$ contains at most 
$5$ lines of $L_2(\cA)$. 
\item 
A monomial arrangement associated to the group $G(4,4,3)$, 
i.e., the arrangement $\cA$ 
with $|\cA|=12$, 
whose $L_2(\cA)$ consists of $16$ triple lines and $3$ 
quadruple lines. 
\end{enumerate} 
Moreover, the lattices of the arrangements in (i), (ii), and 
(iii) are realized over 
$\bQ[\sqrt{-3}]$, $\bQ[\sqrt{5}]$ and $\bQ[\sqrt{-1}]$, 
respectively. In particular, all free arrangements 
in $\bC^3$ are inductively free 
when $|\cA|\leq 8$ or $|\cA|=10$. 
\item[(2)] 
There exists a free but not recursively free 
arrangement $\cA$ 
over $\bQ$ with $|\cA|=13$. 
\end{thm}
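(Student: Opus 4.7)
The plan is to treat the two parts of Theorem~\ref{Main} separately. For part~(1), I would start from the observation that a free central arrangement $\cA\subset\bC^3$ has exponents $(1,e_2,e_3)$ with $e_2+e_3=|\cA|-1$ and, by Terao's factorization theorem, its characteristic polynomial is determined by the multiplicities $m_X:=|\cA_X|$ at $X\in L_2(\cA)$. Using the basic identity $\sum_X \binom{m_X}{2}=\binom{|\cA|}{2}$ together with integrality of the roots of $\chi(\cA,t)$, one enumerates all numerically possible ``$L_2$-types'' for a free arrangement of cardinality $n\leq 12$. This reduces the classification to a finite combinatorial case analysis.

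For each surviving candidate type, I would attempt to verify inductive freeness directly. Since every central line arrangement in $\bC^2$ is free, the restriction $\cA^H$ is automatically free for any $H\in\cA$, and hence by the addition--deletion theorem inductive freeness of $\cA$ reduces to finding some $H\in\cA$ such that $\cA\setminus\{H\}$ has exponents $(1,e_2-1,e_3)$ or $(1,e_2,e_3-1)$. Iterating this descent back to the empty arrangement gives inductive freeness. The candidates for which no such $H$ exists form the obstruction; for $n\leq 12$ these are precisely the three combinatorial types named in (i)--(iii). For each one, I would verify realizability over the stated quadratic field (by explicit construction and computation of the module of derivations) and then exhibit a ``detour'': a hyperplane $H_0\notin\cA$ for which $\cA\cup\{H_0\}$ is free and admits a free deletion of some other hyperplane, landing in a configuration already known to be inductively free. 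This proves recursive freeness in the three exceptional cases.

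For part~(2), the plan is to exhibit an explicit $13$-plane arrangement $\cA_{13}\subset\bC^3$ defined over $\bQ$ with a carefully chosen intersection lattice. Freeness is verified by Saito's criterion, namely by producing three homogeneous derivations $\theta_1,\theta_2,\theta_3$ whose Jacobian $\det[\theta_i(x_j)]$ agrees up to a nonzero scalar with $\prod_{H\in\cA_{13}}\alpha_H$. To prove non-recursive freeness, suppose by contradiction that there is a chain $\emptyset=\cA_0,\cA_1,\ldots,\cA_m=\cA_{13}$ of free arrangements differing pairwise by one hyperplane. Then $\cA_{m-1}$ is a free arrangement differing from $\cA_{13}$ by a single hyperplane and itself recursively free. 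Two subcases arise. In the deletion subcase, one of the $13$ arrangements $\cA_{13}\setminus\{H\}$ with $H\in\cA_{13}$ is free; this is a finite check via Saito's criterion applied to each deletion. In the addition subcase, some $\cA_{13}\cup\{H\}$ is free and recursively free, and the addition--deletion theorem constrains the restriction on $H$, reducing the a~priori infinite set of candidate hyperplanes $H$ to finitely many combinatorial types; for each such super-arrangement one re-runs the same two-case analysis.

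The main obstacle is the addition subcase of part~(2): the recursion on super-arrangements must be shown to terminate, meaning that every free $\cA_{13}\cup\{H\}$ encountered inherits the same defect (no productive free deletion other than back to $\cA_{13}$, and no productive free addition either), so that no chain can extend down toward the empty arrangement. In practice this branching analysis is too intricate to carry out purely by hand and, as the abstract indicates, relies on computer-assisted verification --- both for generating candidate hyperplanes $H$ via the forced restriction combinatorics on $H$ and for checking Saito's criterion on each intermediate arrangement.
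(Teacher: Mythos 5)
Your plan for part~(1) is structurally close to the paper's. The paper's key refinement is to first invoke Abe's criterion (Theorem~\ref{ABT}): if $\cA$ is free with $\exp=(1,a,b)$ and some $H$ satisfies $n_{\cA,H}>\min(a,b)$, then freeness forces $n_{\cA,H}\in\{a+1,b+1\}$, so $\cA\setminus\{H\}$ is free and (by induction) inductively free. Thus the only obstruction cases are arrangements $\cA$ with \emph{every} $n_{\cA,H}\le\min(a,b)$ (the set $\cS_\ell$), which is a much smaller enumeration than all $L_2$-types of free arrangements with $n\le12$. The paper then exploits three linear identities on the multiset $F(\cA)$ of intersection-point multiplicities (not just the one $\sum_X\binom{m_X}{2}=\binom{n}{2}$ you mention, but also $\sum_X(m_X-1)=\mu_\cA=ab+n-1$ and the bound $\sum_X m_X\le a\ell$ coming from $\cS_\ell$ membership), which brings the candidate list for $\cS_\ell$ with $\ell\le12$ down to six triples $(\ell,\min(a,b),F)$ by hand. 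Your looser enumeration would be larger, but the subsequent per-type analysis (lattice determination, realization over a quadratic field, and the detour argument) matches the paper. One caveat: iterating the descent to the empty arrangement requires knowing $\cA\setminus\{H\}$ is \emph{inductively} free, not merely free; the paper handles this with Lemma~\ref{Red} and a bootstrapping through $\cF_\ell=\cI_\ell$ for $\ell\le8$, $\cF_{10}=\cI_{10}$, etc.

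For part~(2), your plan has a real gap in the non-recursive-freeness argument, and it is exactly the point that makes the paper's argument work by hand. You frame non-recursive freeness as an open-ended recursion on super-arrangements $\cA_{13}\cup\{H\}$ whose termination ``must be shown,'' and you conclude that computer assistance is unavoidable. But the argument for the statement is a depth-one check, provided the 13-plane arrangement is chosen with the right lattice: one constructs $\cA$ so that it lies in $\cS_{13}$ (all $n_{\cA,H}=6=\min(\exp)$), which by Theorem~\ref{ABT} immediately rules out \emph{every} deletion being free -- no Saito computations needed -- and then one shows directly that \emph{no} line $L\notin\cA$ makes $\cA\cup\{L\}$ free. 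The latter is finite because the addition theorem forces $n_{\cA\cup\{L\},L}=7$, which together with the known lattice of $\cA$ leaves only five possible combinatorial types for $F_L(\cA\cup\{L\})$; each type forces $L$ through specific intersection points and yields an algebraic condition on the defining parameter $\lambda$ that is excluded by hypothesis. Once both deletion and addition are ruled out, \emph{no} free arrangement is adjacent to $\cA$, so no recursive chain $\emptyset=\cB_0,\dots,\cB_t=\cA$ exists -- there is no recursion to terminate. Your plan would also need to make explicit that not every free, non-inductively-free 13-plane arrangement works: the example must be engineered so that neither addition nor deletion produces a free neighbor. Finally, a smaller point: the paper proves freeness of the 13-plane arrangement via Yoshinaga's criterion on the Ziegler restriction (showing the degree-five component of the restricted derivation module vanishes) rather than Saito's criterion with explicit derivations; the paper notes the latter would also work but is lengthier.
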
 
Our proof is based on combinatorial methods. 
We can check Theorem \ref{Main} by easy computations by hand, 
or just drawing a nice picture of our arrangement. 
Theorem \ref{Main} contains a characterization of 
free arrangements $\cA$ in $\bC^3$ 
with $|\cA|\leq 12$. 
We also find an example which is free but not recursively free with 15 planes by computer calculations. 
 
Previous research in this direction was performed especially from the viewpoint of 
the holy grail in the field of arrangements, Terao's open problem 
(see \cite{T}, \cite{T2}, \cite{p-hT-83}, \cite{Z}, \cite{p-gZ-89}, \cite{p-gZ-90}, \cite{OT}, \cite{p-Y-12}):

\begin{oprob}[Terao]\label{probter} 
Is the freeness of the logarithmic derivation module 
of any arrangement over a fixed field $K$ a purely combinatorial 
property of its intersection lattice? 
\end{oprob} 
 
To be more precise: are there arrangements $\cA_1,\cA_2$ 
in the same vector space $V$ such that $L(\cA_1)\cong L(\cA_2)$, 
$D(\cA_1)$ is free, and $D(\cA_2)$ is not free? 
We formulate Terao's open problem not as a conjecture here, because 
originally it was a problem posed by Terao, and there is some evidence 
on a non-dependency of the freeness on combinatorics; 
we quote Ziegler \cite{p-gZ-90}: ``We believe that Terao's conjecture 
over large fields is in fact false.'' 
 
Terao's open problem 
was checked when $|\cA| \leq 11$ in \cite{WY}, 
and $|\cA|\leq 12 $ in \cite{FV}. 
By Theorem \ref{Main}, we can give another proof 
of Terao's open problem 
when $|\cA|\leq 12$ which is originally due to Faenzi and Vall\`{e}s. 
\begin{cor}[Faenzi-Vall\`es \cite{FV}] 
Freeness of arrangements depends only on their combinatorics for central 
arrangements $\cA$ in $\bC^3$ with $|\cA|\leq12$. 
\end{cor}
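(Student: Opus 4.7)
The plan is to derive the corollary by a short case analysis based on Theorem \ref{Main}(1). Fix central arrangements $\cA_1,\cA_2$ in $\bC^3$ with $|\cA_i|\le 12$ and $L(\cA_1)\cong L(\cA_2)$, and assume $\cA_1$ is free; the goal is to conclude that $\cA_2$ is also free. Applying Theorem \ref{Main}(1) to $\cA_1$ places it in one of four combinatorially distinguished classes: inductively free, dual Hesse, pentagonal, or $G(4,4,3)$ monomial.

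If $\cA_1$ is inductively free, I would transport its inductive chain to $\cA_2$ along the lattice isomorphism. The hypotheses of the addition theorem (Theorem \ref{ad}) depend only on the exponents of successive subarrangements and on the multiplicity of the relevant intersection, and both of these data are determined by the lattice. Consequently, inductive freeness is itself a combinatorial invariant, so $\cA_2$ admits the same chain and is inductively free.

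In each of the three exceptional cases (i)--(iii), the $L_2$-data recorded in Theorem \ref{Main}(1) is extracted from $L(\cA_1)$, so $\cA_2$ satisfies exactly the same description and must have the combinatorial type of a dual Hesse, pentagonal, or $G(4,4,3)$ monomial arrangement. Each of these is free in every realization: for the dual Hesse and the $G(4,4,3)$ monomial this is classical (they are reflection arrangements, so Terao's theorem applies), while the pentagonal case is handled by the explicit verification supplied when Definition \ref{Pen} is introduced.

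The main obstacle is thus not the corollary itself, which collapses to the above case distinction, but the input from Theorem \ref{Main}(1): one needs the strong statement that no further combinatorial exceptions occur below thirteen hyperplanes, together with the recognition that the $L_2$-data in (i)--(iii) is strong enough to identify each exceptional lattice. Once that classification is in place, freeness is simultaneously established in every class and the corollary follows.
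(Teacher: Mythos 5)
Your approach is essentially the one the paper intends, since the Corollary is stated as an immediate consequence of Theorem \ref{Main}(1) without a spelled-out proof. The case analysis is sound, and the inductively free case is handled correctly: inductive freeness is determined by the lattice because the hypothesis of Theorem \ref{ad} (the count $|\cA_i\cap H|$) and the exponents along the chain are all read off from $L$.

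There is, however, a small but genuine gap in how you dispose of the three exceptional cases. The phrase ``free in every realization'' is exactly what needs proof, and Terao's theorem for reflection arrangements does not supply it: Terao's theorem says the specific reflection arrangement $(x^3-y^3)(y^3-z^3)(z^3-x^3)=0$ (resp.\ $(x^4-y^4)(y^4-z^4)(z^4-x^4)=0$) is free, not that every arrangement whose lattice is isomorphic to its lattice is free. A priori $\cA_2$ could be a different realization of that lattice that is not $\PGL$-equivalent to the reflection arrangement, and for such a realization freeness would not follow from Terao. What actually closes the loop is the rigidity established in \S\S 3.2, 4.6--4.7, 5.1: for each of the three exceptional lattices the realization is unique up to $\PGL(3,\bC)$ and a Galois conjugation, and both operations preserve freeness. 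Your remark that the $L_2$-data ``is strong enough to identify each exceptional lattice'' speaks only to recognizing the combinatorial type, which is weaker than this geometric uniqueness; you should invoke the unique-realization statements explicitly rather than Terao's theorem. Once that is added, the argument is complete and matches the paper's route (the paper itself, incidentally, verifies freeness of the three exceptional arrangements directly via Theorem \ref{ABT} in \S\S 3.3, 4.7, 5.2 rather than citing Terao's theorem, but that is an inessential variation).
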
 
 
By investigating the structure of the classification in 
Theorem \ref{Main}, we can say that almost all the free 
arrangements with small exponents are inductively free. 
\begin{cor}\label{le4} 
Let $\cA$ be a free arrangement in $\bC^3$ 
with $\exp(\cA)=(1,a,b)$. 
If $\min(a,b)\leq4$, then 
$\cA$ is either inductively free or 
a dual Hesse arrangement appearing in Theorem \ref{Main}. 
\end{cor}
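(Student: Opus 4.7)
Without loss of generality assume $a\le b$, so the hypothesis reads $a\le 4$. My plan is to induct on $|\cA|=1+a+b$, using Theorem \ref{Main}(1) for the base case and Terao's addition--deletion theorem (Theorem \ref{ad}) for the step. For $|\cA|\le 12$, Theorem \ref{Main}(1) leaves only three possibilities beyond inductive freeness; reading off the exponents from the multiplicities of $L_2(\cA)$ via $\chi(\cA,t)=(t-1)(t-a)(t-b)$ and $\sum_{L}(m(L)-1)=a+b+ab$, I find $\exp(\cA)=(1,4,4)$ for the dual Hesse, $(1,5,5)$ for the pentagonal arrangement, and $(1,5,6)$ for the $G(4,4,3)$ arrangement. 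Only the dual Hesse satisfies $\min(a,b)\le 4$, so the conclusion holds in this range.

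For $|\cA|\ge 13$ the exponents obey $b\ge 8>a$. I would seek a hyperplane $H\in\cA$ with $|\cA^H|\in\{a+1,b+1\}$; Theorem \ref{ad} then forces $\cA':=\cA\setminus\{H\}$ to be free with exponents either $(1,a-1,b)$ or $(1,a,b-1)$. In either case the smaller of the last two exponents is at most $a\le 4$ and $|\cA'|\ge 12$, so $\cA'$ is not the dual Hesse; the inductive hypothesis gives that $\cA'$ is inductively free, and the addition direction of Theorem \ref{ad} promotes this to $\cA$ itself.

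The main obstacle is exhibiting such a deletable $H$ when $a\le 4$ and $b\ge 8$. The dual Hesse (with all $|\cA^H|=4$ and $a+1=b+1=5$) shows that such an $H$ can genuinely fail to exist when $b$ is small, but I expect this blockage to disappear once $b\ge 8$. The approach I would take combines the double count $\sum_{H}|\cA^H|=\sum_{L}m(L)$ with the pair identity $\binom{|\cA|}{2}=\sum_{L}\binom{m(L)}{2}$ and the freeness-imposed bound $m(L)\le b+1$: assuming no $H$ has $|\cA^H|\in\{a+1,b+1\}$ squeezes the multiplicity profile of $L_2(\cA)$ into so narrow a window that, for $a\le 4$ and $b\ge 8$, one of the counts is violated. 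I would then split into the cases $a\in\{0,1,2,3,4\}$: the cases $a\le 1$ reduce to near-pencil geometry, handled by direct inspection, while $a\in\{2,3,4\}$ leave only a bounded number of local configurations, each eliminable by the same combinatorial techniques developed in the proof of Theorem \ref{Main}. This last case-by-case analysis is where I expect the bulk of the work.
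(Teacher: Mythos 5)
Your overall skeleton matches the paper's: induct on $|\cA|$ with Theorem \ref{Main}(1) as base, and use the addition-deletion theorem for the step. Your reading off of the exponents of the three exceptions is correct, and your handling of $\cA'\in\cF_{\ell-1}$ by the inductive hypothesis is right. But the proposal leaves a genuine gap exactly where you flag ``the main obstacle'': you never actually exhibit a deletable hyperplane, you only speculate that a counting argument ``should'' force one to exist and say the case-by-case analysis over $a\in\{0,\ldots,4\}$ is ``where I expect the bulk of the work.'' That step is the whole content of the corollary and cannot be left as a sketch.

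The paper closes the gap without any case split. First, by Theorem \ref{ABT}, a deletable $H$ exists precisely when $\cA\notin\cS_\ell$, i.e.\ precisely when some $n_{\cA,H}>\min(a,b)=a$; there is no need to aim directly for $n_{\cA,H}\in\{a+1,b+1\}$, since Theorem \ref{ABT} gives that for free. So one only has to rule out $\cA\in\cS_\ell$ when $\ell\geq11$ and $a\leq4$. Second, Lemma \ref{a-2} (whose proof is a short argument via supersolvability) gives: if $\cA\in\cS_\ell$ then $\mu_P(\cA)\leq a-2$ for every $P$, since otherwise $\cA$ would escape $\cS_\ell$. Combined with $n_{\cA,H}\leq a$ and the identity $\mu_{\cA,H}=\sum_{P\in H}\mu_P(\cA)=|\cA|-1$, one gets $\ell-1\leq(a-2)\cdot a\leq 8$, contradicting $\ell\geq11$. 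This is a single two-line estimate, uniform in $a$, and it is precisely the piece of the argument that your proposal does not supply. To repair the proposal you would need to prove (or cite) the bound $\mu_P(\cA)\leq a-2$ on points of $H$ for $\cA\in\cS_\ell$; the double-count and pair-count identities you invoke are not by themselves sharp enough without that local bound.

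One small inefficiency in your proposal: you take the base case to be $|\cA|\leq12$ and then work from $|\cA|\geq13$, giving $b\geq8$; the paper bases at $|\cA|\leq10$ and only needs $\ell-1\geq10$, which makes the final inequality $(a-2)a\leq8<10$ tight and transparent.
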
 
 
Another formulation of Problem \ref{probter} is using the moduli space 
of all arrangements whose intersection lattice is a given lattice, see 
Definition \ref{modulispace} for details: 
\begin{oprob} 
Does a lattice $L$ exist such that the moduli space $\Vc(L)$ contains a free and a non-free arrangement? 
\end{oprob} 
 
Not much is known about this moduli space regarding freeness; 
Yuzvinsky \cite{p-sY-93} proved that the free arrangements form 
an open subset in $\Vc(L)$ (see also \cite[Theorem\ 1.50]{p-Y-12} 
for the case of dimension three). 
The first intuition one gets when working with these notions is, 
that either the moduli space is very big and the free arrangements 
are inductively free, or the moduli space is zero dimensional and 
all its elements are Galois conjugates (let us call such an 
arrangement \emph{rigid}). 
Yoshinaga proposed the following problem which is stronger 
than Terao's open problem 
since the property of being free is invariant 
under Galois automorphisms: 
 
\begin{prob}[Yoshinaga, {\cite[p.\ 20, (11)]{p-Y-12}}]\label{yopen} 
Is a free arrangement either inductively free, or rigid? In 
other words, does the 
following inclusion hold? 
\[ \{\text{Free arrangements}\} 
{\subset} \{\text{Inductively free}\} \cup \{\text{Rigid}\}.\] 
\end{prob} 
Beyond the fact that we find new small 
arrangements which are free but not recursively free, 
our examples with $13$ and $15$ planes 
provide a negative answer to Yoshinaga's 
problem \ref{yopen}, 
i.e., the following holds. 
 
\begin{thm} 
Let $L$ be the intersection lattice of our 13 or 15 plane arrangements. 
Then the moduli spaces $\Vc(L)$ of them 
are one-dimensional (hence not rigid), and not inductively free, but 
free. In particular, these give negative answers to 
Problem \ref{yopen}. 
\end{thm}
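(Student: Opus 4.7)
The plan is to decompose the theorem into its substantive assertions for each of the two lattices $L$: freeness of the arrangement, failure of inductive freeness, and $\dim \Vc(L)=1$ (which immediately yields non-rigidity). Freeness of our specific $\bQ$-realizations is already established: for the 13 plane case it is part of Theorem~\ref{Main}(2), and for the 15 plane case it follows from the computer-assisted portion of the paper. Failure of inductive freeness is a combinatorial invariant of the intersection lattice, so it follows from the fact that neither arrangement is even recursively free, and propagates to every realization of $L$. The genuine mathematical content to prove is therefore the dimension equality $\dim \Vc(L)=1$, together with the production of an explicit one-parameter family of realizations.

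For the 13 plane case I would proceed by hand. After fixing four members of $\cA$ in sufficiently generic position and using the $\PGL_3(\bC)$-action to normalize them, each remaining hyperplane is described by a linear form whose coefficients must satisfy the polynomial conditions encoded in $L_2(\cA)$. Walking through the incidences of $L_2(\cA)$ systematically produces an explicit affine variety, from which one should be able to read off that exactly one free parameter remains. The corresponding one-parameter deformation of the $\bQ$-arrangement then both certifies $\dim\Vc(L)\ge 1$ and produces infinitely many realizations that are pairwise non-Galois-conjugate, since only countably many points lie in any single Galois orbit. For the 15 plane arrangement the same strategy applies in principle but the bookkeeping is too large to carry out by hand; I would encode the defining equations of $\Vc(L)$ in a computer algebra system, quotient by a $\PGL_3$-normalization, and compute a primary decomposition, verifying that each irreducible component has Krull dimension one.

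Once $\dim\Vc(L)\ge 1$ is established in either case, the moduli space consists of uncountably many points lying in infinitely many distinct Galois orbits, so the arrangement is not rigid; combined with freeness and the failure of inductive freeness this produces an element of $\{\text{free}\}\setminus(\{\text{inductively free}\}\cup\{\text{rigid}\})$ and gives the desired negative answer to Problem~\ref{yopen}. The main obstacle, especially for the 15 plane example, is the upper bound $\dim\Vc(L)\le 1$: the lower bound is witnessed concretely by an explicit deformation, but showing that no further moduli exist requires careful elimination of variables and, for the 15 plane case, a trusted symbolic computation.
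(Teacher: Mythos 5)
Your proposal follows the same overall route as the paper: reduce the theorem to (a) freeness, (b) failure of inductive freeness, (c) $\dim\Vc(L)=1$, and compute the moduli space by a $\PGL_3$-normalization of four hyperplanes followed by imposing the $L_2$-incidences on the remaining ones. That is precisely what the paper does in \S\ref{13moduli} for the $13$-plane lattice and in Algorithm \ref{ModuliSpaceIsFree} for the $15$-plane lattice, and your use of the combinatorial invariance of inductive freeness to propagate (b) across the moduli space is correct.

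The one place your proposal falls short of what the theorem actually asserts is (a). You take freeness as established by citing Theorem \ref{Main}(2), which only supplies freeness of the specific $\bQ$-realization. But the theorem claims the entire moduli space $\Vc(L)$ is free, and unlike inductive freeness, plain freeness is not known to be combinatorial, so it does not automatically propagate from one realization to the rest. (Indeed, if it failed to propagate, the lattice $L$ would be a counterexample to Terao's problem, which the paper would certainly have flagged.) The paper closes this gap in \S 6.3 by applying Yoshinaga's criterion (Proposition \ref{yoshinaga}) uniformly in the parameter: one shows $D(\cA''_\lambda,m)_5=0$ for every $\lambda\in\bC\setminus Z$, which forces $\exp(\cA_\lambda)=(1,6,6)$ and hence $\cA_\lambda\in\cS_{13}\subset\cF_{13}$ for every point of the moduli space; for $15$ planes the same uniformity is built into Algorithm \ref{ModuliSpaceIsFree} (a Saito determinant computed over the function field of the moduli space). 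Your weaker statement — one free, non-inductively-free, non-rigid arrangement — is nonetheless enough for the ``in particular'' conclusion, the negative answer to Problem \ref{yopen}, so the shortfall is in proving the full strength of the theorem, not in refuting Yoshinaga's problem.
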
 
 
We would like to emphasize that to our knowledge the two lattices presented here are the only two known such examples. Lattices satisfying these strong properties appear to be extremely rare. 
 
\begin{rem} 
The example with 13 planes was found by the second author in \cite{C} and 
by the fourth author in \cite{AKN} independently. 
The example with 15 planes was found and investigated by the second author in \cite{C}. 
The preprints \cite{AKN} and \cite{C} will not be published; 
all the results of \cite{AKN} and \cite{C} have been merged into this paper. 
\end{rem} 
 
The organization of our paper is as follows. 
In \S \ref{prel}, we introduce several definitions and results 
which will be used in this paper. 
In \S \ref{S9}, \S \ref{S11} and \S \ref{S12}, we prove Theorem \ref{Main} (1). 
In \S \ref{13}, we prove Theorem \ref{Main} (2). 
In \S \ref{15}, we exhibit the example with 15 planes having the same properties 
as the example with 13 planes by using algorithms presented 
in the same section. 
 
\subsection*{Acknowledgements} 
We are grateful to M.~Yoshinaga for his helpful comments. 
\end{section}

\begin{section}{Preliminaries}\label{prel} 
In this section, we summarize several definitions and results 
which will be used in this 
paper. 
For the basic reference on the arrangement theory, 
we refer to Orlik-Terao \cite{OT}. 
 
\begin{subsection}{Basic facts on arrangements} 
Let $V=\bC^n$. An {\it arrangement of hyperplanes} $\cA$ is 
a finite set of affine hyperplanes in $V$. An arrangement $\cA$ 
is {\it central} if every hyperplane is linear. 
For a hyperplane $H \subset V$, define 
$$ 
\cA \cap H=\{H \cap H' \neq \emptyset \mid H' \in \cA,\ H ' \neq H\}. 
$$ 
Hence $\cA \cap H$ is an arrangement in an $(n-1)$-dimensional vector space $H$. 
Let us define a {\it cone} $\cone\!\cA$ of an affine arrangement 
$\cA$ as follows. If $\cA$ is defined by a 
polynomial equation $Q=0$, then $\cone\!\cA$ is defined by 
$z\cdot{\cone}Q=0$, where ${\cone}Q$ is the homogenized 
polynomial of $Q$ by the new coordinate $z$. 
When $\cA$ is central, let us fix a defining linear form 
$\alpha_H \in V^*$ for each $H \in \cA$. From now on, 
let us concentrate our interest on 
central arrangements in $\bC^n$ when $n=2$ or $3$. 
So arrangements of lines or planes. 
Even when $n=3$, we view arrangements in $\bC^3$ as arrangements of lines in 
$P_{\bC}^2$ and call them line arrangements when there are 
no confusions. Let $S=S(V^*)=\bC[x_1,\ldots,x_n]$ be 
the coordinate ring of $V$. 
For a central arrangement $\cA$, define 
$$ 
D(\cA)=\left\{ \theta \in \bigoplus_{i=1}^n S \partial/\partial {x_i} \mid 
\theta(\alpha_H) \in S \alpha_H\ \text{for all}\ H \in \cA\right\}. 
$$ 
$D(\cA)$ is called the {\it logarithmic derivation module}. 
$D(\cA)$ is reflexive, but not free in general. 
We say that $\cA$ is free with {\it exponents} $(d_1,\ldots,d_n)$ if 
$D(\cA)$ has a homogeneous free basis $\theta_1,\ldots,\theta_n$ with 
$\deg \theta_i=d_i\ (i=1,\ldots,n)$. 
Here the {\it degree} of a homogeneous derivation 
$\theta=\sum_{i=1}^n f_i \partial/\partial {x_i}$ 
is defined by $\deg f_i$ for all non-zero $f_i$. 
Note that the Euler derivation $\theta_E= 
\sum_{i=1}^n x_i \partial/\partial x_i$ is contained in 
$D(\cA)$. 
In particular, it is easy to show that 
$ 
D(\cA)$ has $S \theta_E $ as its direct summand 
for a non-empty arrangement $\cA$. 
Hence $\exp(\cA)$ always contains $1$ if $\cA$ is not empty. 
To verify the freeness of $\cA$, Saito's criterion is essential. 
\begin{thm}[Saito's criterion, \cite{S}]\label{saito} 
Let $\theta_1,\ldots,\theta_n \in D(\cA)$. Then the following 
three conditions are equivalent. 
\item[(1)] 
$\cA$ is free with basis $\theta_1,\ldots,\theta_\ell$. 
\item[(2)] 
$\det [\theta_i(x_j)]=c \prod_{H \in \cA} \alpha_H$ 
for some non-zero $c \in \bC$. 
\item[(3)] 
$\theta_1,\ldots,\theta_n$ are all homogeneous derivations, 
$S$-independent and 
$\sum_{i=1}^n \deg\theta_i=|\cA|$. 
\end{thm}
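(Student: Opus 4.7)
The plan is to organize the proof around a single \emph{divisibility lemma}: for any $\theta_1,\ldots,\theta_n \in D(\cA)$, the Jacobian determinant $M := \det[\theta_i(x_j)] \in S$ is divisible by $Q := \prod_{H \in \cA} \alpha_H$. The lemma is easy to prove: fixing $H \in \cA$, choose linear coordinates making $\alpha_H = x_1$; the relation $\theta_i(x_1) \in S\, x_1$ forces the first column of the matrix to be divisible by $x_1$, hence $\alpha_H \mid M$. Distinct $\alpha_H$'s define distinct hyperplanes and are therefore pairwise coprime in the UFD $S$, so their product divides $M$.

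Given the lemma, I would close the cycle $(3) \Rightarrow (2) \Rightarrow (1) \Rightarrow (3)$. For $(3) \Rightarrow (2)$: when the $\theta_i$ are homogeneous and $S$-linearly independent, each entry $\theta_i(x_j)$ is homogeneous of degree $\deg \theta_i$, so $M$ is homogeneous of degree $\sum_i \deg \theta_i = |\cA| = \deg Q$, while $M \neq 0$ by independence; combined with $Q \mid M$, this forces $M = c Q$ for a non-zero scalar $c$. For $(2) \Rightarrow (1)$: given $M = cQ$ with $c \neq 0$, the $\theta_i$ are $S$-independent (since $M \neq 0$), and any $\theta \in D(\cA)$ can be expanded by Cramer's rule as $\theta = \sum_i (M_i/M)\, \theta_i$, where $M_i$ is the determinant obtained by substituting $(\theta(x_j))_j$ for the $i$-th row; applying the divisibility lemma to the derivations $\theta_1,\ldots,\widehat{\theta_i},\ldots,\theta_n,\theta \in D(\cA)$ yields $Q \mid M_i$, hence $M_i/M \in S$, and the $\theta_i$ form a basis of $D(\cA)$.

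The remaining implication $(1) \Rightarrow (3)$ is the step I expect to require the most care. Starting from a free basis of the graded submodule $D(\cA) \subset \bigoplus_i S\, \partial/\partial x_i$, one first argues that over the positively graded ring $S$ a homogeneous free basis exists: extract a minimal generating set from the homogeneous components of the given basis, then use Nakayama and the rank count to conclude it is a basis. $S$-independence is built into ``basis''. The degree identity $\sum_i \deg \theta_i = |\cA|$ then follows by applying the already established $(1) \Rightarrow (2)$-compatible computation (i.e., the divisibility lemma plus non-vanishing of $M$ on any free basis) to this homogeneous basis. The only real obstacle is the graded-module bookkeeping needed to pass from a general basis to a homogeneous one; everything else in the theorem is formal manipulation driven by the divisibility lemma.
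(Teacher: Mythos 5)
The paper does not prove this theorem; it cites Saito's original paper \cite{S} and uses the criterion as a black box, so there is no in-paper argument to compare against. Your proof is essentially the standard textbook argument (as in Orlik--Terao, Theorems 4.19 and 4.23), built on the same divisibility lemma, and the chain $(3)\Rightarrow(2)\Rightarrow(1)$ is correct and well organized.

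One caveat concerns $(1)\Rightarrow(3)$, and it is really a defect of the theorem's informal phrasing rather than of your argument. As you set it up, from a not-necessarily-homogeneous free basis $\theta_1,\ldots,\theta_n$ you pass, via graded Nakayama, to a \emph{different} homogeneous basis $\theta_1',\ldots,\theta_n'$ and verify the degree identity for those. That establishes ``$D(\cA)$ free $\Rightarrow$ there exists a homogeneous basis with the stated degree sum,'' but it does not show that the originally given $\theta_i$ are themselves homogeneous, which is what condition (3) literally asserts about them; indeed a free module can have non-homogeneous bases, so $(1)\Rightarrow(3)$ is false as a statement about arbitrary given $\theta_i$. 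The clean formulation either fixes the $\theta_i$ to be homogeneous from the outset (in which case your $(1)\Rightarrow(3)$ reduces to $S$-independence plus the degree count via the divisibility lemma, no Nakayama needed) or states the $(1)\Leftrightarrow(2)$ equivalence for arbitrary derivations and treats the homogeneous refinement (3) separately. You already sensed this was the delicate spot; I would simply make the hypothesis of homogeneity explicit rather than manufacture a new basis, which changes the object being discussed.
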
 
For a {\it multiplicity} $m\colon\cA \rightarrow \bZ_{>0}$, 
we can define the logarithmic 
derivation module $D(\cA,m)$ of the {\it multiarrangement} $(\cA,m)$ by 
$$ 
D(\cA,m)=\left\{ 
\theta \in \bigoplus_{i=1}^n S \partial/\partial {x_i} \mid 
\theta(\alpha_H) \in S \alpha_H^{m(H)}\ \text{for all}\ H \in \cA 
\right\}. 
$$ 
The freeness, exponents, and Saito's criterion 
for a multiarrangement can be defined in the same manner as 
in the case of arrangements without multiplicities. 
Note that the Euler derivation is not 
contained in $D(\cA,m)$ in general. 
Recall that every central (multi)\-arrangement in $\bC^2$ is free 
since $\dim_\bC \bC^2=2$ and $D(\cA ,m)$ is reflexive. Hence the first 
non-free central arrangement occurs when $n=3$. 
Define the {\it intersection lattice} $L(\cA)$ of $\cA$ by 
$$ 
L(\cA)=\left\{ \bigcap_{H \in \cB} H \mid \cB \subset \cA\right\}. 
$$ 
Reverse inclusion defines a poset structure on $L(\cA)$. 
The poset $L(\cA)$ is considered to be the combinatorial data of $\cA$. 
Define the {\it M\"{o}bius function} 
$\mu\colon L(\cA) \rightarrow \bZ$ by 
$$ 
\mu(V)=1,\quad 
\mu(X)=-\sum_{Y \in L(\cA),\ X \subsetneqq Y \subset V} \mu(Y) 
\quad (X \neq V). 
$$ 
Then the {\it characteristic polynomial} $\chi(\cA,t)$ 
of $\cA$ is defined by 
$\chi(\cA,t)=\sum_{X \in L(\cA)} \mu(X)\ t^{\dim X}$. 
It is known that 
$$ 
\chi(\cA,t)=t^n\poin(\bC^n \setminus 
\textstyle{\bigcup}_{H \in \cA} H,-t^{-1}). 
$$ 
Hence $\chi(\cA,t)$ is both a combinatorial and a topological 
invariant of an arrangement. 
The freeness of $\cA$ has implications for $\chi(\cA,t)$: 
\begin{thm}[Factorization, \cite{T2}]\label{factorization} 
Assume that a central arrangement $\cA$ is free with 
$\exp(\cA)=(d_1,\ldots,d_n)$. Then 
$$ 
\chi(\cA,t)=\prod_{i=1}^n (t-d_i). 
$$ 
\end{thm}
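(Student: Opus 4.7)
The plan is to deduce the factorization from the Solomon--Terao formula, which for any central arrangement in $\bC^n$ expresses $\chi(\cA,t)$ as a specialization of the Hilbert (or Poincar\'e) series of the graded $S$-modules $D^p(\cA)$ of logarithmic $p$-derivations (with $D^1(\cA)=D(\cA)$). Once Solomon--Terao is in hand, freeness reduces the claim to a short symbolic computation.

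First, I would observe that if $\cA$ is free with basis $\theta_1,\ldots,\theta_n$ of $D(\cA)$ with $\deg\theta_i=d_i$, then every module $D^p(\cA)$ is also free, with an $S$-basis given by the wedges $\theta_{i_1}\wedge\cdots\wedge\theta_{i_p}$ for $1\le i_1<\cdots<i_p\le n$. This follows from Saito's criterion (Theorem \ref{saito}) together with the isomorphism $D^p(\cA)\simeq\bigwedge^p D(\cA)$, valid in the free case. Consequently
$$\mathrm{Hilb}(D^p(\cA),x)\;=\;\frac{e_p(x^{d_1},\ldots,x^{d_n})}{(1-x)^n},$$
where $e_p$ denotes the $p$-th elementary symmetric polynomial.

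Second, I would plug these Hilbert series into Solomon--Terao. Using the identity
$$\sum_{p=0}^n e_p(y_1,\ldots,y_n)\,z^p \;=\; \prod_{i=1}^n (1+y_i z)$$
specialized at $y_i=x^{d_i}$, the alternating sum collapses to a single product $\prod_i(1+x^{d_i}z)$ for the appropriate value of $z$ dictated by the Solomon--Terao specialization. A short $x\to 1$ limit (one L'H\^opital step per factor, which converts $(1-x^{d_i})/(1-x)$ into $d_i$) then produces $\prod_{i=1}^n(t-d_i)$.

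The main obstacle is not any deep algebra but the careful bookkeeping of signs and of the $x\to 1$ limit in the Solomon--Terao specialization. Since Solomon--Terao is a standard tool, I would cite it rather than reprove it and focus the write-up on the symmetric-function identity and the limit calculation. A more elementary alternative is an induction on $|\cA|$ via the addition--deletion theorem (Theorem \ref{ad}), but this is fragile because a general free arrangement need not admit a sequence of addition or deletion steps reaching the empty arrangement; one would then have to thread the induction through auxiliary free arrangements, which is considerably messier than the Solomon--Terao route.
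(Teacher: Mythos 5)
The paper does not prove this theorem; it is quoted verbatim from Terao's 1981 paper \cite{T2} as a known input, so there is no in-paper argument to compare against. That said, your Solomon--Terao route is correct and is in fact the standard modern proof (it is essentially Theorem~4.137 of Orlik--Terao \cite{OT}): once one knows $\mathrm{Poin}(D^p(\cA),x)=e_p(x^{d_1},\ldots,x^{d_n})/(1-x)^n$ in the free case, the alternating sum in the Solomon--Terao formula collapses to $\prod_i\bigl(1+x^{d_i}(t(x-1)-1)\bigr)/(1-x)^n$, each factor simplifies to $(1-x^{d_i})/(1-x)-tx^{d_i}$, and the $x\to1$ limit gives $\prod_i(d_i-t)$, hence the claimed factorization of $\chi(\cA,t)$ after accounting for the $(-1)^n$ normalization. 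One small imprecision worth fixing: the freeness of $D^p(\cA)$ with basis $\{\theta_{i_1}\wedge\cdots\wedge\theta_{i_p}\}$ is not a consequence of Saito's criterion (Theorem~\ref{saito}); the substantive ingredient is the isomorphism $D^p(\cA)\cong\bigwedge^p D(\cA)$ for free $\cA$ (Orlik--Terao Theorem~4.75), and once you have that isomorphism the basis and Hilbert series are automatic, with Saito playing no further role. Also note that Terao's original 1981 proof predates the Solomon--Terao formula and proceeds differently, so your proof is the textbook modernization rather than a reconstruction of the historical argument; and your caution about the addition--deletion alternative is well placed --- it genuinely fails for general free arrangements, as the very existence of non-recursively-free arrangements (the subject of this paper) makes clear.
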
 
Theorem \ref{factorization} implies that the algebraic structure of 
an arrangement might control its combinatorics and its topology. 
However, the converse is not true in general. For example, there 
are non-free arrangements in $\bC^3$ 
whose characteristic polynomials factorize over 
the ring of integers (see \cite{OT}). Hence it is natural to ask 
how much the algebraic structure and the combinatorics of arrangements 
are related. The most important problem 
in this context is Terao's open problem \ref{probter}, which is 
open even when $n=3$. In \cite{T}, 
Terao introduced a nice family of 
free arrangements in which the open problem \ref{probter} is true. 
To state it, let us introduce the following 
key theorem in this paper. 
\begin{thm}[Addition-Deletion, \cite{T}]\label{ad} 
Let $\cA$ be a free arrangement in $\bC^3$ with $\exp(\cA)=(1,d_1,d_2)$. 
\item[(1)] 
$($the addition theorem$)$. Let $H \not \in \cA$ be a linear plane. 
Then $\cB=\cA \cup\{H\}$ is free with $\exp(1,d_1,d_2+1)$ if and only if 
$|\cA \cap H|=1+d_1$. 
\item[(2)] 
$($the deletion theorem$)$. Let $H \in \cA$. 
Then $\cA'=\cA \setminus \{H\}$ is free with $\exp(1,d_1,d_2-1)$ if and only if 
$|\cA' \cap H|=1+d_1$. 
\end{thm}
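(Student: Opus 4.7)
The plan is to prove Theorem \ref{ad} by combining Saito's criterion (Theorem \ref{saito}) with the Factorization Theorem (Theorem \ref{factorization}) and the deletion--restriction identity $\chi(\cB,t)=\chi(\cA,t)-\chi(\cA\cap H,t)$ for characteristic polynomials.

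For the \emph{only if} parts of both (1) and (2) I would argue in a unified way. Suppose both arrangements of either pair---$\cA$ and $\cB=\cA\cup\{H\}$ in (1), or $\cA$ and $\cA'=\cA\setminus\{H\}$ in (2)---are free with the stated exponents. By Factorization, both characteristic polynomials split completely, so substituting into the deletion--restriction identity and cancelling the common factor $(t-1)(t-d_1)$ yields $\chi(\cA\cap H,t)=(t-1)(t-d_1)$. Since $\cA\cap H$ is a central line arrangement in the plane $H\cong\bC^2$ (which is automatically free), the coefficient of $t^{n-1}$ in its characteristic polynomial is $-|\cA\cap H|$, so $|\cA\cap H|=1+d_1$.

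For the \emph{if} part of (1), let $\theta_E,\theta_1,\theta_2$ be a homogeneous basis of $D(\cA)$ of degrees $1,d_1,d_2$. The crux is to produce a derivation $\theta_1'\in D(\cA)$, still of degree $d_1$ and still forming a basis together with $\theta_E$ and $\theta_2$, that satisfies $\theta_1'(\alpha_H)\in\alpha_H S$, i.e.\ $\theta_1'\in D(\cB)$. To this end I would introduce the restriction map $\rho\colon D(\cA)\to D(\cA\cap H)$ defined by reducing coefficients modulo $\alpha_H$, verify that it is well defined by checking that each condition $\theta(\alpha_{H'})\in\alpha_{H'}S$ descends to the corresponding condition on the line $H'\cap H$ of $\cA\cap H$, and use the hypothesis $|\cA\cap H|=1+d_1$ to conclude that $\cA\cap H$ is free with exponents $(1,d_1)$. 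Writing $\rho(\theta_1)$ as a $\bC[H]$-linear combination $p\cdot\overline{\theta_E}+q\cdot\eta$ of a basis of $D(\cA\cap H)$ with $\deg\eta=d_1$, the coefficient $q$ must be a scalar; subtracting a scalar multiple of any lift $\tilde\eta\in D(\cA)$ of $\eta$ in degree $d_1$ yields $\theta_1'$ with $\rho(\theta_1')\in\bC[H]\cdot\overline{\theta_E}$, which, combined with Euler's identity $\theta_E(\alpha_H)=\alpha_H$, translates into $\theta_1'(\alpha_H)\in\alpha_H S$. Granting this, the triple $\theta_E,\theta_1',\alpha_H\theta_2$ lies in $D(\cB)$, is homogeneous with degrees summing to $|\cB|=|\cA|+1$, and has Saito determinant equal to $\alpha_H$ times the Saito determinant of $\theta_E,\theta_1',\theta_2$, hence proportional to $\prod_{H''\in\cB}\alpha_{H''}$; Saito's criterion then certifies freeness of $\cB$ with exponents $(1,d_1,d_2+1)$. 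The \emph{if} direction of (2) is handled symmetrically: from freeness of $\cA$ together with $|\cA'\cap H|=1+d_1$, one extracts from a basis of $D(\cA)$ an element divisible by $\alpha_H$ and divides through to obtain a basis of $D(\cA')$ of the correct degrees, again verified by Saito.

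The main obstacle is the lifting step inside the lemma: one must know that $\eta\in D(\cA\cap H)$ actually lies in the image of $\rho$ in degree $d_1$. This amounts to a surjectivity (in a single degree) statement for the restriction map, whose proof relies in an essential way on the freeness of $\cA$ and on the precise numerical equality $|\cA\cap H|=1+d_1$; once it is established, the rest of the argument is degree bookkeeping together with two applications of Saito's criterion.
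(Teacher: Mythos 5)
The paper cites Theorem~\ref{ad} from Terao~\cite{T} without proof, so there is no internal argument to compare with; I am assessing your sketch on its own. The ``only if'' direction is fine: deletion--restriction and Factorization give $\chi(\cA\cap H,t)=(t-1)(t-d_1)$, and since a central line arrangement with $m$ lines in $\bC^2$ has characteristic polynomial $(t-1)(t-(m-1))$, you obtain $|\cA\cap H|=1+d_1$.

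The ``if'' direction has a gap that precedes, and is in fact equivalent to, the lifting issue you flag at the end. The restriction map you introduce, $\rho\colon D(\cA)\to D(\cA\cap H)$, is not well defined on all of $D(\cA)$: it is only defined on the submodule $D(\cA\cup\{H\})=\{\theta\in D(\cA)\mid\theta(\alpha_H)\in\alpha_H S\}$. To see the obstruction, put $H=\{x_1=0\}$ and write $\theta=f_1\,\partial/\partial x_1+f_2\,\partial/\partial x_2+f_3\,\partial/\partial x_3\in D(\cA)$; the only candidate for a restriction is the derivation $\bar f_2\,\partial/\partial x_2+\bar f_3\,\partial/\partial x_3$ of $\bar S=\bC[x_2,x_3]$, where the bar denotes reduction mod $x_1$. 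For $H'\in\cA$ with $\alpha_{H'}=ax_1+bx_2+cx_3$, the hypothesis $\theta(\alpha_{H'})\in\alpha_{H'}S$ reduces to $a\bar f_1+b\bar f_2+c\bar f_3\in(bx_2+cx_3)\bar S$, which yields the needed membership $b\bar f_2+c\bar f_3\in(bx_2+cx_3)\bar S$ only when the cross-term $a\bar f_1$ also lies in that ideal --- automatic when $\bar f_1=0$, i.e.\ when $\theta$ is already tangent to $H$, but false in general. Consequently the decomposition $\rho(\theta_1)=p\bar\theta_E+q\eta$ is not available for a generic basis element $\theta_1$ of $D(\cA)$. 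Indeed, the entire nontrivial content of the addition theorem is to produce a degree-$d_1$ derivation in $D(\cA)$ that is \emph{additionally} tangent to $H$, and your argument posits such an element in the guise of the lift $\tilde\eta$ rather than constructing it. The standard proof (Terao~\cite{T}; Orlik--Terao~\cite{OT}, Thm.~4.50) works instead with the exact sequence $0\to D(\cA)\xrightarrow{\;\alpha_H\cdot\;}D(\cB)\xrightarrow{\;\rho\;}D(\cA\cap H)$, with $\rho$ defined on $D(\cB)$, and the real work is to establish the partial surjectivity of this $\rho$ from the given numerical data. The same difficulty reappears in the ``symmetric'' deletion direction, where the division by $\alpha_H$ again requires first exhibiting a basis element of $D(\cA)$ that vanishes along $H$.
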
 
\begin{defn}\label{IR} 
\item[(1)] 
A central plane arrangement $\cA$ 
is {\it inductively free} 
if there is a filtration of subarrangements 
$\cA_1 \subset \cA_2 \subset \cdots \subset \cA_\ell=\cA$ such that 
$|\cA_i|=i\ (1\leq i\leq\ell)$ and every $\cA_i$ is free. 
\item[(2)] 
A central plane arrangement $\cA$ is {\it recursively free} 
if there is a sequence of arrangements 
$\emptyset=\cB_0,\cB_1,\cB_2,\ldots,\cB_t=\cA$ 
such that $|\cB_i\bigtriangleup\cB_{i+1}|=1\ (0\leq i\leq t-1)$ and 
every $\cB_i$ is free. 
\end{defn} 
Roughly speaking, an inductively free arrangement is 
a free arrangement constructed from an empty arrangement by 
using only the addition theorem, and 
a recursively free arrangement is a free arrangement constructed 
from an empty arrangement by using both 
the addition and deletion theorems. 
It is known that in the category of inductively free 
arrangements, the open problem \ref{probter} is true, but open in that of 
recursively free arrangements. 
\begin{rem} 
Theorem \ref{ad} and Definition \ref{IR} are different from 
those in an arbitrary dimensional case. They coincide when $n=3$ 
because every central arrangement in $\bC^2$ is free. 
For a general definition, see \cite{T} and \cite{OT} for example. 
\end{rem} 
\end{subsection} 
 
\begin{subsection}{Properties of free line arrangements} 
In this subsection, we 
introduce several special properties on free line arrangements which will be used 
from \S 3 to \S 6. 
\begin{defn} 
For $\ell\in\bZ_{\geq0}$, we define the sets $\cF_\ell$, 
$\cI_\ell$ and $\cR_\ell$ as follows. 
\begin{eqnarray*} 
\cF_{\ell}&=&\left\{ 
\text{free arrangement}\ \cA\ \text{in}\ \bC^3 
\ \text{with }\ |\cA|=\ell\right\}, 
\\ 
\cI_{\ell}&=&\left\{\cA\in\cF_\ell\mid 
\cA\colon\text{inductively free} 
\right\}, 
\\ 
\cR_{\ell}&=&\left\{\cA\in\cF_\ell\mid 
\cA\colon\text{recursively free} 
\right\}. 
\end{eqnarray*} 
Note that $\cI_{\ell}\subset\cR_{\ell}\subset\cF_{\ell}$ 
by definition. 
\end{defn} 
For the rest of this section, we will concentrate on 
central arrangements in $\bC^3$. 
One of the main purposes of this paper is to clarify the difference 
between $\cI_{\ell}$ and $\cF_{\ell}$ for $0\leq\ell\leq12$. 
Since 
$\cI_{\ell}=\cF_{\ell}$ for $\ell\leq1$, 
we may assume $\ell\geq2$. 
For $H\in\cA$, we denote 
$ 
n_{\cA,H}=\left|\cA\cap H\right| 
$. 
The following is 
the foundation stone of our analysis in this paper. 
\begin{thm}[\cite{A}]\label{ABT} 
Assume $\chi(\cA,t)=(t-1)(t-a)(t-b)$ for 
$a,b\in\bR$ 
and that there exists $H\in\cA$ such that $n_{\cA,H}>\min(a,b)$. 
Then, $\cA$ is free if and only if $n_{\cA,H}\in\{a+1,b+1\}$. 
\end{thm}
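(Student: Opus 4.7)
The plan is to invoke Yoshinaga's freeness criterion: for any fixed $H\in\cA$, the arrangement $\cA\subset\bC^3$ is free with exponents $(1,a,b)$ if and only if the Ziegler multirestriction $(\cA^H,m^H)$ on $H\cong\bC^2$ is free with exponents $(a,b)$ and $\chi(\cA,t)=(t-1)(t-a)(t-b)$. Here $\cA^H:=\cA\cap H$ carries the multiplicity $m^H(X):=|\{H'\in\cA\setminus\{H\}:H'\cap H=X\}|$. Since multiarrangements in $\bC^2$ are automatically free with some exponents $(e_1,e_2)$ satisfying $e_1+e_2=|m^H|=|\cA|-1=a+b$, our hypothesis on $\chi(\cA,t)$ reduces the theorem to controlling $(e_1,e_2)$ in terms of $k:=n_{\cA,H}$.

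The key technical tool is the inclusion $D(\cA^H,m^H)\subseteq D(\cA^H)=S'\theta_E\oplus S'\eta$, where $\{\theta_E,\eta\}$ is a free basis of the simple arrangement $\cA^H$ with $\deg\eta=k-1$. Writing a homogeneous derivation $\theta$ of degree $d$ as $f\theta_E+g\eta$, one has $\deg f=d-1$ and $\deg g=d-(k-1)$, so when $d<k-1$ the coefficient $g$ must vanish. In that case $\theta=f\theta_E$, and the conditions $\theta(\alpha_X)\in(\alpha_X^{m^H(X)})$ together with pairwise coprimality of the $\alpha_X$ in $S'$ force $f\in(\prod_X\alpha_X^{m^H(X)-1})$, hence $d\geq|m^H|-k+1=a+b-k+1$.

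For the direction $(\Rightarrow)$, assume WLOG $a\leq b$, so by Yoshinaga $(\cA^H,m^H)$ has exponents $(a,b)$. If $k\geq a+2$ the degree-$a$ basis element is a pure $\theta_E$-multiple, so $a\geq a+b-k+1$, i.e., $k\geq b+1$. If in addition $k\geq b+2$, the degree-$b$ basis element is also a pure $\theta_E$-multiple, and both basis elements lie in the rank-one submodule $S'\theta_E$, contradicting their $S'$-independence. Hence $k\leq b+1$, and combining these yields $k\in\{a+1,b+1\}$.

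For the direction $(\Leftarrow)$, assume WLOG $k=a+1$. The same divisibility analysis, applied to a hypothetical derivation of degree $d<a$, yields $d\geq b\geq a$, a contradiction; so $e_1\geq a$. Combined with $e_1+e_2=a+b$ and $e_1\leq e_2$, this forces $(e_1,e_2)=(a,b)$ when $a=b$; for $a<b$ one must separately produce a genuine degree-$a$ derivation in $D(\cA^H,m^H)$ to force $e_1\leq a$. Up to Euler cosets, this reduces to solving a Chinese-remainder-style system $f\equiv-h_X\pmod{\alpha_X^{m^H(X)-1}}$ for every $X\in\cA^H$ with $\deg f\leq a-1$, where $h_X$ is defined by $\eta(\alpha_X)=\alpha_X h_X$. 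The main obstacle is the solvability of this system in the restricted degree range, and this is where the hypothesis $\chi(\cA,t)=(t-1)(t-a)(t-b)$ must enter, either through a M\"{o}bius-function dimension count forcing compatibility of the residues $(-h_X)_X$, or alternatively through an inductive argument on $|\cA|$ that first shows $\cA\setminus\{H\}$ is free with exponents $(1,a,b-1)$ via $\chi(\cA\setminus\{H\},t)=(t-1)(t-a)(t-b+1)$ and then invokes the addition part of Theorem~\ref{ad}.
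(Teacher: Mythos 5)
The paper itself does not prove this theorem; it cites it from Abe's paper~\cite{A} (arXiv:1302.3822). So there is no in-paper proof to compare against. Your route through Yoshinaga's criterion (Proposition~\ref{yoshinaga}) and the containment $D(\cA^H,m^H)\subseteq D(\cA^H)=S'\theta_E\oplus S'\eta$ is the natural one and almost certainly the right framework; your forward implication is complete and correct. However, the converse has a genuine gap, which you acknowledge but do not close, and it is worth being precise about where it sits.

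First, a structural point: with the normalization $a\leq b$, the phrase ``WLOG $k=a+1$'' is not a WLOG when $a<b$ --- the two cases $k=a+1$ and $k=b+1$ are genuinely different. In fact you pick the hard one. When $k=b+1$ the converse is immediate by exactly the divisibility analysis you set up: writing $P=\prod_X\alpha_X^{m^H(X)-1}$, one has $\deg P=(a+b)-k=a-1$, and $P\theta_E$ is a nonzero element of $D(\cA^H,m^H)$ of degree $a$; together with your lower bound $e_1\geq a$ this forces $(e_1,e_2)=(a,b)$ and Yoshinaga's criterion gives freeness. You should state this case explicitly rather than sweeping it under a WLOG.

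When $k=a+1$ and $a<b$, the real work is exactly where you say it is: one must exhibit a degree-$a$ element of $D(\cA^H,m^H)$, i.e., solve $f\equiv -h_X\pmod{\alpha_X^{m^H(X)-1}}$ with $\deg f\leq a-1$, while the CRT modulus $\prod_X\alpha_X^{m^H(X)-1}$ has degree $b-1>a-1$, so the unique residue has no a priori reason to have degree $\leq a-1$. Note also that the bound $e_1\geq a$ you derive in this case is already a consequence of Yoshinaga's inequality $d_1d_2\geq b_2^0(\cA)=ab$ (given $d_1+d_2=a+b$), so it adds nothing; the content is entirely in the missing upper bound. Your first suggested fix (``M\"obius-function dimension count'') is not worked out. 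Your second suggested fix --- show $\cA\setminus\{H\}$ is free with exponents $(1,a,b-1)$ and then apply the addition theorem --- is circular as stated: the deletion--restriction computation $\chi(\cA\setminus\{H\},t)=(t-1)(t-a)(t-b+1)$ is correct, but knowing the characteristic polynomial factors does not give freeness of $\cA\setminus\{H\}$; that inference is essentially Terao's Problem~\ref{probter}. Nor does the obvious induction close, since $\cA\setminus\{H\}$ need not contain a hyperplane satisfying the theorem's hypothesis. So the case $k=a+1$, $a<b$, remains unproven in your write-up and requires a substantive additional argument.
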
 
\begin{defn} 
In view of Theorem \ref{ABT}, we define 
the subset $\cS_{\ell}$ of $\cF_{\ell}$ as 
$$\cS_{\ell}=\{\cA\in\cF_\ell 
\mid \exp(\cA)=(1,a,b),\ \max_{H\in\cA}n_{\cA,H}\leq\min(a,b)\}. 
$$ 
Note that $\cS_{\ell}\cap\cI_{\ell}=\emptyset$ by 
Theorems \ref{ad} and \ref{ABT}. 
\end{defn} 
\begin{lem}\label{Red} 
If $\cF_{\ell-1}=\cI_{\ell-1}$ 
(resp.~$\cR_{\ell-1}$), 
then 
$\cF_{\ell}=\cS_{\ell}\sqcup \cI_{\ell}$ 
(resp.~$\cS_{\ell}\cup \cR_{\ell}$). 
\end{lem}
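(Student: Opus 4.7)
The plan is to fix $\cA\in\cF_\ell$ and show that if $\cA\notin\cS_\ell$ then we can delete a suitable hyperplane to land in $\cF_{\ell-1}$, so that the hypothesis on $\cF_{\ell-1}$ produces either an inductively or a recursively free reduction that we can then extend back to $\cA$. The disjointness assertion $\cS_\ell\sqcup\cI_\ell$ is the easy half: it is already recorded after the definition of $\cS_\ell$ that $\cS_\ell\cap\cI_\ell=\emptyset$ (a filtration witnessing $\cA\in\cI_\ell$ ends with an application of Theorem \ref{ad}(1), forcing some $H\in\cA$ with $n_{\cA,H}=1+d_1>\min(a,b)$).

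Assume therefore that $\cA\notin\cS_\ell$, write $\exp(\cA)=(1,a,b)$, and pick $H\in\cA$ with $n_{\cA,H}>\min(a,b)$. By the factorization theorem, $\chi(\cA,t)=(t-1)(t-a)(t-b)$, so Theorem \ref{ABT} forces $n_{\cA,H}\in\{a+1,b+1\}$; without loss of generality $n_{\cA,H}=a+1$. Note that since $H\notin\cA':=\cA\setminus\{H\}$, the identity $\cA'\cap H=\cA\cap H$ holds, hence $|\cA'\cap H|=a+1=1+a$. Applying the deletion part of Theorem \ref{ad} to $H\in\cA$ with $d_1=a$, $d_2=b$, we conclude that $\cA'$ is free with $\exp(\cA')=(1,a,b-1)$; in particular $\cA'\in\cF_{\ell-1}$.

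Now we invoke the hypothesis. In the first scenario, $\cF_{\ell-1}=\cI_{\ell-1}$ gives a free filtration $\cA_1\subset\cdots\subset\cA_{\ell-1}=\cA'$ with $|\cA_i|=i$; appending $\cA_\ell:=\cA$ (free by assumption) produces the required filtration for $\cA$, so $\cA\in\cI_\ell$. In the second scenario, $\cF_{\ell-1}=\cR_{\ell-1}$ gives a sequence $\emptyset=\cB_0,\ldots,\cB_t=\cA'$ of free arrangements with $|\cB_i\triangle\cB_{i+1}|=1$; appending $\cB_{t+1}:=\cA=\cA'\cup\{H\}$ shows $\cA\in\cR_\ell$. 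Combining both scenarios with the trivial inclusion $\cS_\ell\subset\cF_\ell$, we obtain $\cF_\ell=\cS_\ell\sqcup\cI_\ell$ in the inductive case and $\cF_\ell=\cS_\ell\cup\cR_\ell$ in the recursive case.

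There is essentially no obstacle here beyond the identification $|\cA'\cap H|=|\cA\cap H|$, which allows Theorem \ref{ABT} (telling us $n_{\cA,H}$ is exactly one of $a+1,b+1$) to be fed directly into the deletion half of Theorem \ref{ad}; once this is observed the proof is a single deletion step followed by re-use of the hypothesis, and no further combinatorial input is required.
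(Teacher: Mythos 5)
Your proof is correct and follows essentially the same route as the paper's: take $\cA\in\cF_\ell\setminus\cS_\ell$, extract a hyperplane $H$ with $n_{\cA,H}>\min(a,b)$, invoke Theorem \ref{ABT} together with the factorization theorem to pin down $n_{\cA,H}\in\{a+1,b+1\}$, delete $H$ via Theorem \ref{ad}(2) to land in $\cF_{\ell-1}$, and then invoke the hypothesis. You spell out a few details the paper leaves implicit (the identification $\cA'\cap H=\cA\cap H$, why $\cS_\ell\cap\cI_\ell=\emptyset$, and how to append the final step to the filtration/sequence), but the argument is the same.
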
 
\begin{proof} 
Take $\cA\in\cF_{\ell}\setminus\cS_{\ell}$ 
and set $\exp(\cA)=(1,a,b)$. 
By Theorem \ref{ABT}, 
$\cA\in\cF_{\ell}\setminus\cS_{\ell}$ 
if and only if there exists 
a line $H\in\cA$ such that $n_{\cA,H}=a+1$ or $b+1$. 
Therefore, by the deletion theorem, 
we have $\cA\setminus\{H\}\in \cF_{\ell-1}$. 
Now the assertions above are clear. 
\end{proof} 
In the rest of this paper, 
we regard a central arrangement in $\bC^3$ 
as a line arrangement in $\bP_{\bC}^2$. 
For a line arrangement $\cA$ in $\bP_{\bC}^2$ 
and $P\in\bP_{\bC}^2$, we set 
$$ 
\cA_P=\{H\in\cA\mid P\in H\},\ 
\mu_P(\cA)=\left|\cA_P\right|-1,\ 
\mu_{\cA}=\sum_{P\in\bP_{\bC}^2}\mu_P(\cA). 
$$ 
Note that $\mu_P(\cA)$ is a reformulation of the 
M\"obius function for $L_2(\cA)$. 
If $\cA\neq\emptyset$, we can express $\chi({\cA},t)$ 
as follows by definition. 
$$\chi({\cA},t)=(t-1)\left\{ 
t^2-(|\cA|-1)(t+1) 
+\mu_{\cA}\right\}.$$ 
Concerning the set $\cS_{\ell}$, we have the following lemma. 
\begin{lem}\label{a-2} 
Let $\cA\in\cF_{\ell}$ with $\ell\geq2$ and 
$\exp(\cA)=(1,a,b)$. 
Assume $\mu_P(\cA)\geq\min(a,b)-1$ for some $P\in\bP_{\bC}^2$. 
Then we have $\cA\not\in\cS_{\ell}$.  In particular, 
$\cS_{\ell}=\emptyset$ for $2\leq\ell\leq6$. 
\end{lem}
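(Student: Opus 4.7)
The plan is to argue by contraposition: I assume $\cA\in\cS_\ell$ and that some $P\in\bP_\bC^2$ satisfies $\mu_P(\cA)\geq m-1$ with $m:=\min(a,b)$, and derive a contradiction. I first dispose of the degenerate case $\cA_P=\cA$ (a pencil of planes through a common line in $\bC^3$): here $D(\cA)$ splits off a rank-one summand generated by the derivation along the common line, forcing $0\in\exp(\cA)$ and hence $m=0$; but every $H\in\cA$ satisfies $n_{\cA,H}=1$, contradicting $\cA\in\cS_\ell$ which would require $1\leq m$.

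So I may pick $H\in\cA\setminus\cA_P$. Because the $|\cA_P|$ lines through $P$ meet pairwise only at $P$, and $P\notin H$, they intersect $H$ in $|\cA_P|$ distinct points; hence $n_{\cA,H}\geq|\cA_P|\geq m$. Combined with the $\cA\in\cS_\ell$ bound $n_{\cA,H}\leq m$, this forces $|\cA_P|=m$ and $n_{\cA,H}=m$, and the set of intersection points of $H$ with $\cA\setminus\{H\}$ is exactly $\{L\cap H : L\in\cA_P\}$. Applying this to every $H\in\cA\setminus\cA_P$ shows that any two non-pencil lines intersect on some $L\in\cA_P$; thus every point of $L_2(\cA)$ other than $P$ lies on a unique $L\in\cA_P$.

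The decisive step is to count $\mu_\cA$ in two independent ways. From $\chi(\cA,t)=(t-1)\{t^2-(\ell-1)(t+1)+\mu_\cA\}=(t-1)(t-a)(t-b)$ one obtains $\mu_\cA=ab+a+b=m(\ell-1-m)+(\ell-1)$. Directly: $P$ contributes $m-1$, and for each $L\in\cA_P$ the points on $L\setminus\{P\}$ collectively absorb the $\ell-m$ non-pencil lines through $L$, so $\sum_{Q\in L\setminus\{P\}}\mu_Q=\ell-m$; summing over the $m$ pencil lines (disjoint away from $P$) gives $\mu_\cA=(m-1)+m(\ell-m)$. Equating the two expressions simplifies to $\ell=2m$, contradicting $\ell-1=a+b\geq 2m$. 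The main obstacle I anticipate is making the combinatorial description tight enough that the direct count captures every point of $L_2(\cA)$ exactly once; once the ``any two non-pencil lines meet on $\cA_P$'' structure is formalised this is routine. For the ``in particular'' clause, $2\leq\ell\leq 6$ forces $m\leq 2$, and any intersection point (guaranteed by $\ell\geq 2$) has $\mu_P\geq 1\geq m-1$, so the main statement immediately yields $\cS_\ell=\emptyset$.
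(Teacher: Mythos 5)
Your proof is correct, and it follows the same structural spine as the paper's: both start by disposing of the pencil case $\cA_P=\cA$, then observe that $\cA\in\cS_\ell$ together with $|\cA_P|\geq m$ forces $|\cA_P|=m$ and that every intersection point of two non-pencil lines lies on some line of $\cA_P$. The only divergence is in how the contradiction is then extracted from this structure: the paper recognizes the configuration as supersolvable and reads off the exponents $\exp(\cA)=(1,a-1,\ell-a)$ from the modular chain, which conflicts with $a=\min(a,b)$; you instead compute $\mu_\cA$ directly by summing over the pencil lines and compare against $\mu_\cA=ab+\ell-1$ from $\chi(\cA,t)$, arriving at $\ell=2m$, which contradicts $\ell-1=a+b\geq 2m$. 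Your Möbius count is essentially an unwound proof of the supersolvable exponent formula in this special case, so it is slightly more self-contained but not a genuinely different route. Both arguments are sound, and your handling of the ``in particular'' clause matches the paper's.
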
 
\begin{proof} 
We assume $a\leq b$ and $\cA\in\cS_{\ell}$. 
Let $P_0$ be a point in $\bP_{\bC}^2$. 
Suppose $\mu_{P_0}(\cA)\geq a$. 
If there exists $H\in \cA\setminus\cA_{P_0}$, we have 
$n_{\cA,H}\geq|\cA_{P_0}\cap H|\geq a+1$, which contradicts 
to $\cA\in\cS_{\ell}$. Therefore $\cA_{P_0}=\cA$, and it 
follows that $\exp(\cA)=(1,0,\ell-1)$. 
Thus $a=0$, but there exists $H\in\cA$ with $n_{\cA,H}\geq1$ 
since $\ell\geq2$, which contradicts to $\cA\in\cS_{\ell}$. 
Suppose $\mu_{P_0}(A)=a-1$. 
Since $|\cA_{P_0}|=a$ and $\cA\in\cS_{\ell}$, 
all intersection points of $\cA$ 
lie on $\bigcup_{H\in\cA_{P_0}}H$. 
It follows that $\cA$ is super solvable 
(see Definition 2.32 of \cite{OT} for the details) 
and $\exp(\cA)=(1,a-1,\ell-a)$, 
which contradicts to the condition on $a$. 
If $\ell\leq6$, the condition for the lemma automatically 
holds since 
$\mu_P(\cA)\geq1$ for some $P\in\bP_{\bC}^2$ and 
$a\leq\lfloor(\ell-1)/2\rfloor\leq2$. 
Therefore $\cS_{\ell}=\emptyset$. 
\end{proof} 
Now we introduce the invariant $F(\cA)$, which will be used 
to classify $\cS_\ell$. 
\begin{defn} 
Let $\cA$ be a line arrangement in $\bP_{\bC}^2$. We denote 
$$M_i(\cA)=\{P\in{\bP_{\bC}}^2\mid \mu_P(\cA)=i\}$$ 
and set the invariant $F(\cA)$ as 
$$ 
F(\cA)=[F_1(\cA),F_2(\cA),\ldots],\quad 
F_i(\cA)=\left|M_i(\cA)\right|\ (i=1,2,\ldots). 
$$ 
\end{defn} 
\begin{lem} 
The invariant $F(\cA)$ satisfies the following formulae. 
$$ 
\sum_{i} 
iF_i(\cA)=\mu_\cA 
,\ 
\sum_{i} 
(i+1)F_i(\cA)=\sum_{H\in\cA}n_{\cA,H} 
,\ 
\sum_{i} 
\binom{i+1}{2}F_i(\cA)=\binom{|\cA|}{2} 
. 
$$ 
\end{lem}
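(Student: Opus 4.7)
The plan is to recognize that all three formulae are elementary double-counting identities obtained by partitioning either the points of $L_2(\cA)$ according to their multiplicity, or counting incidences of points and lines in two ways. There is no real obstacle; the only care needed is to organize the bookkeeping so that each multiplicity value $i$ appears with the correct weight.

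First, for the identity $\sum_i i F_i(\cA)=\mu_\cA$, I would simply regroup the defining sum $\mu_\cA=\sum_{P\in\bP_\bC^2}\mu_P(\cA)$ according to the value of $\mu_P(\cA)$. Points with $\mu_P(\cA)=0$ or $-1$ contribute nothing, so the sum may be taken over $i\geq1$, and $P\in M_i(\cA)$ contributes $i$; there are $F_i(\cA)$ such points, giving the claim.

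Next, for $\sum_i(i+1)F_i(\cA)=\sum_{H\in\cA}n_{\cA,H}$, I would interpret both sides as counting flags $(P,H)$ with $P\in H$ and $\mu_P(\cA)\geq 1$ (equivalently $P\in L_2(\cA)\cap H$). For a fixed $H\in\cA$, the number of such $P$ equals $|\cA\cap H|=n_{\cA,H}$, which yields the right-hand side. Summing instead over $P$ first, each $P\in M_i(\cA)$ lies on exactly $|\cA_P|=\mu_P(\cA)+1=i+1$ members of $\cA$, giving the left-hand side.

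Finally, for $\sum_i\binom{i+1}{2}F_i(\cA)=\binom{|\cA|}{2}$, I would count unordered pairs of distinct lines $\{H,H'\}\subset\cA$ in two ways. On one hand there are $\binom{|\cA|}{2}$ such pairs. On the other hand, since we work in $\bP_\bC^2$, any two distinct lines meet in a unique point, so each pair is recorded at exactly one $P\in L_2(\cA)$, and the number of pairs meeting at $P$ is $\binom{|\cA_P|}{2}=\binom{\mu_P(\cA)+1}{2}$. Partitioning these points by their multiplicity $i$ then yields $\sum_i\binom{i+1}{2}F_i(\cA)$, completing the proof.
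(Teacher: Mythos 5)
Your proof is correct and follows essentially the same approach as the paper: the first identity is a regrouping of the defining sum for $\mu_\cA$ by multiplicity, the second is a flag count over incident pairs $(P,H)$, and the third is a double count of unordered pairs of lines via their unique intersection points. The paper states these same three observations more tersely, but the argument is identical.
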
 
\begin{proof} 
The left equation is clear by definition. 
Since $P\in M_i(\cA)$ is contained in $(i+1)$ lines of $\cA$, 
the middle equation holds. 
Finally, regarding all the intersection points of $\cA$ as 
the concentrations of the intersections of 2 lines of $\cA$, 
we have the right equation. 
\end{proof} 
Now we determine all the possibilities of $F(\cA)$ 
for $\cA\in\cS_\ell$ $(\ell\leq12)$. 
\begin{prop}\label{Classify} 
Let $\ell\in\bZ_{\leq12}$ and 
$\cA\in\cS_{\ell}$ with $\exp(\cA)=(1,a,b)$. 
Then we have 
$$(\ell,\min(a,b),F(\cA))\in\left\{ 
\begin{array}{ccc} 
(9, 4, [0, 12]), 
& 
(11, 5,[1, 14, 2]), 
& 
(11, 5,[4, 11, 3]), 
\\ 
(11, 5,[7, 8, 4]), 
& 
(11, 5,[10, 5, 5]), 
& 
(12, 5,[0, 16, 3]) 
\end{array} 
\right\}. 
$$ 
In particular, we have 
$\cS_{\ell}=\emptyset$, 
$\cF_{\ell}=\cI_{\ell}$ for $2\leq\ell\leq 8$ 
and $\cS_{10}=\emptyset$. 
\end{prop}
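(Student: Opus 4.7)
The proof is a finite case analysis driven by the linear constraints $F(\cA)$ must satisfy. Assume without loss of generality that $a \leq b$, so $\min(a,b) = a$ and $b = \ell - 1 - a$. By Lemma~\ref{a-2}, any $\cA \in \cS_\ell$ obeys $\mu_P(\cA) \leq a - 2$ for every $P$, hence $F_i(\cA) = 0$ for all $i \geq a - 1$; the only possibly nonzero entries of $F(\cA)$ are $F_1, \ldots, F_{a-2}$. Moreover, Theorem~\ref{factorization} yields $\chi(\cA, t) = (t-1)(t-a)(t-b)$, which compared with the identity $\chi(\cA, t) = (t-1)\{t^2 - (\ell-1)(t+1) + \mu_\cA\}$ (displayed earlier) forces $\mu_\cA = ab + a + b$. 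Together with the two other identities $\sum_i (i+1) F_i = \sum_{H\in\cA} n_{\cA,H}$ and $\sum_i \binom{i+1}{2} F_i = \binom{\ell}{2}$, and the bound $\sum_H n_{\cA, H} \leq a\ell$ coming from the defining property $n_{\cA,H}\leq a$ of $\cS_\ell$, we obtain a finite linear system to solve.

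I would then run a case analysis on $a \in \{2,3,4,5\}$ (the only possibilities, since $2a+1 \leq \ell \leq 12$). For $a \leq 2$, the bound $\mu_P \leq 0$ forbids any intersection point, contradicting $\ell \geq 2$. For $a = 3$, only $F_1$ remains and the resulting equations $F_1 = 4\ell - 13$ and $F_1 = \binom{\ell}{2}$ collapse to $\ell^2 - 9\ell + 26 = 0$, which has negative discriminant. For $a = 4$, the $2 \times 2$ system yields $F_2 = \tfrac{1}{2}(\ell^2 - 11\ell + 42)$ and $F_1 = -\ell^2 + 16\ell - 63$; nonnegativity of $F_1$ restricts to $\ell \in \{7,8,9\}$, while $a\leq b$ forces $\ell \geq 9$, pinning down $(\ell, F) = (9, [0,12])$. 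For $a = 5$, parametrize by $k = F_3$ to get $F_2 = \tfrac{1}{2}(\ell^2 - 13\ell + 62) - 3k$ and $F_1 = -\ell^2 + 19\ell - 93 + 3k$; nonnegativity combined with the upper bound $\sum_i (i+1) F_i \leq 5\ell$ cuts the possibilities down to $(\ell, k) \in \{(11,2),(11,3),(11,4),(11,5),(12,3)\}$, producing exactly the five listed tuples.

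The "in particular" assertions follow at once: $\cS_\ell = \emptyset$ for $\ell \in \{2,\ldots,8\}\cup\{10\}$ is immediate from the enumeration, and $\cF_\ell = \cI_\ell$ for $\ell \leq 8$ follows by induction on $\ell$ via Lemma~\ref{Red}, starting from the trivial identity $\cF_1 = \cI_1$. The only mildly involved part is the bookkeeping in the $a = 5$ case, where one must verify that the single additional upper-bound constraint correctly eliminates the value $k = 6$ for $\ell = 11$ and all $k \geq 4$ for $\ell = 12$; no step uses anything beyond elementary linear algebra and integer parity, so there is no serious obstacle to the argument.
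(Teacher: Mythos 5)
Your proof is correct and follows the same strategy as the paper's: combine Lemma~\ref{a-2} (which truncates $F(\cA)$ at index $a-2$), the three linear identities/inequalities from $\mu_\cA = ab+\ell-1$, the pair-count $\sum_i\binom{i+1}{2}F_i = \binom{\ell}{2}$, and the $\cS_\ell$-bound $\sum_H n_{\cA,H}\leq a\ell$, then exhaust the finitely many cases. The paper simply states ``solving above inequalities \ldots we obtain only 6 triplets'' while you carry out the $a=3,4,5$ algebra explicitly; your computations (the quadratic with negative discriminant at $a=3$, the collapse to $\ell=9$ at $a=4$, and the parametrization by $k=F_3$ at $a=5$ together with the cut $50+k\leq 55$ for $\ell=11$ and $57+k\leq 60$ for $\ell=12$) all check out, as do the ``in particular'' deductions via Lemma~\ref{Red}.
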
 
\begin{proof} 
Note that $b=\ell-1-a$. 
We may assume $a\leq(\ell-1)/2$. 
By Lemma \ref{a-2}, we may assume 
$\ell\geq7$ and $F_i=0$ for $i\geq a-1$. 
Since $\chi({\cA},t)=(t-1)(t-a)(t-b)$ 
by Theorem \ref{factorization}, we have 
$\mu_{\cA}=ab+\ell-1=(\ell-1)(a+1)-a^2$. 
Also, since $\cA\in\cS_{\ell}$, we have 
$\sum_{H\in\cA}n_{\cA,H}\leq a\ell$. 
Thus we have the inequalities as follows 
$$ 
\sum_{i=1}^{a-2}iF_i(\cA)=(\ell-1)(a+1)-a^2, 
\quad 
\sum_{i=1}^{a-2}(i+1)F_i(\cA)\leq a\ell, 
\quad 
\sum_{i=1}^{a-2} 
\binom{i+1}{2}F_i(\cA)=\binom{\ell}{2}. 
$$ 
Solving above inequalities under the condition 
$0\leq a\leq(\ell-1)/2$ and $7\leq\ell\leq12$, we obtain only 
6 triplets $[\ell,a,F]$ appearing in the right hand side 
of the statement. Now $\cS_\ell=\emptyset$ for $2\leq\ell\leq8$ 
and $\cS_{10}=\emptyset$ 
are clear.  By Lemma \ref{Red}, we have 
$\cF_\ell=\cI_\ell$ for $2\leq \ell\leq8$. 
\end{proof} 
\begin{defn} 
For $H\in\cA$ and $i\in\bZ_{>0}$, we set 
$\mu_{\cA,H}=\sum_{P\in H}\mu_P(\cA)$ and 
$$ 
M_{i}(H,\cA)=M_i(\cA)\cap H,\ 
F_{H,i}(\cA)=\left|M_{i}(H,\cA)\right|,\ 
F_H(\cA)=[F_{H,1}(\cA),F_{H,2}(\cA),\ldots]. 
$$ 
\end{defn} 
\begin{lem} 
For $H\in\cA$, the invariant $F_{H}(\cA)$ satisfies 
the following formulae. 
$$ 
\sum_iF_{H,i}(\cA)=n_{\cA,H},\ 
\sum_iiF_{H,i}(\cA)=\mu_{\cA,H}=|\cA|-1,\ 
\sum_{H\in\cA}F_{H,i}(\cA)=(i+1)F_i(\cA). 
$$ 
\end{lem}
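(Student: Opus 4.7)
The plan is to prove each of the three identities by a straightforward double-counting argument, using that we work in $\bP_{\bC}^2$ so any two distinct lines meet in exactly one point.

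For the first identity, note that $M_i(H,\cA)$ for $i\geq 1$ partitions the set of points on $H$ that lie on at least two lines of $\cA$, i.e., the points $H\cap H'$ for $H'\in\cA\setminus\{H\}$. Since the latter set has cardinality $n_{\cA,H}$ by definition, summing $F_{H,i}(\cA)=|M_i(H,\cA)|$ over $i$ gives $n_{\cA,H}$.

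For the second identity, the equality $\sum_i iF_{H,i}(\cA)=\mu_{\cA,H}$ is immediate from the definitions by regrouping points in $H$ according to the value $\mu_P(\cA)=i$. To see $\mu_{\cA,H}=|\cA|-1$, I would count pairs $(P,H')$ with $P\in H\cap H'$ and $H'\in\cA\setminus\{H\}$ in two ways: for each such $H'$ there is a unique such $P$ (since distinct projective lines meet in exactly one point), giving $|\cA|-1$; on the other hand, for each $P\in H$ the number of $H'$ is $\mu_P(\cA)$, yielding $\sum_{P\in H}\mu_P(\cA)=\mu_{\cA,H}$.

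For the third identity, I would again double-count, this time pairs $(P,H)$ with $P\in M_i(\cA)$ and $H\in\cA$ such that $P\in H$. Summing over $H$ first gives $\sum_{H\in\cA}F_{H,i}(\cA)$, while each $P\in M_i(\cA)$ lies on exactly $\mu_P(\cA)+1=i+1$ lines of $\cA$, yielding $(i+1)F_i(\cA)$. None of these steps presents a real obstacle; the whole lemma is a bookkeeping exercise in the incidence geometry of lines and points in $\bP_{\bC}^2$, and the only subtle point is invoking that two distinct projective lines always meet, which is essential for the identity $\mu_{\cA,H}=|\cA|-1$.
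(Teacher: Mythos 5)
Your proof is correct and is essentially the same as the paper's, which simply remarks that the formulae are "clear by definitions and the fact that $|\cA_P|=\mu_P(\cA)+1$." You have just spelled out the double-counting arguments that the paper leaves implicit, including the key observation that two distinct projective lines always meet.
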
 
\begin{proof} 
The formulae above are clear by definitions and the fact that 
$|\cA_P|=\mu_P(\cA)+1$. 
\end{proof} 
 
\medskip 
 
In sections from \S \ref{S9} to \S \ref{13}, 
we determine an arrangement $\cA\in\cS_{\ell}$ for $\ell\leq12$. 
Namely, we determine the lattice structures of $\cA$ 
up to the permutations $\fS_{\ell}$ of 
indices of hyperplanes and 
determine their realizations 
in $\bP_{\bC}^2$ up to the action of $\PGL(3,\bC)$. 
The hyperplanes of $\cA$ are denoted by $\cA=\{H_1,\ldots,H_\ell\}$, 
while the defining equation of each $H_i$ is denoted by $h_i$. 
The intersection points of $\cA$ 
satisfying $\cA_P=\{H_{a_i}\mid i\in I\}$ 
is denoted by $\{a_i\mid i\in I\}$. 
The line passing through $P$ and $Q$ is 
denoted by $\overline{PQ}$. 
For the coordinate calculation, 
we regard $\bP_{\bC}^2$ as 
the union of affine part ${\bC}^2$ and 
the infinity line $H_{\infty}$. 
 
\end{subsection} 
 
\begin{subsection}{Intersection lattices and moduli spaces} 
\begin{defn}[The moduli space of a graded lattice]\label{modulispace} 
Assume that $(L,\leq)$ is a graded lattice, i.e., all maximal chains in $L$ have the same length $r+1$ for some $r\in \bN$. This implies the existence of a map $\rk : L \rightarrow \{0,\ldots,r\}$ such that for fixed $k$ with $0\leq k\leq r$ the set $$L_k:=\{X\in L\mid \rk(X)=k\}$$ is an antichain. 
 
Now let $K$ be a field. 
Let $\Uc(L)$ be the set of arrangements $\cA$ with $|L_1|$ hyperplanes in $K^r$ such that there exists an isomorphism of posets 
$\varphi : L(\cA)\rightarrow L$ with $\varphi(\cA)=L_1$. 
It is well-known that $\Uc(L)$ is not an algebraic variety but still a locally closed set with respect to the Zariski topology. 
For our purpose, the right object to study is the \emph{moduli space} $\Vc(L)$ of $L$ which we define as 
\[ \Vc(L) := \Uc(L) / \PGL_r(K), \] 
where the group $\PGL_r(K)$ acts on the linear forms of the arrangements. 
In other words, $\Vc(L)$ is the locally closed set of ``all arrangements with intersection lattice $L$ up to projectivities''. 
 
If $L$ is a poset, we write $\Aut(L)$ for the set of automorphisms of posets, i.e., the set of bijections preserving the relations in the poset. 
\end{defn}

\end{subsection} 
\end{section} 
\begin{section}{Determination of $\cS_9$}\label{S9} 
In this section, we show that $\cS_9$ consists of 
dual Hesse arrangements. 
\begin{subsection}{Lattice structure of $\cA\in\cS_9$}\label{S9L} 
We determine the lattice of $\cA\in\cS_9$. 
By Proposition \ref{Classify}, we have $F(\cA)=[0,12]$. 
Note that $F_{H}(\cA)=[0,4]$ for any $H\in\cA$ 
since $F(\cA)=[0,12]$ and $\sum_iiF_{H,i}(\cA)=\ell-1=8$. 
Concerning $M_2(H_9,\cA)$, we may set 
$$\{1,2,9\},\{3,4,9\},\{5,6,9\},\{7,8,9\} 
\in M_2(\cA).$$ 
Since $H_1\cap H_3$ lies on 
$H_5$, $H_6$, $H_7$ or $H_8$, 
we may set 
$\{1,3,5\}\in M_2(\cA)$ by symmetry. 
Since $H_1\cap H_7\neq H_1\cap H_8$, 
they are other 2 points of $M_2(H_1,\cA)$. 
Thus we may set 
$\{1,4,7\},\{1,6,8\}\in M_2(H_1,\cA)$ by symmetry 
of $(3,5)(4,6)$. Namely, 
$$\{1,3,5\},\{1,4,7\},\{1,6,8\}\in M_2(\cA).$$ 
Investigating 
$M_2(H_3,\cA)$, 
$M_2(H_4,\cA)$ and 
$M_2(H_2,\cA)$, 
it is easy to see 
$$ 
\{2,3,8\},\{3,6,7\}, 
\{2,4,6\},\{4,5,8\}, 
\{2,5,7\} 
\in M_2(\cA). 
$$ 
Now we obtain all the points of $M_2(\cA)$, thus 
the lattice structure of $\cA$ is determined. 
\end{subsection} 
\begin{subsection}{Realization of $\cA\in\cS_9$}\label{S9R} 
We determine the realization of $\cA\in\cS_9$ in $\bP_{\bC}^2$. 
We may set $H_9$ as the infinity line $H_\infty$, 
$h_1=x$, $h_2=x-1$, $h_3=y$, $h_4=y-1$ and 
$$ 
\{1,6,8\}=(0,p), 
\{2,5,7\}=(1,q), 
\{3,6,7\}=(r,0), 
\{4,5,8\}=(s,1) 
\quad 
(p,q,r,s\neq0,1). 
$$ 
Note that 
$(0,0),(1,q),(s,1)\in H_5$, 
$(1,1),(0,p),(r,0)\in H_6$, 
$(0,1),(1,q),(r,0)\in H_7$ and 
$(1,0),(0,p),(s,1)\in H_8$. 
Therefore we have 
$$sq=(1-r)(1-p)=r(1-q)=p(1-s)=1.$$ 
Solving these equations, we have 
$$(p,q,r,s)=(-\omega^2,-\omega,-\omega,-\omega^2),$$ 
where $\omega$ is a primitive third root of unity, 
and $h_i$ for $5\leq i\leq 8$ as follows. 
$$ 
h_5=y+\omega x,\ 
h_6=y+\omega x+\omega^2,\ 
h_7=y-\omega^2 x-1,\ 
h_8=y-\omega^2 x+\omega^2. 
$$ 
By this construction, for the permutation 
$\sigma\in\fS_{9}^\ast=\left\{\sigma\in\fS_{9} 
\mid \sigma(L(\cA))=L(\cA)\right\}$ preserving the 
lattice, there exists a $\PGL(3,\bC)$-action 
sending each $H_i$ to $H_{\sigma(i)}$, 
or sending each $H_i$ to $\overline{H_{\sigma(i)}}$, 
where $\overline{H_{i}}$ stands for the Galois conjugate 
of $H_i$ by $\Gal(\bQ[\sqrt{-3}]/\bQ)$. 
Note also that $\cA$ is transferred to 
$\overline{\cA}$ 
by 
$\left[(x,y,z)\mapsto(y,x,z)\right]\in\PGL(3,\bC)$, 
which sends $H_i$ to $\overline{H_{\mu(i)}}$ 
where 
$\mu=(1,3)(2,4)(7,8)\in\fS_{9}^\ast$. 
Thus $\cA$ is realized uniquely up to the $\PGL(3,\bC)$-action. 
\end{subsection} 
\begin{subsection}{Verifying $\cA\in\cS_9\subset\cR_9$}\label{S9V} 
We check the freeness of $\cA$ realized in \S \ref{S9R} and 
show that $\cA\in\cR_9$. 
We set $\cA_1=\cA\cup\{H_{10}\}$ where $h_{10}=x-y$. 
Then we have 
$$\cA_1\cap H_{10}=\{ 
(0,0),\ 
(1,1),\ 
((1-\omega^2)^{-1},(1-\omega^2)^{-1}),\ 
((1-\omega)^{-1},(1-\omega)^{-1}),\ 
H_{10}\cap H_{\infty}\}. 
$$ 
Since $\mu_{\cA_1}=\mu_{\cA}+5$, we have 
$\chi({\cA_1},t)=(t-1)(t-4)(t-5)$. 
By Theorem \ref{ABT}, we have 
$\cA_1\in\cF_{10}$ with $\exp(\cA_1)=(1,4,5)$, 
and hence $\cA\in\cS_9$. 
We set $\cA_2=\cA_1\setminus\{H_{\infty}\}$ and 
$\cA_3=\cA_2\setminus\{H_7\}$. 
Since $n_{\cA_1,H_{\infty}}=5$, we have 
$\cA_2\in\cF_9$ with $\exp(\cA_2)=(1,4,4)$. 
Since $n_{\cA_2,H_7}=5$, we have 
$\cA_3\in\cF_8=\cI_8$. Therefore 
$\cA_2\in\cI_9$, $\cA_1\in\cI_{10}$ and 
$\cA\in\cR_9$. 
 
\medskip 
 
In fact, to check whether $\cA\in\cF_9$ belongs to $\cS_9$ or not, 
we have only to check $F(\cA)$. 
\begin{lem} 
If $\cA\in\cF_9$ satisfies $F(\cA)=[0,12]$, 
then $\cA\in\cS_9$. 
\end{lem}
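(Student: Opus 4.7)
The plan is to derive the exponents of $\cA$ and the common value of $n_{\cA,H}$ directly from $F(\cA)=[0,12]$, and check the defining inequality of $\cS_9$.

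First, I would compute $\chi(\cA,t)$ by means of the invariant identities recorded just before the proposition. From $F_1=0$, $F_2=12$ and $F_i=0$ for $i\geq 3$, the formula $\mu_\cA=\sum_i iF_i(\cA)$ gives $\mu_\cA=24$. Substituting $|\cA|=9$ and $\mu_\cA=24$ into
\[
\chi(\cA,t)=(t-1)\{t^2-(|\cA|-1)(t+1)+\mu_\cA\}
\]
yields $\chi(\cA,t)=(t-1)(t^2-8t+16)=(t-1)(t-4)^2$. Since $\cA$ is assumed free, Theorem \ref{factorization} forces $\exp(\cA)=(1,4,4)$, hence $\min(a,b)=4$.

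Next, I would bound $n_{\cA,H}$ uniformly for $H\in\cA$. The formulas for $F_H(\cA)$ give $\sum_i F_{H,i}(\cA)=n_{\cA,H}$ and $\sum_i iF_{H,i}(\cA)=|\cA|-1=8$. Because only $F_2$ is nonzero globally, only $F_{H,2}$ can be nonzero, so $n_{\cA,H}=F_{H,2}(\cA)$ and $2F_{H,2}(\cA)=8$, which forces $n_{\cA,H}=4$ for every $H\in\cA$.

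Combining the two computations gives $\max_{H\in\cA}n_{\cA,H}=4\leq 4=\min(a,b)$, which is exactly the condition defining $\cS_9$; therefore $\cA\in\cS_9$. There is essentially no obstacle: once $F(\cA)=[0,12]$ is fixed, everything is determined by the three invariant identities, and freeness enters only through the factorization theorem to pin down the exponents.
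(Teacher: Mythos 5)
Your proof is correct and takes essentially the same approach as the paper: the paper's (much terser) argument also deduces $n_{\cA,H}=4$ for every $H$ from $\mu_{\cA,H}=2n_{\cA,H}=8$, leaving implicit the computation $\chi(\cA,t)=(t-1)(t-4)^2$ and hence $\exp(\cA)=(1,4,4)$, which you carry out explicitly.
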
 
\begin{proof} 
For any $H\in\cA$, 
since $9-1=\mu_{\cA,H}=2n_{\cA,H}$, we have 
$n_{\cA,H}=4$. 
\end{proof} 
\begin{defn}\label{dH} 
An arrangement in $\bC^3$ 
is called a {\it dual Hesse} arrangement 
if it is $\PGL(3,\bC)$-equivalent to 
$(\varphi_{\dH}=0)$, where 
$$ 
\varphi_{\dH}= 
(x^3-y^3)(y^3-z^3)(z^3-x^3). 
$$ 
\end{defn} 
It is easy to see that 
$\cA=(\varphi_{\dH}=0)$ 
satisfies $F(\cA)=[0,12]$. 
Therefore, 
$$\cS_9=\{\text{dual Hesse arrangements}\}\subset\cR_9.$$ 
\end{subsection} 
\begin{subsection}{Addition to $\cA\in\cS_9$} 
The structures of $\cF_9$ and $\cF_{10}$ are given as below. 
\begin{prop}\label{9-10} 
\item[(1)] 
Let $H\in \cA_1\in\cF_{10}$ such that 
$\cA=\cA_1\setminus\{H\}\in\cS_9$. 
Then, $\cA_1\in\cI_{10}$ and $\cA_1$ is unique up to 
the $\PGL(3,\bC)$-action. 
\item[(2)] 
$\cF_{9}=\cR_{9}=\cI_9\sqcup\cS_9$ and 
$\cF_{10}=\cI_{10}$. 
\end{prop}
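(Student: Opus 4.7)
The plan splits cleanly between the two parts. For Part (1), I would apply the addition theorem (Theorem \ref{ad}): since $\exp(\cA)=(1,4,4)$, the arrangement $\cA_1=\cA\cup\{H\}$ is free with exponents $(1,4,5)$ if and only if $|\cA\cap H|=5$. Because $F(\cA)=[0,12]$, every multiple point of $\cA$ is a triple point, so for any $H\notin\cA$ passing through exactly $k$ triple points of $\cA$ one computes $|\cA\cap H|=9-2k$, which forces $k=2$. A double count then yields $\binom{12}{2}-9\binom{4}{2}=12$ candidate ``extra secants''; a short verification in the coordinates of \S\ref{S9R} rules out any line through three or more triple points that is not already in $\cA$.

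To conclude the uniqueness of $\cA_1$ up to $\PGL(3,\bC)$, I would use that by \S\ref{S9R} every element of $\fS_9^\ast$ is realized geometrically, so it suffices to show that $\fS_9^\ast$ acts transitively on the 12 extra secants. This reduces to a direct orbit computation in the $(12_3,9_4)$ Hessian incidence structure, feasible from the explicit list of triple points in \S\ref{S9L}. Once uniqueness is established, the specific $\cA_1$ constructed in \S\ref{S9V} with $h_{10}=x-y$, which was shown there to lie in $\cI_{10}$, implies that $\cA_1\in\cI_{10}$ for every admissible choice of $H$.

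For Part (2), Lemma \ref{Red} applied to $\ell=9$ (with $\cF_8=\cI_8$ from Proposition \ref{Classify}) gives $\cF_9=\cS_9\sqcup\cI_9$, and the inclusion $\cS_9\subset\cR_9$ established in \S\ref{S9V} upgrades this to $\cF_9=\cR_9=\cI_9\sqcup\cS_9$. For $\cF_{10}=\cI_{10}$, take any $\cA_1\in\cF_{10}$. Since $\cS_{10}=\emptyset$ by Proposition \ref{Classify}, the argument in the proof of Lemma \ref{Red} produces $H\in\cA_1$ with $\cA_1\setminus\{H\}\in\cF_9$. If $\cA_1\setminus\{H\}\in\cI_9$ the addition theorem gives $\cA_1\in\cI_{10}$ directly; if instead $\cA_1\setminus\{H\}\in\cS_9$, then Part (1) delivers the same conclusion. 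The main obstacle is the transitivity claim in Part (1): while the counting of the 12 extra secants is immediate, verifying transitivity of the Hessian group on them requires explicit orbit bookkeeping using the coordinates of \S\ref{S9R}.
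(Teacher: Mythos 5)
Your proposal follows essentially the same route as the paper: identify the twelve candidate secants $H$ through exactly two triple points of $\cA$, use the geometric realization of $\fS_9^\ast$ via $\PGL(3,\bC)$ together with Galois conjugation to reduce uniqueness of $\cA_1$ to transitivity of $\fS_9^\ast$ on these secants, and then transport the one explicit inductively free witness $h_{10}=x-y$ from \S\ref{S9V} to conclude $\cA_1\in\cI_{10}$ in all cases; part (2) is argued identically. The only cosmetic differences are that the paper lists the twelve secants rather than obtaining them from the count $\binom{12}{2}-9\binom{4}{2}=12$, and that it actually carries out the orbit bookkeeping you defer, by writing down explicit permutations in $\fS_9^\ast$ that link all twelve pairs.
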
 
\begin{proof} 
\item[(1)] 
We may assume $\cA$ has the description 
as in \S \ref{S9L} and \S \ref{S9R}. 
By Theorem \ref{ad}, we have $n_{\cA_1,H}=5$ and hence 
$F_{H}(\cA_1)=[3,0,2]$. 
Since $H\not\in\cA$, $H$ is one of the following. 
\begin{eqnarray*} 
&& 
\overline{\{1,2,9\}\{3,6,7\}}, 
\overline{\{1,2,9\}\{4,5,8\}}, 
\overline{\{1,3,5\}\{2,4,6\}}, 
\overline{\{1,3,5\}\{7,8,9\}}, 
\\ && 
\overline{\{1,4,7\}\{2,3,8\}}, 
\overline{\{1,4,7\}\{5,6,9\}}, 
\overline{\{1,6,8\}\{2,5,7\}}, 
\overline{\{1,6,8\}\{3,4,9\}}, 
\\ && 
\overline{\{2,3,8\}\{5,6,9\}}, 
\overline{\{2,4,6\}\{7,8,9\}}, 
\overline{\{2,5,7\}\{3,4,9\}}, 
\overline{\{3,6,7\}\{4,5,8\}}. 
\end{eqnarray*} 
Recall that any $\sigma\in\fS_9^\ast$ 
is realized by the action of $\PGL(3,\bC)$ and 
$\Gal(\bQ[\sqrt{-3}]/\bQ)$. 
Therefore it suffices to show that 
$\fS_9^\ast$ acts transitively 
on the pairs in the above list. 
Observe that each points 
of $M_2(\cA)$ lies on two candidates of $H$. 
We denote 
the $\fS_9^\ast$-equivalence 
by the symbol ``$\sim$''. 
First note that 
$(\{1,2,9\},\{3,6,7\})\sim (\{3,6,7\},\{1,2,9\})$ 
by $(1,3)(2,6)(7,9)\in\fS_9^\ast$ and 
$(\{1,2,9\},\{3,6,7\})\sim (\{1,2,9\},\{4,5,8\})$ 
by $(3,4)(5,7)(6,8)\in\fS_9^\ast$. 
As the point transferred from $\{1,2,9\}$ has the 
same property as above, it follows that 
$(\{1,2,9\},\{3,6,7\})\sim (\{3,6,7\},\{4,5,8\})$. 
Namely, we have 
$$ 
(\{1,2,9\},\{3,6,7\})\sim 
(\{1,2,9\},\{4,5,8\})\sim 
(\{3,6,7\},\{4,5,8\}). 
$$ 
By applying 
$(2,3)(4,7)(5,9), (2,7)(4,9)(6,8), 
(2,6)(3,5)(8,9)\in\fS_9^\ast$, 
We have 
\begin{eqnarray*} 
(\{1,2,9\},\{3,6,7\}) 
\sim (\{1,3,5\},\{2,4,6\}) 
\sim (\{1,4,7\},\{2,3,8\}) 
\sim (\{1,6,8\},\{2,5,7\}). 
\end{eqnarray*} 
Therefore we have the following, which completes the proof 
of the uniqueness of $\cA_1$. 
\begin{eqnarray*} 
(\{1,2,9\},\{3,6,7\})&\sim& 
(\{1,3,5\},\{2,4,6\})\sim 
(\{1,3,5\},\{7,8,9\})\sim 
(\{2,4,6\},\{7,8,9\}) 
\\ 
&\sim& 
(\{1,4,7\},\{2,3,8\})\sim 
(\{1,4,7\},\{5,6,9\})\sim 
(\{2,3,8\},\{5,6,9\}) 
\\ 
&\sim& 
(\{1,6,8\},\{2,5,7\})\sim 
(\{1,6,8\},\{3,4,9\})\sim 
(\{2,5,7\},\{3,4,9\}). 
\end{eqnarray*} 
Note that $H_{10}$ in \S \ref{S9V} is $\overline{\{1,3,5\}\{2,4,6\}}$ 
and $\cA\cup\{H_{10}\}\in\cI_{10}$.  By the uniqueness of 
$\cA_1$, we conclude that $\cA_1\in\cI_{10}$. 
Therefore (1) is verified. 
\item[(2)] 
Since $\cF_8=\cI_8$ and $\cS_9\subset\cR_9$, we have 
$\cF_9=\cI_9\sqcup\cS_9=\cR_9$ by Lemma \ref{Red}. 
Let $\cA\in\cF_{10}$.  Since $\cS_{10}=\emptyset$, there exists 
$H\in\cA$ such that $\cA'=\cA\setminus\{H\}\in\cF_9=\cI_9\sqcup\cS_9$. 
If $\cA'\in\cI_9$, then $\cA\in\cI_{10}$. 
If $\cA'\in\cS_9$, we also have $\cA\in\cI_{10}$ by (1). 
Therefore we have $\cF_{10}=\cI_{10}$. 
\end{proof} 
We remark that now Theorem \ref{Main} is 
established for $|\cA|\leq10$ 
by Propositions \ref{Classify} and \ref{9-10}. 
We give the proof of Corollary \ref{le4}. 
\begin{proof}[Proof of Corollary \ref{le4}] 
The proof is by the induction on $\ell=|\cA|$.  If $\ell\leq10$, we 
have nothing to prove. Assume that $\ell\geq11$. 
If $\cA\in\cF_{\ell}\setminus\cS_{\ell}$, then 
$H\in\cA$ such that $\cA'=\cA\setminus\{H\}\in\cF_{\ell-1}$. 
Since $\exp(\cA')=(1,a-1,b)$ or $(1,a,b-1)$, 
we have $\cA'\in\cI_{\ell-1}$ by induction hypothesis, 
and hence $\cA\in\cI_{\ell}$. Thus we may assume 
$\cA\in\cS_{\ell}$. We set $a\leq b$ and take $H\in\cA$. 
By Lemma \ref{a-2}, we have $\mu_P(\cA)\leq a-2$ for any $P\in H$. 
By definition of $S_{\ell}$, we have $n_{\cA,H}\leq a$. 
However it is a contradiction since we have the following 
inequalities. 
$$11-1\leq\ell-1=\mu_{\cA,H}\leq (a-2)a 
\leq(4-2)\cdot4=8. 
\eqno\qed$$ 
\renewcommand{\qed}{} 
\end{proof} 
\end{subsection} 
\end{section} 
\begin{section}{Determination of $\cS_{11}$}\label{S11} 
In this section, we show that $\cS_{11}$ consists of pentagonal 
arrangements. 
\begin{subsection}{Absence of $\cA\in\cS_{11}$ with 
$F(\cA)=[1,14,2]$} 
Let $\cA\in\cS_{11}$. 
By Proposition \ref{Classify}, we have 
$F(\cA)=[1,14,2]$, 
$[4,11,3]$, $[7,8,4]$ or $[10,5,5]$. 
Suppose $F(\cA)=[1,14,2]$. Take $P\in M_{3}(\cA)$. 
Since $|M_1(\cA)\cup M_{3}(\cA)\setminus\{P\}|=2$ 
and $|\cA_P|=4$, there exists 
$H\in\cA_P$ such that 
$M_1(H,\cA)=\emptyset$ and $M_3(H,\cA)=\{P\}$. 
Then it follows that 
$11-1=\mu_{\cA,H}=0+2F_{H,2}(\cA)+3\cdot1$, 
which is impossible. 
Therefore we have $F(\cA)=[4,11,3], [7,8,4]$ or $[10,5,5]$. 
In the following subsections, we 
show that only the case $F(\cA)=[10,5,5]$ 
occurs, which corresponds to the case when 
$\cA$ is a pentagonal arrangement. 
\end{subsection} 
\begin{subsection}{Subarrangement $\cA'$ of $\cA$} 
In the case $F(\cA)=[4,11,3]$ or $[7,8,4]$, 
we construct a subarrangement 
$\cA'=\{H_1,\ldots,H_{10}\}$ of $\cA$ satisfying the following. 
$$ 
F(\cA')=[9,6,3],\quad 
n_{\cA',H_i}= 
\begin{cases} 
4 & i=1 \\ \leq5 & 2\leq i\leq 10 
\end{cases},\quad 
M_3(\cA')=\left\{\begin{array}{ccc} 
H_1\cap H_2, \\ H_1\cap H_3, \\ H_2\cap H_3 
\end{array}\right\}. 
\eqno{(\ast)} 
$$ 
Suppose $F(\cA)=[4,11,3]$. 
Note that $n_{\cA,H}=4,5$ for any $H\in\cA$, since 
$3\cdot3<10=\mu_{\cA,H}$. 
Thus $n_{\cA,H}=4,5$ for any $H\in\cA$. Since 
$\sum_{H\in\cA}n_{\cA,H}=2\cdot4+3\cdot11+4\cdot3 
=5\cdot11-2$, we may set $n_{\cA,H_1}=n_{\cA,H_2}=4$ and 
$n_{\cA,H_i}=5$ for $3\leq i\leq 11$. 
Note that $F_{H_i,3}(\cA)\geq2$ for $i=1,2$, since 
$2\cdot3+3\cdot1<10=\mu_{\cA,H}$. 
Thus we may set $M_3(\cA)=\{P_1,P_2,P_3\}$, 
$H_1=\overline{P_2P_3}$ and $H_2=\overline{P_1P_3}$. 
Since $n_{\cA,H_1}=4=\cA_{P_1}\cap H_1$, we have 
$\overline{P_1P_2}\in\cA$, which we set $H_3$. 
Since $|\bigcup_{P\in M_3(\cA)}\cA_{P}|=4\cdot3-3=9<11$, 
we may set $H_{11}\cap M_3(\cA)=\emptyset$. 
Then $F_{\cA,H_{11}}=[0,5,0]$. 
Now it is easy to check that 
$\cA'=\cA\setminus\{H_{11}\}$ 
satisfies the condition $(\ast)$. 
Suppose $F(\cA)=[7,8,4]$. 
Since $\sum_{H\in\cA}n_{\cA,H}=2\cdot7+3\cdot8+4\cdot4=5\cdot11-1$, 
we may assume $n_{\cA,H_1}=4$ and $n_{\cA,H_i}=5$ for $2\leq i\leq 11$. 
Note that $F_{H_1,3}(\cA)\neq1,4$ since 
$2\cdot3+3\cdot1<\mu_{\cA,H_1}=10<3\cdot4$. 
Thus $F_{H_1,3}(\cA)=2, 3$.  We set 
$M_3(\cA)=\{P_1, P_2, P_3, P_4\}$ so that 
$P_1\not\in H_1=\overline{P_2P_3}$. 
Since $|\cA_{P_1}\cap H_1|=4=n_{\cA,H_1}$, 
we have $P_2,P_3\in\cA_{P_1}$. Therefore 
we may set $H_2=\overline{P_1P_3}$ and 
$H_3=\overline{P_1P_2}$. 
Since $|\cA_{P_4}|=4$, we may set 
$M_3(\cA)\cap H_{11}=\{P_4\}$. 
Since $n_{\cA,H_{11}}=5$ and $F_{H_{11},3}(\cA)=1$, we have 
$F_{H_{11}}(\cA)=[1,3,1]$. 
Now it is easy to check that 
$\cA'=\cA\setminus\{H_{11}\}$ 
satisfies the condition $(\ast)$. 
\end{subsection} 
\begin{subsection}{Lattice structure of $\cA'$} 
First we determine $F_{H_i}(\cA')$ for $1\leq i\leq 10$. 
We have 
$n_{\cA',H}=4,5$ for any $H\in\cA'$ 
since $|M_3(H,\cA')|\leq2$ and 
$2+3\cdot2<9=\mu_{\cA',H}$. 
Since $n_{\cA',H_1}=4$ and $F_{3,H_1}(\cA')=2$, 
we have  $F_{H_1}(\cA')=[1,1,2]$. 
Since $\bigcup_{P\in M_3(\cA')}\cA_P'=4\cdot3-3=9$, 
we may set $M_{3}(H_{10},\cA')=\emptyset$, 
and hence $F_{H_{10}}=[1,4,0]$. 
Since $H_2\cap H_{10}\neq H_3\cap H_{10}$, 
we may set $H_2\cap H_{10}\in M_2(\cA')$. 
Since $F_{H_2,3}(\cA')=2$, we have 
$F_{H_2}(\cA')=[1,1,2]$. 
Since $\sum_{H\in\cA'}n_{\cA',H}=2\cdot9+3\cdot6+4\cdot3=5\cdot10-2$, 
we have $n_{\cA',H_i}=4$ for $i=1,2$ and 
$n_{\cA',H_i}=5$ for $3\leq i\leq 10$. 
We have $F_{H_3}(\cA')=[3,0,2]$ and 
$F_{H_i}(\cA')=[2,2,1]$ for $4\leq i\leq 9$ 
since $F_{H_3,3}(\cA')=2$ and $F_{H_i,3}(\cA')=1$. 
As a conclusion, we have the following. 
$$ 
F_{H_i}(\cA')= 
[1,1,2]\ (i=1,2),\ 
[3,0,2]\ (i=3),\ 
[2,2,1]\ (4\leq i\leq 9),\ 
[1,4,0]\ (i=10). 
$$ 
Now we determine the lattice structure of $\cA'$. We may set 
$$ 
M_3(\cA')=\{ 
P_1=\{2,3,4,5\}, 
P_2=\{1,3,6,7\}, 
P_3=\{1,2,8,9\} 
\}. 
$$ 
Note that 
$\{3,8\},\{3,9\},\{3,10\}\in M_1(\cA')$ 
since $F_{H_3}(\cA')=[3,0,2]$. 
By symmetry of $(4,5)$ or $(6,7)$, 
we may set 
$\{1,4,10\}, \{2,6,10\}\in M_2(\cA')$. 
Since $H_8\cap H_{10}$ lies on $H_5$ or $H_7$, 
we may set $\{5,8,10\}\in M_2(\cA')$ 
by symmetry of $(1,2)(4,6)(5,7)$. 
We also have $\{7,9,10\}\in M_2(\cA')$. 
Since 
$\{\{7,9,10\}\}\cup\left(\cA'_{P_1}\cap H_7\right)$ 
defines all intersection points on $H_7$, 
we have $H_7\cap H_8\in M_2(\cA)$. 
By the same reasoning for 
$\{\{5,8,10\}\}\cup\left(\cA'_{P_2}\cap H_8\right)$ 
on $H_8$, we have $H_4\cap H_8\in M_2(\cA)$. 
Thus we have $M_2(H_8,\cA')=\{\{5,8,10\},\{4,7,8\}\}$. 
Since 
$F_{H_i,2}(\cA')=2$ for $i=5,6,9$, 
The last point of $M_2(\cA)$ is $\{5,6,9\}$. 
Therefore $M_2(\cA')$, and hence $M_1(\cA')$, are as follows, 
which determine the lattice of $\cA'$. 
\begin{eqnarray*} 
M_2(\cA')&=&\left\{ 
\{1,4,10\},\ 
\{2,6,10\},\ 
\{4,7,8\},\ 
\{5,6,9\},\ 
\{5,8,10\},\ 
\{7,9,10\} 
\right\}, 
\\ 
M_1(\cA')&=&\left\{ 
\{1,5\},\ 
\{2,7\},\ 
\{3,8\},\ 
\{3,9\},\ 
\{3,10\},\ 
\{4,6\},\ 
\{4,9\},\ 
\{5,7\},\ 
\{6,8\} 
\right\}. 
\end{eqnarray*} 
\end{subsection} 
\begin{subsection}{Realization of $\cA'$} 
We determine the realization of $\cA'$ in $\bP_{\bC}^2$. 
We may set $H_{10}$ as the infinity line $H_{\infty}$, 
$P_1=(1,0)$, $P_2=(0,1)$ and $P_3=(0,0)$. 
Then 
$$ 
h_1=x,\ 
h_2=y,\ 
h_3=x+y-1,\ 
h_4=x-1,\ 
h_6=y-1. 
$$ 
Set $\{4,7,8\}=(1,p)$ and $\{5,6,9\}=(q,1)$ 
($p,q\neq0$). 
Then we have 
$$ 
h_5=x-(q-1)y-1 
,\ 
h_7=(p-1)x-y+1 
,\ 
h_8=px-y 
,\ 
h_9=x-qy. 
$$ 
Since 
$H_5\parallel H_8$ and 
$H_7\parallel H_9$, 
we have 
$p(q-1)=(p-1)q=1$. 
Therefore we conclude that 
$p=q=\zeta$ where $\zeta$ is a solution of 
$\zeta^2-\zeta-1=0$, and we may reset the equations as 
$$ 
h_5=\zeta x-y-\zeta 
,\ 
h_7=x-\zeta y+\zeta 
,\ 
h_8=\zeta x-y 
,\ 
h_9=x-\zeta y. 
$$ 
\end{subsection} 
\begin{subsection}{Absence of $\cA\in\cS_{11}$ 
with $F(\cA)=[4,11,3]$ or $[7,8,4]$} 
We show that we cannot extend the realization of $\cA'$ 
obtained above to $\cA$. 
Assume that $\cA=\cA'\cup\{H_{11}\}$ is realizable. 
Suppose $F(\cA)=[4,11,3]$. 
Recall that $F_{H_{11}}(\cA)=[0,5,0]$. 
Since $|M_1(\cA')\cap H_{11}|=5$ 
and 
$M_1(\cA')\subset 
\left\{\{1,5\},\{4,9\}\right\} 
\cup \bigcup_{i=3,6,7}H_i$, 
we have 
$H_{11}=\overline{\{1,5\}\{4,9\}} 
=\overline{(0,-\zeta)(1,\zeta^{-1})}$ and 
$h_{11}=(1-2\zeta)x+y+\zeta$. 
Therefore $H_{10}\cap H_{11}\in M_1(\cA)$, 
a contradiction. 
Suppose $F(\cA)=[7,8,4]$. 
Recall that $F_{H_{11}}(\cA)=[1,3,1]$. 
Thus $M_1(H_{11},\cA)=\{H_i\cap H_{11}\}$ for some $1\leq i\leq 10$. 
Since 
$n_{\cA',H_i}+1=n_{\cA,H_i}\leq5$, 
we have $i=1$ or $2$. We may set $H_{1}\cap H_{11}\in M_1(\cA)$ 
by the symmetry of the coordinates $x$ and $y$. Note that 
$|M_1(\cA')\cap H_{11}|=3$ and $|M_2(\cA')\cap H_{11}|=1$. 
In particular, $H_2\cap H_{11}=\{2,7\}$ or $\{2,6,10\}$. 
Assume that $H_2\cap H_{11}=\{2,7\}=(-\zeta,0)$. 
Then $M_2(\cA')\cap H_{11}=\{\{5,6,9\}\}$ or $\{\{5,8,10\}\}$. 
If $H_{11}\ni\{5,6,9\}=(\zeta,1)$, we have $h_{11}=x-2\zeta y+\zeta$. 
Therefore $H_{10}\cap H_{11}\in M_1(\cA)$, a contradiction. 
If $H_{11}\ni\{5,8,10\}$, 
we have $H_{11}\cap M_1(\cA')=\left\{ 
\{2,7\},\{3,9\},\{4,6\}\right\}$. 
Since $\{4,6\}=(1,1)$, 
we have $h_{11}=x-(\zeta+1)y+\zeta$, 
which contradicts to $H_{11}\parallel H_{8}$. 
Assume that $H_2\cap H_{11}=\{2,6,10\}$. Then 
we have 
$M_1(\cA')\cap H_{11}= 
\left\{\{3,8\},\{4,9\},\{5,7\}\right\}$. 
Since 
$\{4,9\}=(1,\zeta-1)$ 
and 
$\{5,7\}=(\zeta+1,\zeta+1)$ 
we have $H_{11}\not\parallel H_{2}$, 
a contradiction. 
 
\medskip 
 
Now we may assume that $F(\cA)=[10,5,5]$. 
\end{subsection} 
\begin{subsection}{Lattice structure of $\cA\in\cS_{11}$}\label{S11L} 
We determine the lattice of $\cA\in\cS_{11}$. 
First we show that 
$\overline{PQ}\in\cA$ for any $P,Q\in M_3(\cA)$, $P\neq Q$. 
Assume that there exist $P,Q\in M_3(\cA)$ such that 
$\overline{PQ}\not\in\cA$. Note that 
$\cA$ has $10+5+5=20$ intersection points, 
and $\cA_P\cup\cA_Q$ covers $4\cdot4+2=18$ of them. 
We set the left 2 intersection points in 
$\cA\setminus(\cA_P\cup\cA_Q)$ as $T_1$ and $T_2$. 
If $H=\overline{T_1T_2}\in\cA$, then 
$\{T_1,T_2\}\cap(\cA_P\cap H)\neq\emptyset$ 
since $|\cA_P\cap H|=4$ and $n_{\cA,H}\leq5$. 
However, it contradicts to the choice of $T_i$. 
If $\overline{T_1T_2}\not\in\cA$, then 
$(\cA_{P}\cup\cA_{Q})\cap(\cA_{T_1}\cup\cA_{T_2}) 
\neq\emptyset$ since $|\cA|=11$, 
$|\cA_{P}\cup\cA_{Q}|=8$ and 
$|\cA_{T_1}\cup\cA_{T_2}|\geq4$. 
It also contradicts to the choice of $T_i$. 
Next we determine $F_{H_i}(\cA)$ for $1\leq i\leq 10$. 
Note that $n_{\cA,H}=5$ for $H\in\cA$ since 
$\sum_{H\in\cA}n_{\cA,H}=2\cdot10+3\cdot5+4\cdot5=5\cdot11$. 
We also have $F_{H,3}(\cA)\leq2$ for $H\in\cA$ since 
$\mu_{\cA,H}=10<1\cdot2+3\cdot3$. 
It follows that, for $P,Q\in M_3(\cA)$ with 
$P\neq Q$, $\overline{PQ}\in\cA$ are distinct each other, forming 
$\binom{5}{2}=10$ lines of $\cA$. 
Thus we may assume $F_{H_i,3}(\cA)=2$, i.e., 
$F_{H_i}(\cA)=[2,1,2]$, for $1\leq i\leq 10$. 
Since $4\cdot5=\sum_{H\in\cA}F_{H,3}(\cA) 
=2\cdot10+F_{H_{11},3}(\cA)$, 
we have $F_{H_{11},3}(\cA)=0$, i.e., 
$F_{H_{11}}(\cA)=[0,5,0]$. 
Therefore, we have 
$$F_{H_i}(\cA)=[2,1,2]\ (1\leq i\leq 10), 
\quad F_{H_{11}}(\cA)=[0,5,0].$$ 
We investigate the lattice structure of $\cA$. 
We may set $M_3(\cA)=\{P_i\mid 1\leq i\leq5\}$ and 
\begin{eqnarray*} 
&& 
H_1=\overline{P_1P_2},\ 
H_2=\overline{P_1P_3},\ 
H_3=\overline{P_1P_4},\ 
H_4=\overline{P_1P_5},\ 
H_5=\overline{P_2P_3},\ 
\\ && 
H_6=\overline{P_2P_4},\ 
H_7=\overline{P_2P_5},\ 
H_8=\overline{P_3P_4},\ 
H_9=\overline{P_3P_5},\ 
H_{10}=\overline{P_4P_5}, 
\end{eqnarray*} 
or, in other words, $M_3(\cA)$ consists of 
the following five points. 
$$ 
P_1=\{1,2,3,4\},\ 
P_2=\{1,5,6,7\},\ 
P_3=\{2,5,8,9\},\ 
P_4=\{3,6,8,10\},\ 
P_5=\{4,7,9,10\}. 
$$ 
Since $H_1\cap H_{11}\in M_2(\cA)$ lies on $H_8$, $H_9$ or $H_{10}$, 
we may set $\{1,9,11\}\in M_2(\cA)$ by symmetry. 
Since $H_3\cap H_{11}\in M_2(\cA)$ lies on $H_5$ or $H_7$, 
we may set $\{3,5,11\}\in M_2(\cA)$ by symmetry of 
$(2,4)(5,7)(8,10)$. Investigating 
$H_{10}\cap H_{11}, H_{6}\cap H_{11}, H_{8}\cap H_{11} \in M_2(\cA)$ 
in this order, we have 
$\{2,10,11\},\{4,6,11\},\{7,8,11\}\in M_2(\cA)$. 
Thus $M_2(\cA)$ is determined. 
$$ 
M_2(\cA)=\left\{ 
\{1,9,11\},\ 
\{2,10,11\},\ 
\{3,5,11\},\ 
\{4,6,11\},\ 
\{7,8,11\} 
\right\}. 
$$ 
Now $M_2(\cA)$ and $M_3(\cA)$ are determined, which 
gives the lattice structure of $\cA$. 
\end{subsection} 
\begin{subsection}{Realization of $\cA\in\cS_{11}$}\label{S11R} 
We determine the realization of $\cA\in\cS_{11}$ 
in $\bP_{\bC}^2$. 
We may set $H_{11}$ as the infinity line $H_{\infty}$, 
$P_1=(0,1)$, $P_2=(0,0)$ and $P_3=(1,0)$. 
By definition of $H_1$, $H_2$, $H_5$ and 
the fact that $P_3\in H_9\parallel H_1$ and 
$P_1\in H_3\parallel H_5$ imply that 
$$h_1=x,\ h_2=x+y-1,\ h_3=y-1,\ h_5=y,\ h_9=x-1.$$ 
We set $P_4=(p,1)$ and $P_5=(1,q)$. 
Since $H_{10}=\overline{P_4P_5}\parallel H_2$ and 
$H_{4}=\overline{P_1P_5}\parallel 
H_{6}=\overline{P_2P_4}$, 
we have $p=q$ and $p(q-1)=1$. 
Thus we have $p=q=\zeta$ where $\zeta$ is a solution of 
$\zeta^2-\zeta-1=0$. 
The left defining equations $h_i$ of $H_i$ are as follows. 
$$ 
h_4=x-\zeta y+\zeta,\ 
h_6=x-\zeta y,\ 
h_7=\zeta x-y,\ 
h_8=\zeta x-y-\zeta,\ 
h_{10}=x+y-\zeta-1. 
$$ 
By this construction, for the permutation 
$\sigma\in\fS_{11}^{\ast} 
=\left\{\sigma\in\fS_{11} 
\mid \sigma(L(\cA))=L(\cA)\right\}$, 
there exists a $\PGL(3,\bC)$-action 
sending each $H_i$ to $H_{\sigma(i)}$, 
or sending each $H_i$ to $\overline{H_{\sigma(i)}}$, 
where $\overline{H_{i}}$ stands for the Galois conjugate 
of $H_i$ by $\Gal(\bQ[\sqrt{5}]/\bQ)$. 
Note also that $\cA$ is transferred to $\overline{\cA}$ 
by 
$\left[(x,y,z)\mapsto(\zeta x+y,x+\zeta y,z) 
\right]\in\PGL(3,\bC)$, 
which sends $H_i$ to $\overline{H_{\nu(i)}}$ 
where 
$\nu=(1,6,5,7)(2,10)(3,8,9,4)\in\fS_{11}^\ast$. 
Thus $\cA$ is realized uniquely up to the $\PGL(3,\bC)$-action. 
\end{subsection} 
\begin{subsection}{Verifying $\cA\in\cS_{11}\subset\cR_{11}$}\label{S11V} 
We check the freeness of $\cA$ realized in \S \ref{S11R} and 
show that $\cA\in\cR_{11}$. 
We set $\cA_1=\cA\cup\{H_{12}\}$ where 
$h_{12}=x-y$. Then we have 
$$\cA_1\cap H_{12}=\left\{ 
(0, 0),\ 
(1, 1),\ 
\left(\frac12, \frac12\right),\ 
(\zeta+1, \zeta+1),\ 
\left(\frac{\zeta+1}{2},\frac{\zeta+1}{2}\right),\ 
H_{12}\cap H_{\infty} 
\right\}.$$ 
Since $\mu_{\cA_1}=\mu_{\cA}+6$, we have 
$\chi({\cA_1},t)=(t-1)(t-5)(t-6)$. 
Thus $\cA_1\in\cF_{12}$ with $\exp(\cA_1)=(1,5,6)$ 
by Theorem \ref{ABT}, 
and hence $\cA\in\cS_{11}$. 
We set $\cA_2=\cA_1\setminus\{H_{\infty}\}$ and 
$\cA_3=\cA_2\setminus\{H_2\}$. 
Since $n_{\cA_1,H_{\infty}}=6$, we have 
$\cA_2\in\cF_{11}$ with $\exp(\cA_2)=(1,5,5)$. 
Since $n_{\cA_2,H_2}=6$, we have 
$\cA_3\in\cF_{10}=\cI_{10}$. 
Therefore, 
$\cA_2\in\cI_{11}$, $\cA_1\in\cI_{12}$ 
and 
$\cA\in\cR_{11}$. 
\begin{rem} 
Note that $\cA\in\cF_{11}$ satisfying $F(\cA)=[10,5,5]$ 
does not necessary belong to $\cS_{11}$.  For example, 
the arrangement $\cA$ defined by the equation 
$xyz(x^2-z^2)(y^2-z^2)$ 
$(x^2-y^2)(x-y+z)(x-y+2z)$ 
satisfies $F(\cA)=[10,5,5]$ 
but $\cA\in\cI_{11}$. 
\end{rem} 
\begin{defn}\label{Pen} 
The arrangement of Example 4.59 in \cite{OT} 
is the cone of the line arrangement consisted of 
5 sides and 5 diagonals of a regular pentagon, 
defined by the equation 
\begin{eqnarray*} 
\varphi_{\pen}&=& 
z\ (4x^2+2x-z) 
\\&& 
(x^4-10x^2y^2+5y^4+6x^3z-10xy^2z+11x^2z^2-5y^2z^2+6xz^3+z^4) 
\\&& 
(x^4-10x^2y^2+5y^4-4x^3z+20xy^2z+6x^2z^2-10y^2z^2-4xz^3+z^4). 
\end{eqnarray*} 
An arrangement in $\bC^3$ 
is called {\it pentagonal} 
if it is $\PGL(3,\bC)$-equivalent to 
$(\varphi_{\pen}=0)$. 
\end{defn} 
It is easy to see that 
$\cA=(\varphi_{\pen}=0)$ satisfies 
$\cA\in\cS_{11}$ and $F(\cA)=[10,5,5]$.  Therefore, 
$$\cS_{11}=\{\text{pentagonal arrangements}\}\subset\cR_{11}.$$ 
By the description in \S \ref{S11R}, the lattice of 
a pentagonal arrangement is realized over $\bQ[\sqrt{5}]$. 
\end{subsection} 
\begin{subsection}{Addition to $\cA\in\cS_{11}$} 
The structures of $\cF_{11}$ and $\cF_{12}$ are given as below. 
\begin{prop}\label{11-12} 
\item[(1)] 
Let $H\in \cA_1\in\cF_{12}$ such that 
$\cA=\cA_1\setminus\{H\}\in\cS_{11}$. 
Then, $\cA_1\in\cI_{12}$ and $\cA_1$ 
has two possibilities 
up to the $\PGL(3,\bC)$-action. 
\item[(2)] 
$\cF_{11}=\cR_{11}=\cI_{11}\sqcup\cS_{11}$ and 
$\cF_{12}=\cI_{12}\sqcup\cS_{12}$. 
\end{prop}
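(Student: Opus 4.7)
For part (1), I begin with the explicit description of a pentagonal arrangement $\cA \in \cS_{11}$ from \S\ref{S11L} and \S\ref{S11R}, noting that $\exp(\cA) = (1, 5, 5)$. The addition theorem (Theorem \ref{ad}) forces $n_{\cA, H} = 6$, so the new line $H$ must pass through exactly six intersection points of $\cA$. Writing $(a, b, c, d)$ for how many of these six points lie on $1, 2, 3, 4$ lines of $\cA$ respectively, the identities $a + b + c + d = 6$ and $a + 2b + 3c + 4d = |\cA| = 11$ force $b + 2c + 3d = 5$, which has only a handful of nonnegative solutions. A further geometric constraint is that all five triple points of $\cA$ lie on $H_{11}$ (cf.\ \S\ref{S11L}), so any $H \neq H_{11}$ contains at most one triple point, which rules out some of the solutions. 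I would then enumerate candidate lines of each surviving combinatorial type directly from the coordinates in \S\ref{S11R}, and classify them under the symmetry group $\fS_{11}^\ast$, which (as recorded in \S\ref{S11R}) is realized by $\PGL(3, \bC)$ together with $\Gal(\bQ[\sqrt{5}]/\bQ)$-conjugation and in particular contains an $\fS_5$ permuting the five quadruple points. The claim is that precisely two orbits of such lines arise; one representative is $H_{12} : x - y = 0$ exhibited in \S\ref{S11V}, and I would write down a concrete equation for a second representative belonging to a distinct combinatorial type.

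For the inductive-freeness conclusion, each resulting $\cA_1$ admits a reduction chain to $\cF_{10} = \cI_{10}$ (Proposition \ref{9-10}) by removing lines of sufficient incidence, exactly as already demonstrated for $H_{12}$ in \S\ref{S11V}; freeness at each intermediate step is confirmed by Theorem \ref{ABT}. Thus $\cA_1 \in \cI_{12}$ for both orbit representatives.

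For part (2), Lemma \ref{Red} applied to $\cF_{10} = \cI_{10}$ gives $\cF_{11} = \cS_{11} \sqcup \cI_{11}$, and the inclusion $\cS_{11} \subset \cR_{11}$ established in \S\ref{S11V} promotes this to $\cF_{11} = \cR_{11} = \cI_{11} \sqcup \cS_{11}$. For $\cF_{12} = \cI_{12} \sqcup \cS_{12}$, take $\cA_1 \in \cF_{12} \setminus \cS_{12}$; then Theorem \ref{ABT} combined with the deletion theorem produces $H \in \cA_1$ with $\cA := \cA_1 \setminus \{H\} \in \cF_{11}$, and $\cA_1 \in \cI_{12}$ follows either trivially by the addition theorem (if $\cA \in \cI_{11}$) or by part (1) (if $\cA \in \cS_{11}$). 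The disjointness $\cI_{12} \cap \cS_{12} = \emptyset$ is immediate from the definition of $\cS_\ell$ together with Theorems \ref{ad} and \ref{ABT}.

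The main obstacle lies in the orbit classification of part (1). While the combinatorial types $(a, b, c, d)$ are few, each must be tested against the concrete coordinate data for the pentagonal realization to see which configurations actually yield a collinear sextuple, and then the $\fS_5$-action on the quadruple points together with Galois conjugation must be applied to collapse the surviving candidates into exactly two equivalence classes. This is a finite but delicate incidence check, which is also where the geometric rigidity inherited from the equation $\zeta^2 - \zeta - 1 = 0$ enters.
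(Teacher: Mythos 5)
Your outline reproduces the paper's argument at the structural level and correctly identifies the combinatorics: writing $F_H(\cA_1)=(a,b,c,d)$, the constraints $a+b+c+d=6$, $a+2b+3c+4d=11$, $c\le 1$ (all triple points lie on $H_{11}$, hence at most one can lie on $H\ne H_{11}$), and $d\le 1$ (forced by $3d\le 5$) give exactly the four candidate types $[1,5,0,0]$, $[2,3,1,0]$, $[3,2,0,1]$, $[4,0,1,1]$ that the paper works with. Part (2) is argued exactly as in the paper via Lemma \ref{Red}, $\cS_{11}\subset\cR_{11}$, and part (1).

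The genuine gap is the one you yourself flag at the end: the incidence classification is stated but not executed, and it is the mathematical substance of the proof. Specifically, you must (a) rule out that any line $H\notin\cA$ realizes $[1,5,0,0]$ — the paper does this by observing that $M_1(\cA)$ splits into the two orbits of the $5$-cycle $\rho=(1,5,8,10,4)(2,6,9,3,7)$ of sizes $5$ and $5$, so any choice of $5$ points from $M_1(\cA)$ has three in one orbit, forcing $H$ to be one of the $H_i$ already in $\cA$; (b) rule out $[2,3,1,0]$ — the paper normalizes by $\rho$ so that $H$ passes through $\{1,9,11\}$ (hence is parallel to $x=0$) and then checks from the explicit coordinate list that no three points of $M_1(\cA)$ share an $x$-coordinate; (c) show that each of $[3,2,0,1]$ and $[4,0,1,1]$ determines a \emph{unique} line through a given quadruple point, and use transitivity of $\langle\rho\rangle$ on $M_3(\cA)$ to collapse to one orbit per type; and (d) verify $\cA_1\in\cI_{12}$ separately for the $[4,0,1,1]$ representative $H=(\zeta x - y +1=0)$, not just for $H_{12}$. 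Also, the symmetry group used to collapse orbits is not the full $\fS_5$ permuting the quadruple points in some a priori way; the paper works concretely with the cyclic subgroup $\langle\rho\rangle$ (realized by $\PGL(3,\bC)$ plus Galois conjugation), which suffices. Without steps (a)–(d) the claim that ``precisely two orbits arise'' is unsupported, so as written the proof is an accurate plan rather than a proof.
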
 
\begin{proof} 
\item[(1)] 
We may assume that $\cA$ has the description 
as in \S \ref{S11L} and \S \ref{S11R}. 
By Theorem \ref{ad}, we have $n_{\cA_1,H}=6$. 
Note that 
$F_{H,3}(\cA_1)\leq1$ since $M_2(\cA)\subset H_{11}$ 
and $F_{H,4}(\cA_1)\leq1$ since 
$\mu_{\cA_1,H}=11<1\cdot4+4\cdot2$. 
Thus 
$F_{H}(\cA_1)=[1,5,0,0],[2,3,1,0],[3,2,0,1]$ or $[4,0,1,1]$. 
Suppose $F_{H}(\cA_1)=[1,5,0,0]$. 
By the description in \S \ref{S11L}, we have 
$$M_1(\cA)=\left\{ 
\{1,8\},\{4,8\},\{4,5\},\{5,10\},\{1,10\} 
\right\}\cup\left\{ 
\{2,6\},\{2,7\},\{3,7\},\{3,9\},\{6,9\} 
\right\}. 
$$ 
However, since one of the above two sets contains 
$3$ elements of $H\cap M_1(\cA)$, $H$ coincides with 
some $H_i\in\cA$, which is a contradiction. 
Therefore $F_{H}(\cA_1)\neq[1,5,0,0]$. 
Suppose $F_{H}(\cA_1)=[2,3,1,0]$. 
Note that the permutation 
$\rho=(1,5,8,10,4)(2,6,9,3,7)$ 
is an element of $\fS_{11}^\ast$, 
and the group $\langle\rho\rangle$ acts on 
$M_2(\cA)$ or $M_3(\cA)$ 
transitively. 
Since $\rho$ is realized by 
the action of $\PGL(3,\bC)$ and 
$\Gal(\bQ[\sqrt{5}]/\bQ)$, 
we may assume $\{1,9,11\}\in H$, i.e., $H\parallel(x=0)$. 
On the other hand, by the direct calculation, we have 
$$ 
M_1(\cA)= 
\left\{\begin{array}{lll} 
\{1,8\}=(0, -\zeta), & \{1,10\}=(0, \zeta+1), 
& \{2,6\}=(\zeta-1, 2-\zeta), 
\\ 
\{4,5\}=(-\zeta, 0), 
& \{5,10\}=(1+\zeta,0), 
& \{2,7\}=(2-\zeta,\zeta-1), 
\\ 
\{3,7\}=(\zeta-1, 1), 
& \{3,9\}=(1, 1), 
& \{4,8\}=(\zeta+1, \zeta+1). 
\\ 
\{6,9\}=(1, \zeta-1), 
\end{array}\right\}. 
$$ 
It is easy to see that no three points of $M_1(\cA)$ 
share the same $x$-coordinate, which contradicts to 
$F_{H,2}(\cA_1)=3$. 
Therefore $F_{H}(\cA_1)\neq[2,3,1,0]$. 
We show that, for each 
$\Gamma\in\{[3,2,0,1],[4,0,1,1]\}$ 
and for each $P\in M_3(\cA)$, exists 
the unique line $H\not\in\cA$ such that 
$P\in H$ and $F_H(\cA_1)=\Gamma$. 
First we assume $\{1,2,3,4\}\in H$. 
Suppose $\Gamma=[3,2,0,1]$. 
Since $H\not\in\cA$, 
we have $H\cap M_1(\cA)=\{\{5,10\},\{6,9\}\}$, 
and hence $H=(x+(1+\zeta)(y-1)=0)$. 
It is easy to check that $F_{H}(\cA_1)=[3,2,0,1]$. 
Suppose $\Gamma=[4,0,1,1]$. 
Since $H\not\in\cA$, 
we have $H\cap M_2(\cA)=\{\{7,8,11\}\}$, 
and hence $H=(\zeta x-y+1=0)$. 
It is easy to check that $F_{H}(\cA_1)=[4,0,1,1]$, 
We have seen that the unique $H$ exists for each $\Gamma$ 
if $P=\{1,2,3,4\}$.  To have the unique $H$ passing through 
another $P\in M_3(\cA)$, we have only to apply $\rho$ repeatedly. 
Therefore, $\cA_1$'s sharing the same $F_H(\cA_1)$ are transferred 
by the $\PGL(3,\bC)$-action. 
Next we show $\cA_1\in\cI_{12}$. 
Note that $\cA_1$ in \S \ref{S11V} 
satisfies $F_{H_{12}}(\cA_1)=[3,2,0,1]$ 
and $\cA_1\in\cI_{12}$. It follows that 
$\cA_1\in\cI_{12}$ when $F_{H}(\cA_1)=[3,2,0,1]$. 
We show that $\cA_1\in\cI_{12}$ when 
$F_{H}(\cA_1)=[4,0,1,1]$. We may assume 
$\cA_1=\cA\cup\{H\}$ with $H=(\zeta x-y+1=0)$. 
By Theorem \ref{ad}, we see that 
$\cA_1\in\cF_{12}$ with $\exp(\cA_1)=(1,5,6)$. 
We set $\cA_2=\cA_1\setminus\{H_{5}\}$ and 
$\cA_3=\cA_2\setminus\{H_9\}$. 
Since $n_{\cA_1,H_{5}}=6$, we have 
$\cA_2\in\cF_{11}$ with $\exp(\cA)=(1,5,5)$. 
Since $n_{\cA_2,H_9}=6$, we have 
$\cA_3\in\cF_{10}=\cI_{10}$. 
Therefore, we have 
$\cA_2\in\cI_{11}$ and $\cA_1\in\cI_{12}$. 
Thus we conclude that $\cA_1\in\cI_{12}$ 
for both cases of $F_{H}(\cA_1)$. 
\item[(2)] 
Since $\cF_{10}=\cI_{10}$ and $\cS_{11}\subset\cR_{11}$, we have 
$\cF_{11}=\cI_{11}\sqcup\cS_{11}=\cR_{11}$ by Lemma \ref{Red}. 
Let $\cA\in\cF_{12}\setminus\cS_{12}$.  Then, there exists 
$H\in\cA$ such that 
$\cA'=\cA\setminus\{H\}\in\cF_{11}=\cI_{11}\sqcup\cS_{11}$. 
If $\cA'\in\cI_{11}$, then $\cA\in\cI_{12}$. 
If $\cA'\in\cS_{11}$, we also have $\cA\in\cI_{12}$ by (1). 
Thus $\cF_{12}=\cI_{12}\sqcup\cS_{12}$. 
\end{proof} 
\end{subsection} 
\end{section} 
\begin{section}{Determination of $\cS_{12}$}\label{S12} 
In this section, we show that $\cS_{12}$ consists of 
monomial arrangements associated to the group $G(4,4,3)$. 
\begin{subsection}{Realization of $\cA\in\cS_{12}$} 
Let $\cA\in\cS_{12}$. 
By Proposition \ref{Classify}, $F(\cA)=[0,16,3]$. 
We show that $F_H(\cA)=[0,4,1]$ for any $H\in\cA$. 
Note that $n_{\cA,H}=5$ for any $H\in\cA$, 
since $\sum_{H\in\cA}n_{\cA,H}=3\cdot16+4\cdot3=5\cdot12$. 
We have $F_{H,3}(\cA)\leq1$ for $H\in\cA$, since 
$\mu_{\cA,H}=11<2\cdot3+3\cdot2$. 
If 
$M_3(H_0,\cA)=\emptyset$ for some $H_0\in\cA$, 
then 
$11=\mu_{P,H_0}=2F_{H_0,2}(\cA)$, 
a contradiction. Thus, for any $H\in\cA$, we have 
$F_{H,3}(\cA)=1$, and hence $F_H(\cA)=[0,4,1]$. 
We determine the realization of $\cA$ in $\bP_{\bC}^2$. 
We may set 
$$M_3(\cA)=\{\{1,2,3,4\},\{5,6,7,8\},\{9,10,11,12\}\}.$$ 
Since $F_{H_9,2}(\cA)=4$, we may assume 
$M_2(H_9,\cA)=\{\{1,5,9\},\{2,6,9\},\{3,7,9\},\{4,8,9\}\}$. 
We may set $\overline{\{1,2,3,4\}\{5,6,7,8\}} 
\not\in\cA$ as 
the infinity line $H_{\infty}$ and 
$$ 
h_1=x,\ h_2=x-1,\ h_3=x-p,\ h_4=x-q,\ 
h_5=y,\ h_6=y-1,\ h_7=y-s,\ h_8=y-t. 
$$ 
where 
$ 
|\{0,1,p,q\}|=|\{0,1,s,t\}|=4$. 
By choice of $H_9$, we have $s=p$, $t=q$ and $h_9=x-y$. 
We set 
$\alpha_i=\prod_{j=0}^3h_{4i-j}$ for $1\leq i\leq 3$. 
Since $F_{H,2}(\cA)=4$ for any $H\in\cA$, 
we have 
$\Van(\alpha_1,\alpha_2)=M_2(\cA)\subset\Van(\alpha_3)$. 
Therefore we have $\alpha_3\in\sqrt{(\alpha_1,\alpha_2)} 
=(\alpha_1,\alpha_2)$. 
Since $\deg(\alpha_i)=4$ for $i=1,2,3$, there exists 
$a,b\in\bC\setminus\{0\}$ such that $\alpha_3=a\alpha_1+b\alpha_2$. 
Since $\alpha_3\in(x-y)$, we have $b=-a$.  Therefore 
we may set $a=1, b=-1$. Now we obtain 
$$\alpha_3=\alpha_1-\alpha_2=x(x-1)(x-p)(x-q)-y(y-1)(y-p)(y-q).$$ 
Set $\beta=h_{10}h_{11}h_{12}=\alpha_3/(x-y)$.  Then we have 
$$ 
\beta=x^3+x^2y+xy^2+y^3-(p+q+1)(x^2+xy+y^2) 
+(pq+p+q)(x+y)-pq. 
$$ 
Since 
$\beta$ is a symmetric polynomial in $x$ and $y$, 
we may set 
$$ 
\beta=u^{-1}(x+uy+v)(ux+y+v)(x+y-2w). 
\quad(u,v,w,\in\bC,\ u\neq0). 
$$ 
Then we have $\{9,10,11,12\}=(w,w)$ and hence 
$v=-(1+u)w$. 
Comparing coefficients of $x^2y$, $x^2$, $x$ 
and constant terms, we obtain 
$1 = u+1+u^{-1}$, 
$-p-q-1 = -w(u+4+u^{-1})$, 
$pq+p+q = 3u^{-1}(1+u)^2w^2$ 
and $-pq = -2u^{-1}(1+u)^2w^3$. 
Therefore we have 
$$ 
u^2=-1,\quad 
p+q = 4w-1,\quad 
pq+p+q = 6w^2,\quad 
pq = 4w^3. 
$$ 
and hence $2w-1=0, \pm\sqrt{-1}$. 
Now it is easy to show that 
$$ 
\{1,p,q\}= 
\left\{1, 
\pm\sqrt{-1}, 1 \pm \sqrt{-1} 
\right\}, 
\left\{1, 
(1 + \sqrt{-1})/2, 
(1 - \sqrt{-1})/2 
\right\}. 
$$ 
These 3 possibilities of $(p,q)$ are identified 
by the actions 
$(x,y)\mapsto(x/p,y/p)$ or $(x,y)\mapsto(x/q,y/q)$, 
which corresponds to the changing of scale so as to set 
$H_2$, $H_3$ or $H_4$ to be $(x=1)$. 
Here we adopt $\{p,q\}=\{\sqrt{-1}, 1+\sqrt{-1}\}$. 
Then we have 
$$\alpha_3=(x-y)(x-\sqrt{-1}y- 1) 
(x+y-1-\sqrt{-1})(x+\sqrt{-1}y-\sqrt{-1}).$$ 
Now we have obtained the unique realization of $\cA\in\cS_{12}$ 
up to the $\PGL(3,\bC)$-action. 
\end{subsection} 
\begin{subsection}{Verifying $\cA\in\cS_{12}\subset\cR_{12}$} 
We set $\cA_1=\cA\cup\{H_{\infty}\}$. 
It is easy to see that 
$n_{\cA_1,H_{\infty}}=6$. 
Since $\mu_{\cA_1}=\mu_{\cA}+6$, we have 
$\chi({\cA_1},t)=(t-1)(t-5)(t-7)$. 
Thus $\cA_1\in\cF_{13}$ with $\exp(\cA_1)=(1,5,7)$ 
by Theorem \ref{ABT}, and hence $\cA\in\cS_{12}$. 
We set $\cA_2=\cA_1\setminus\{H_{9}\}$ and 
$\cA_3=\cA_2\setminus\{H_{10}\}$. 
Since $n_{\cA_1,H_{9}}=6$, we have 
$\cA_2\in\cF_{12}$ with $\exp(\cA_2)=(1,5,6)$. 
Since $n_{\cA_2,H_{10}}=6$, we have 
$\cA_3\in\cF_{11}=\cR_{11}$. 
Therefore, $\cA_2\in\cR_{12}$, $\cA_1\in\cR_{13}$ 
and $\cA\in\cR_{12}$. 
 
\medskip 
 
In fact, to check whether $\cA\in\cF_{12}$ belongs to $\cS_{12}$ or not, 
we have only to check $F(\cA)$. 
\begin{lem} 
If $\cA\in\cF_{12}$ satisfies $F(\cA)=[0,16,3]$, 
then $\cA\in\cS_{12}$. 
\end{lem}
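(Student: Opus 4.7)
The plan is to extract $\exp(\cA)$ directly from $F(\cA)$ using the characteristic polynomial formula, and then to bound $n_{\cA,H}$ locally for each $H$ by the pair of $F_H$-identities.

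First I would read off from $F(\cA)=[0,16,3]$ the global invariant
\[ \mu_\cA \;=\; \sum_{i} i\,F_i(\cA) \;=\; 2\cdot 16 + 3\cdot 3 \;=\; 41. \]
Plugging into the identity $\chi(\cA,t)=(t-1)\{t^2-(|\cA|-1)(t+1)+\mu_\cA\}$ with $|\cA|=12$ gives
\[ \chi(\cA,t) \;=\; (t-1)(t^2-11t+30) \;=\; (t-1)(t-5)(t-6). \]
Since $\cA$ is free, Theorem \ref{factorization} forces $\exp(\cA)=(1,5,6)$, and in particular $\min(a,b)=5$.

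Next I would fix any $H\in\cA$ and exploit that $F_1(\cA)=0$ means $L_2(\cA)$ contains no double points, so $F_{H,1}(\cA)=0$, and that $F_i(\cA)=0$ for $i\geq 4$ so $F_{H,i}(\cA)=0$ for $i\geq 4$. Then the two $F_H$-identities reduce to
\[ n_{\cA,H} \;=\; F_{H,2}(\cA)+F_{H,3}(\cA), \qquad 2F_{H,2}(\cA)+3F_{H,3}(\cA) \;=\; \mu_{\cA,H} \;=\; |\cA|-1 \;=\; 11. \]
The nonnegative integer solutions are $(F_{H,2},F_{H,3})\in\{(4,1),(1,3)\}$, giving $n_{\cA,H}\in\{5,4\}$.

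In both cases $n_{\cA,H}\leq 5=\min(a,b)$, so $\max_{H\in\cA}n_{\cA,H}\leq\min(a,b)$ and $\cA\in\cS_{12}$ by the definition of $\cS_\ell$. There is no substantive obstacle: once the characteristic polynomial (and hence the exponents) is read off from $F(\cA)$, the fact that $F_1=0$ and $F_i=0$ for $i\geq 4$ leaves too few triples $(F_{H,1},F_{H,2},F_{H,3})$ satisfying $\mu_{\cA,H}=11$ to allow $n_{\cA,H}>5$.
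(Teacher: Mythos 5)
Your proof is correct and takes a cleaner, more direct route than the paper's. Both arguments ultimately rest on the same two observations: $\mu_\cA=41$ forces $\chi(\cA,t)=(t-1)(t-5)(t-6)$ and hence $\exp(\cA)=(1,5,6)$ by Theorem~\ref{factorization}, and the local identities $F_{H,1}=0$, $F_{H,i}=0$ for $i\geq 4$, $2F_{H,2}+3F_{H,3}=11$ tightly constrain each line. The paper, however, runs a proof by contradiction: supposing some $H_1$ has $n_{\cA,H_1}\geq 6$, it uses the average $\sum_H n_{\cA,H}=60$ to extract a line $H_2$ with $n_{\cA,H_2}\leq 4$, deduces $F_{H_2}(\cA)=[0,1,3]$ (so $H_2$ passes through all three quadruple points), and then picks a line $H_3$ through the unique triple point of $H_2$ to obtain the parity contradiction $11=\mu_{\cA,H_3}=2n_{\cA,H_3}$ after arguing $H_3\cap M_3(\cA)=\emptyset$. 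You instead enumerate the nonnegative solutions of $2F_{H,2}+3F_{H,3}=11$ once and for all, getting $(F_{H,2},F_{H,3})\in\{(4,1),(1,3)\}$ and hence $n_{\cA,H}\in\{5,4\}$ for \emph{every} $H$ simultaneously, which already gives $\cA\in\cS_{12}$. Your version avoids the auxiliary lines $H_1,H_2,H_3$ and the slightly subtle geometric step (why $H_3$ misses all quadruple points), and has the small bonus of pinning down $n_{\cA,H}$ exactly rather than just bounding it.
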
 
\begin{proof} 
If $\cA\not\in\cS_{12}$, there exists $H_1\in\cA$ 
such that $n_{\cA,H_1}\geq6$.  Since 
$\sum_{H\in\cA}n_{\cA,H}=5\cdot12$, 
there exists $H_2\in\cA$ 
such that $n_{\cA,H_2}\leq4$.  Since $F(\cA)=[0,16,3]$, 
we have $F_{H_2}(\cA)=[0,1,3]$. Take $H_3\in\cA\setminus\{H_2\}$ 
such that $\mu_{H_2\cap H_3}(\cA)=2$.  Then, since 
$H_3\cap M_3(\cA)=\emptyset$, we have 
$12-1=\mu_{\cA,H_3}=2n_{\cA,H_3}$, a contradiction. 
\end{proof} 
\begin{defn}\label{443} 
An arrangement in $\bC^3$ is called 
a monomial arrangement associated to 
the group $G(4,4,3)$ (see B.1 of \cite{OT}), 
if it is $\PGL(3,\bC)$-equivalent to 
$(\varphi_{4,4,3}=0)$, where 
$$\varphi_{4,4,3}= 
(x^4-y^4)(y^4-z^4)(z^4-x^4). 
$$ 
\end{defn} 
It is easy to see that 
$\cA=(\varphi_{4,4,3}=0)$ 
satisfies $F(\cA)=[0,16,3]$. 
Therefore, 
$$\cS_{12}=\{ 
\text{monomial arrangements 
associated to the group\ }G(4,4,3)\}\subset\cR_{12}.$$ 
By Proposition \ref{11-12}, we have 
$\cF_{12}=\cS_{12}\sqcup\cI_{12}=\cR_{12}$. 
Thus (1) of Theorem \ref{Main} is verified. 
\end{subsection} 
\end{section} 
\begin{section}{An arrangement with $13$ hyperplanes}\label{13} 
In this section, we construct a free arrangement of 13 planes 
in $\bC^3$ which is neither recursively free or rigid. 
\begin{subsection}{Definition of $\cA$}\label{13def} 
Let $\cA_\lambda'=\{H_1,\ldots,H_{12}\}$ be the line arrangement 
in $\bC^2$, where $H_i$ is defined by $h_i$ below 
for each $1\leq i\leq 12$, with a parameter $\lambda\in\bC$. 
 
\noindent 
\begin{minipage}{0.5\hsize} 
\begin{eqnarray*} 
h_1&=&-\sqrt{3}x-y+{\lambda}+1, 
\\ 
h_2&=&2y+{\lambda}+1, 
\\ 
h_3&=&\sqrt{3}x-y+{\lambda}+1, 
\\ 
h_4&=&\sqrt{3}x-y+{\lambda}-2, 
\\ 
h_5&=&-\sqrt{3}x-y+{\lambda}-2, 
\\ 
h_6&=&2y+{\lambda}-2, 
\\ 
h_7&=&2y-2{\lambda}+1, 
\\ 
h_8&=&\sqrt{3}x-y-2{\lambda}+1, 
\\ 
h_9&=&-\sqrt{3}x-y-2{\lambda}+1, 
\\ 
h_{10}&=&({\lambda}+1)y+\sqrt{3}(1-{\lambda})x-{\lambda}^2+{\lambda}-1, 
\\ 
h_{11}&=&\sqrt{3}{\lambda}x+({\lambda}-2)y-{\lambda}^2+{\lambda}-1, 
\\ 
h_{12}&=&(1-2{\lambda})y-\sqrt{3}x-{\lambda}^2+{\lambda}-1. 
\end{eqnarray*} 
\end{minipage} 
\begin{minipage}{0.5\hsize} 
\begin{center} 
\begin{picture}(180,180)(0,0) 
\setlength\unitlength{0.5pt} 
 \put(10,210){\line(1,0){340}} 
 \put(-20,204){$H_6$} 
 \put(10,187){\line(1,0){340}} 
 \put(-20,179){$H_7$} 
 \put(10,147){\line(1,0){340}} 
 \put(-20,140){$H_2$} 
 \put(65,315){\line(3,-5){170}} 
 \put(230,10){$H_5$} 
 \put(93,315){\line(3,-5){160}} 
 \put(260,40){$H_9$} 
 \put(129,335){\line(3,-5){150}} 
 \put(285,75){$H_1$} 
 \put(124,30){\line(3,5){150}} 
 \put(274,290){$H_4$} 
 \put(96,30){\line(3,5){140}} 
 \put(230,275){$H_8$} 
 \put(66,60){\line(3,5){150}} 
 \put(212,320){$H_3$} 
\qbezier(62,67)(192,167)(322,267) 
 \put(325,275){$H_{11}$} 
\qbezier(94,233)(218,187)(342,141) 
 \put(59,235){$H_{10}$} 
\qbezier(136,336)(156,210)(176,84) 
 \put(165,60){$H_{12}$} 
 \end{picture} 
{{\sc Figure 1.}\refstepcounter{figure} 
\quad$\cA_\lambda'$ with $\lambda=2/3$.} 
\end{center} 
\end{minipage} 
 
\medskip 
 
Note that $\cA_\lambda'$ is symmetric with respect to 
the rotation with angle $-2\pi/3$, which 
defines the action on $\cA_\lambda'$ given by 
$H_{3i}\mapsto H_{3i-2}\mapsto H_{3i-1}\mapsto H_{3i}$. 
Let $\cA_\lambda=\cone\!\cA_\lambda'$ be the cone of $\cA_\lambda'$. 
We denote the infinity line $H_{\infty}$ by $H_{13}$. 
$\cA_\lambda$ is denoted by $\cA$ if there is no confusion. 
 
For generic $\lambda\in\bC$, we see by calculation 
(or by reading off from the figure) that 
\begin{eqnarray*} 
M_2(\cA)&=&\left\{\{1,6,8\},\{2,4,9\},\{3,5,7\}\right\}, 
\\ 
M_3(\cA)&=&\{ 
\{1,5,9,13\}, 
\{2,6,7,13\}, 
\{3,4,8,13\} 
\}\subset H_{13}, 
\\ 
M_4(\cA)&=&\left\{ 
\{1,4,7,10,11\}, 
\{2,5,8,11,12\}, 
\{3,6,9,10,12\} 
\right\}. 
\end{eqnarray*} 
Other intersection points of $\cA$ form $M_1(\cA)$. 
We denote this lattice structure by $L_0$. 
Note that, if $\lambda\in\bQ$, 
$\cA_\lambda$ is defined over $\bQ$ after 
the $\PGL(3,\bC)$-action $[x\mapsto x/\sqrt{3}, y\mapsto y]$. 
Therefore $L_0$ is realized over $\bQ$. 
This section is devoted to prove the following proposition. 
\begin{prop}\label{13Descrip} 
\item[(1)] 
If $\lambda\in\{0,1,-\omega^{\pm1}\}$, then $|\cA_\lambda|<13$. 
\item[(2)] 
If $\lambda\in\{-1,2^{\pm1}\}$, 
then $\cA_\lambda\in\cI_{13}$ with $\exp(\cA_{\lambda})=(1,5,7)$. 
\item[(3)] 
Set $Z=\{0,\pm1,-\omega^{\pm1},2^{\pm1}\}$. 
If $\lambda\in\bC\setminus Z$, then $\cA_\lambda\in\cS_{13}$ 
with $\exp(\cA_{\lambda})=(1,6,6)$. 
\item[(4)] 
The moduli space $\Vc_\bC(L_0)$ of the lattice $L_0$ is 
a one dimensional set $\Vc_\bC(L_0)= \left(\bC\setminus Z\right)/\sim$ 
with each $\lambda\in \Vc_\bC(L_0)$ parametrizing an arrangement 
$\cA_\lambda$, where 
$\sim$ stands for the equivalence relation generated by 
the relations $\lambda\sim\lambda^{-1}$ and $\lambda\sim1-\lambda$. 
\item[(5)] 
The symmetry group of $L_0$ is 
$\Aut(L_0)\cong\bZ/3\bZ\rtimes\fS_3$. 
\item[(6)] 
Set $W=\left\{\pm\sqrt{-1}, 1\pm\sqrt{-1}, (1\pm\sqrt{-1})/2, 
(\pm1\pm\sqrt{5})/2, (3\pm\sqrt{5})/2 \right\}\subset\bC\setminus Z$. 
Then, $\cA_\lambda\in\cS_{13}\setminus \cR_{13}$ if and only if 
$\lambda\in \bC\setminus\left(Z\sqcup W\right)$. 
\end{prop}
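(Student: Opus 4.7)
My plan is to dispatch the six parts roughly in order, with parts (1)--(3) handled by direct computation on the explicit linear forms, parts (4) and (5) by a $\PGL_3(\bC)$-normalization argument, and part (6) as the main technical step.

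For (1), substituting each $\lambda\in\{0,1,-\omega^{\pm 1}\}$ into the list of $h_i$ reveals at least one proportionality (for instance $h_2 = h_7 = 2y+1$ at $\lambda=0$, and $h_7 \propto h_{10}$ at $\lambda = 1$), so $|\cA_\lambda|<13$. For (2), substituting $\lambda\in\{-1,2,1/2\}$ produces additional concurrences (e.g.~at $\lambda=-1$ the lines $H_1,H_2,H_3$ all pass through the origin), which yield a line $H\in\cA_\lambda$ with $n_{\cA_\lambda,H}>5$; since the M\"obius computation for the degenerated lattice gives $\chi(\cA_\lambda,t)=(t-1)(t-5)(t-7)$, Theorem \ref{ABT} applies and an explicit filtration by addition theorems shows $\cA_\lambda\in\cI_{13}$ with $\exp=(1,5,7)$. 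For (3), a M\"obius computation on the lattice $L_0$ directly yields $\chi(\cA_\lambda,t)=(t-1)(t-6)^2$. Since $\max_H n_{\cA_\lambda,H}=6=\min(a,b)$, Theorem \ref{ABT} does not apply directly to $\cA_\lambda$, so I adjoin a well-chosen line $H$ (for example, a line through two of the quintuple points of $M_4$) arranged so that $n_{\cA_\lambda\cup\{H\},H}=7$; the enlarged $14$-arrangement then has $\chi=(t-1)(t-6)(t-7)$, is free with exponents $(1,6,7)$ by Theorem \ref{ABT}, and the deletion half of Theorem \ref{ad} gives $\cA_\lambda\in\cF_{13}$ with $\exp=(1,6,6)$, so $\cA_\lambda\in\cS_{13}$.

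For (4) and (5), I would normalize a general realization of $L_0$ using $\PGL_3(\bC)$: take $H_{13}$ as the line at infinity and fix the three triple points of $M_3(\cA)\subset H_{13}$, which uses up seven of the eight projective degrees of freedom. The remaining incidence equations coming from $M_2(\cA)\cup M_4(\cA)$ then constrain the last degree of freedom to the one-parameter family $\cA_\lambda$ with $\lambda\in\bC$, and the excluded values in $Z$ appear exactly as those where the normalization fails (several linear forms coincide, as already seen in (1) and (2)). The lattice $L_0$ carries an obvious threefold rotation of the affine picture together with an $\fS_3$ permuting the three quintuple points of $M_4$; checking that these exhaust the lattice automorphisms gives $\Aut(L_0)\cong\bZ/3\bZ\rtimes\fS_3$, and tracking the induced action on the normalization parameter reads off as the involutions $\lambda\mapsto\lambda^{-1}$ and $\lambda\mapsto 1-\lambda$, so the moduli space is precisely $(\bC\setminus Z)/{\sim}$.

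Part (6) is the main obstacle. Since $\cA_\lambda\in\cS_{13}$ is free but not inductively free, recursive freeness is equivalent to the existence of either a free $12$-subarrangement $\cA_\lambda\setminus\{H\}$ with $n_{\cA_\lambda\setminus\{H\},H}=7$, or a free $14$-superarrangement $\cA_\lambda\cup\{H\}$ obtained by adjoining a single line. The classification $\cF_{12}=\cI_{12}\sqcup\cS_{12}$ from \S \ref{S12}, together with inspection of the $13$ possible deletions of a line from $\cA_\lambda$, rules out the first possibility. For the addition, each candidate $H$ falls into one of finitely many combinatorial types according to how it meets the intersection points of $L_0$; for each such type, the condition that $\cA_\lambda\cup\{H\}$ be free (via Theorem \ref{ABT} applied to exponents $(1,6,7)$) imposes an explicit polynomial equation in $\lambda$. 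Solving these equations over all combinatorial types yields admissible lines precisely when $\lambda\in W$, and for such $\lambda$ I would exhibit the explicit line $H$ that extends the addition-deletion chain back to a recursively free arrangement of fewer planes. The hardest part is the exhaustive enumeration of combinatorial types of seven-secants of $\cA_\lambda$ and the verification that the resulting polynomial conditions carve out exactly the set $W$ listed in the statement.
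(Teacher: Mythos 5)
Your outline for parts (1), (2), (4), (5), and the overall strategy for (6) is essentially the paper's, but your proposed argument for part (3) has a fatal circularity, and it is the heart of the whole proposition.

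You propose to prove freeness of $\cA_\lambda$ for generic $\lambda$ by adjoining a line $L$ with $n_{\cA_\lambda\cup\{L\},L}=7$, applying Theorem \ref{ABT} to the $14$-arrangement, and then deleting $L$. There are two problems. First, your explicit candidate is wrong: the three quintuple points $\{1,4,7,10,11\}$, $\{2,5,8,11,12\}$, $\{3,6,9,10,12\}$ pairwise lie on $H_{11}$, $H_{10}$, $H_{12}$ respectively, so the line through any two of them is already one of $H_{10},H_{11},H_{12}\in\cA_\lambda$. Second, and more seriously, the existence of \emph{any} line $L\notin\cA_\lambda$ with $n_{\cA_\lambda\cup\{L\},L}=7$ is exactly what part (6) rules out for $\lambda\notin Z\cup W$. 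Indeed, a short $\mu$-count shows $n_{\cA_\lambda\cup\{L\},L}=7$ is the only value making $\chi(\cA_\lambda\cup\{L\},t)$ factor over $\bZ$, so a line $L$ enabling your addition step is precisely a line enabling the first step of a recursive-freeness chain. Part (6) shows no such $L$ exists generically, so your route to (3) cannot work for the very $\lambda$ that matter. The paper sidesteps addition--deletion entirely here: it forms the Ziegler restriction onto $H_\infty$, reduces it to the multiarrangement $u^3v^3(u+v)^3(u+\lambda v)(u+v-\lambda u)(\lambda u+\lambda v-v)$, shows via an explicit $3\times 3$ linear system that its degree-$5$ component vanishes for $\lambda\notin Z$, and then invokes Yoshinaga's criterion (Proposition \ref{yoshinaga}). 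Some such non-inductive freeness test (Yoshinaga, or an explicit Saito basis) is indispensable; you cannot bootstrap from the addition or deletion theorems.

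A smaller remark on (6): you do not need the classification $\cF_{12}=\cI_{12}\sqcup\cS_{12}$ to rule out the deletion branch. Since $\cA_\lambda\in\cS_{13}$ forces $n_{\cA_\lambda,H}\le 6$ for every $H$, and the deletion theorem would need $n_{\cA_\lambda,H}=7$, deletion is immediately impossible. Equivalently, $\chi(\cA_\lambda\setminus\{H\},t)=(t-1)(t^2-11t+31)$ has negative discriminant, so no $12$-subarrangement can even have a factoring characteristic polynomial. Your enumeration of combinatorial types of $7$-secants for the addition branch matches the paper's case analysis on $F_L(\cA_1)\in\{[4,2,0,0,1],[3,3,0,1],[3,2,2],[2,4,1],[1,6]\}$, and is the right idea.
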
 
\end{subsection} 
\begin{subsection}{Degeneration of $\cA$}\label{degen} 
We show (1) and (2) of Proposition \ref{13Descrip}, and 
that $L(\cA_\lambda)=L_0$ for $\lambda\in\bC\setminus Z$. 
 
By the symmetry of the rotation, it is easy to see that 
$|\{H_i\mid 1\leq i\leq 9\}|=9\Leftrightarrow|\{H_2,H_6,H_7\}|=3 
\Leftrightarrow \lambda\neq0,1$. We also see that 
$H_{10}, H_{11}, H_{12}$ are non-parallel to $x$-axis if 
$\lambda\neq0,1$, distinct each other 
if $\lambda\neq-\omega^{\pm1}$, and all coincide 
if $\lambda=-\omega^{\pm1}$, 
where $\omega$ is the primitive third root of unity. 
Therefore it follows that 
$|\cA|=13\Leftrightarrow\lambda\in\bC\setminus\{0,1,-\omega^{\pm1}\}$. 
 
Assume that $\lambda\in\bC\setminus\{0,1,-\omega^{\pm1}\}$. 
The lattice structure is corrupted only when 
the origin lies on $H_i$ for $i=2,6,7,10$. 
The cases $i=2,6,7$ correspond to 
$\lambda=-1,2,1/2$ respectively. 
The case $i=10$ corresponds to 
$\lambda=-\omega^{\pm1}$. 
Therefore we have 
$L(\cA)=L_0$ for $\lambda\in\bC\setminus Z$. 
 
When $\lambda\in\{-1,2^{\pm1}\}$, 
the origin lies on $H_i$ for some $i\in\{2,6,7\}$. 
As the origin gives rise to the new intersection point 
$\{3\lceil i/3\rceil-j\mid0\leq j\leq2\}\in M_2(\cA)$, 
we have $F(\cA)=[18,4,3,3]$. Since $\mu_{\cA}=47$, we have 
$\chi(\cA,t)=(t-1)(t-5)(t-7)$. 
We set $\{i,j_1,j_2\}=\{2,6,7\}$, 
$\cA_1=\cA\setminus\{H_{j_1}\}$, 
$\cA_2=\cA_1\setminus\{H_{j_2}\}$ and 
$\cA_3=\cA_2\setminus\{H_{\infty}\}$. 
Observe that 
$n_{\cA,H_{j_1}}=n_{\cA_1,H_{j_2}}= 
n_{\cA_2,H_{\infty}}=6$ and 
$\cA_3$ is super solvable. 
Therefore we conclude that $\cA\in\cI_{13}$ 
with $\exp(\cA)=(1,5,7)$. 
\end{subsection} 
\begin{subsection}{Freeness of $\cA$} 
We show (3) of Proposition \ref{13Descrip}. 
 
Since $L(\cA)=L_0$, it follows that 
$F(\cA)=[21,3,3,3]$, $n_{\cA,H}=6$ for any $H\in\cA$, 
and $\chi({\cA},t)=(t-1)(t-6)^2$. 
Therefore, we have only to show that $\cA\in\cF_{13}$. 
We show it in terms of Yoshinaga's criterion in \cite{Y}. 
 
Let 
$(\cA'',m)$ be 
the Ziegler restriction of $\cA$ onto $H_\infty$. 
It is defined by 
$$ 
y^3 (-\sqrt{3}x-y)^3(\sqrt{3}x-y)^3 \{ 
(\lambda+1)y+\sqrt{3}(1-\lambda)x)\} 
\{ 
\sqrt{3}\lambda x+(\lambda-2)y\} 
\{ 
(1-2\lambda)y-\sqrt{3}x\}=0. 
$$ 
By the change of coordinates 
$$ 
u=y+\sqrt{3}x,\ v=y-\sqrt{3}x, 
$$ 
the defining equation of $(\cA'',m)$ becomes 
$$ 
u^3v^3(u+v)^3 (u+\lambda v)(u+v-\lambda u)(\lambda u+\lambda v-v)=0. 
$$ 
Now recall the following. 
 
\begin{prop}[\cite{Y}]\label{yoshinaga} 
Let $\cB$ be a central arrangement in $\bC^3$, $H \in \cB$ and 
let $(\cB'',m)$ be the Ziegler restriction of $\cB$ onto $H$. Assume that 
$\chi(\cB,t)=(t-1)(t-a)(t-b)$, and 
$\exp(\cB'',m)=(d_1,d_2)$. 
Then $\cB$ is free if and only if 
$ab=d_1d_2$. 
\end{prop}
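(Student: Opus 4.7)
The proof splits into two directions. First I would set up the compatibility between $D(\cB)$ and the Ziegler restriction $D(\cB'',m)$: the restriction-along-$H$ map sends a logarithmic derivation $\theta\in D(\cB)$ tangent to $H$ to an element of $D(\cB'',m)$ (the multiplicities on $(\cB'',m)$ are defined precisely so that this map is well-defined), with kernel essentially $\alpha_H\cdot D(\cB)$ modulo the Euler direction. Since every multiarrangement on $\bC^2$ is free, $D(\cB'',m)$ is free of rank $2$, and Saito's criterion for multiarrangements gives $d_1+d_2=|\cB|-1$; from the hypothesis $\chi(\cB,t)=(t-1)(t-a)(t-b)$ we also have $a+b=|\cB|-1$. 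Hence the numerical condition $ab=d_1d_2$ is equivalent to $\{d_1,d_2\}=\{a,b\}$ as multisets, and the theorem is really the statement that $\cB$ is free if and only if the Ziegler exponents of $(\cB'',m)$ coincide with $(a,b)$.

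For the easy direction ($\Rightarrow$), assume $\cB$ is free. Then Terao's factorization (Theorem \ref{factorization}) forces $\exp(\cB)=(1,a,b)$. Using the restriction map above combined with Saito's criterion in dimensions three and two, one verifies that the induced map from $D(\cB)/\alpha_H D(\cB)$, after splitting off the Euler direction, into $D(\cB'',m)$ is an isomorphism of graded modules; tracking degrees, the two graded generators have degrees $(a,b)$, whence $(d_1,d_2)=(a,b)$ up to order and $d_1d_2=ab$. This is essentially Ziegler's pairing.

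For the hard direction ($\Leftarrow$), I would sheafify: associate to $D(\cB)$ the reflexive sheaf $\widetilde{D(\cB)}$ on $\bP^2$ and pass to its rank-$2$ traceless quotient $\widetilde{D_0(\cB)}$, whose Chern classes $(c_1,c_2)$ are read off combinatorially from $\chi(\cB,t)$. By the very construction of the Ziegler restriction, the splitting type of this sheaf on the generic line $H$ is $(d_1,d_2)$. The equality $a+b=d_1+d_2$ fixes $c_1$ automatically, while the further equality $ab=d_1d_2$ is exactly the statement $c_2=d_1d_2$; a rank-$2$ reflexive sheaf on a smooth surface is locally free precisely when this matching holds, and a locally free sheaf on $\bP^2$ whose generic splitting type is $(d_1,d_2)$ and whose Chern classes match forces a global splitting $\mathcal{O}(-a)\oplus\mathcal{O}(-b)$, which lifts to freeness of $D(\cB)$ as an $S$-module. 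The delicate step — the cohomological vanishing that upgrades the matching of numerical invariants to actual local freeness of the reflexive sheaf — is the heart of Yoshinaga's argument in \cite{Y} and is precisely where I expect the main technical obstacle to lie.
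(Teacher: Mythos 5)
The paper does not prove Proposition \ref{yoshinaga}; it simply cites it to \cite{Y} and uses it as a black box, so there is no internal proof to compare against and I can only assess your sketch on its own merits. Your framing is right: since $d_1+d_2=|m|=|\cB|-1=a+b$, the condition $ab=d_1d_2$ is equivalent to $\{a,b\}=\{d_1,d_2\}$, and the forward direction (Terao's factorization gives $\exp(\cB)=(1,a,b)$, and Ziegler's theorem then gives $\exp(\cB'',m)=(a,b)$) is correctly identified.

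The converse, however, contains a genuine conceptual error. You locate the delicate step in proving that the reflexive sheaf $\widetilde{D_0(\cB)}$ is locally free once $c_2=d_1d_2$. But for a rank-two reflexive sheaf on a smooth surface, local freeness is automatic: reflexivity gives depth at least two at every stalk, and Auslander--Buchsbaum over the regular two-dimensional local rings of $\bP^2$ forces freeness. So $\widetilde{D_0(\cB)}$ is always a vector bundle, whether or not $\cB$ is free, and the Chern-class condition plays no role in that. What actually has to be proved---and is the entire content of the proposition---is that this bundle \emph{splits} globally as $\mathcal{O}(-a)\oplus\mathcal{O}(-b)$, which is not a local-freeness statement. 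Compounding this, $H$ is a fixed hyperplane of $\cB$, not a generic line of $\bP^2$, so $(d_1,d_2)$ is a priori only the splitting type on a \emph{special} line; to deduce global splitting you would need either a splitting criterion for restriction to a special line, or a semicontinuity argument relating $(d_1,d_2)$ to the generic splitting type, and you supply neither. For the record, Yoshinaga's own proof in \cite{Y} does not sheafify in this way: he studies the restriction map from the Euler-free part of $D(\cB)$ into $D(\cB'',m)$, shows it is injective with cokernel controlled by a local-global comparison of Hilbert series, and uses $ab=d_1d_2$ to force surjectivity. The sheaf-theoretic reformulation you aim at (closer to the \cite{FV} viewpoint) can be carried out, but not via the lemma as you have stated it.
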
 
 
Also, recall that, 
for a central multiarrangement $(\cC,m)$ in $\bC^2$ 
with $\exp(\cC,m)=(e_1,e_2)$ $(e_1 \leq e_2)$, 
it holds that $e_1+e_2=|m|=\sum_{H \in \cC} m(H)$ and 
$e_1=\min_{d \in \bZ} \{d \mid D(\cC,m)_d \neq 0\}$. 
This follows from the fact that 
$D(\cC,m)$ is a rank two free module. For example, see \cite{A}. 
 
Now since $\chi(\cA,t)=(t-1)(t-6)^2$, 
it suffices to show that 
every homogeneous derivation of degree five is zero. 
 
Assume that $\theta \in D(\cA'',m)$ is homogeneous of 
degree five and show that $\theta=0$. 
To check it, first, let us introduce a submultiarrangement $(\cB,m')$ 
of $(\cA'',m)$ defined by 
$$ 
u^3v^3 (u+v)^3=0. 
$$ 
The freeness of $(\cB,m')$ is well-known. 
In fact, we can give its explicit basis as follows. 
$$ 
\partial_1=(u+2v)u^3\partial_u- 
(2u+v)v^3\partial_v,\ 
\partial_2=(u+3v)vu^3 \partial_u+ 
(3u+v)uv^3 \partial_v. 
$$ 
So $\exp(\cB,m')=(4,5)$. Since $D(\cB,m') \supset D(\cA'',m)$, 
there are scalars $a,b,c \in \bC$ such that 
$$ 
\theta=(au+bv)\partial_1+c\partial_2. 
$$ 
The scalars $a,b$ and $c$ are determined by the tangency conditions 
to the three lines 
$u+\lambda v=0,\ u+v-\lambda u=0,\ \lambda u+\lambda v-v=0$. 
Then a direct computation shows that $a,b$ and $c$ satisfy 
$$ 
\lambda(\lambda-1) 
\begin{pmatrix} 
\lambda & -1 &-\lambda(\lambda+1)\\ 
1 & \lambda-1 & -(\lambda-1)(\lambda-2)\\ 
\lambda-1 & -\lambda & -\lambda(\lambda-1)(2\lambda-1) 
\end{pmatrix} 
\begin{pmatrix} 
(\lambda^2-\lambda+1)a\\ 
(\lambda^2-\lambda+1)b\\ 
c 
\end{pmatrix}=0. 
$$ 
The above linear equations imply that 
$$ 
-\lambda(\lambda-1) 
(\lambda-2)(2\lambda-1)(\lambda+1)(\lambda^2-\lambda+1) 
\begin{pmatrix} 
(\lambda^2-\lambda+1)a\\ 
(\lambda^2-\lambda+1)b\\ 
c 
\end{pmatrix}=0. 
$$ 
Since $\lambda\in\bC\setminus Z$, it holds that $a=b=c=0$. 
Hence 
$\theta=0$, that is to say, $D(\cA'',m)_5=0$. 
Now apply 
Proposition \ref{yoshinaga} to 
show that $\cA$ is free with $\exp(\cA)=(1,6,6)$. 
 
\medskip 
 
We can also construct the basis 
of $D(\cA)$ explicitly and give 
an alternative proof of the freeness of $\cA$ 
by Theorem \ref{saito}. 
However, we omit to describe it here 
because of its lengthy. 
\end{subsection} 
\begin{subsection}{Moduli of the lattice $L_0$}\label{13moduli} 
We show (4) and (5) of Proposition \ref{13Descrip}. 
 
Since three parallel affine lines of $\cA$ are transferred 
each other by the rotation, the ratio of the distances among them 
coincides up to its ordering. 
Looking at the equations $h_2$, $h_6$, $h_7$, 
we calculate the ratio as follows. 
$$ 
\left\{(\lambda+1)-(\lambda-2)\right\}: 
\left\{(\lambda-2)-(-2\lambda+1)\right\}: 
\left\{(-2\lambda+1)-(\lambda+1)\right\} 
=(-1):(1-\lambda):\lambda. 
$$ 
 
We assume that 
$\cA_{\lambda}$ is transferred to $\cA_{\lambda'}$ 
by a $\PGL(3,\bC)$-action $\rho$ and 
determine the relation between $\lambda$ and $\lambda'$. 
Note that $H_{13}$ is distinguished among $\cA$ by the 
feature $F_{H_{13}}(\cA)=[3,0,3]$. Thus we may assume 
that $\rho$ preserves $H_{13}=H_{\infty}$ and hence 
the ratio above up to the ordering. Therefore we have 
$\lambda'\in S_{\lambda}$ where 
$$S_{\lambda}=\left\{ 
\lambda,\lambda^{-1},1-\lambda, 
(1-\lambda)^{-1}, 1-\lambda^{-1}, 
(1-\lambda^{-1})^{-1} 
\right\}.$$ 
In fact, any $\lambda'\in S_{\lambda}$ gives $\cA_{\lambda'}$ 
equivalent to $\cA_\lambda$. 
Actually, $\cA_{\lambda^{-1}}$ is realized by 
$\rho_1=[(x,y)\mapsto(-\lambda x,\lambda y)]$, which sends $H_i$ to 
$H_{\sigma_1(i)}$ where $\sigma_1=(1,3)(4,9)(5,8)(6,7)(11,12)$, while 
$\cA_{1-\lambda}$ is realized by $\rho_2=[(x,y)\mapsto(x,-y)]$, 
which sends $H_i$ to $H_{\sigma_2(i)}$ where 
$\sigma_2=(1,4)(2,6)(3,5)(8,9)(10,11)$. 
Composing them, we also have the realizations for 
$\lambda'\in\{(1-\lambda)^{-1}, 1-\lambda^{-1}, (1-\lambda^{-1})^{-1}\}$. 
 
Now it is shown that the set of $\cA_\lambda$ parametrized by 
$\left(\bC\setminus Z\right)/\sim$, 
where $\lambda\sim\lambda'$ holds if and only if 
$\lambda'\in S_{\lambda}$, 
forms a non-trivial one dimensional family of 
the realizations of the lattice $L_0$. 
We show that, it gives the whole moduli space of 
the realizations of $L_0$ 
by determining all the realizations up to the 
$\PGL(3,\bC)$-action. 
 
We may set $H_{13}$ as the infinity line $H_{\infty}$, 
$h_1=x$, 
$h_2=y-1$, 
$h_5=x-1$, 
$h_6=y$, 
and 
$$ 
\{1,4,7,10,11\}=(0,p), 
\quad 
\{3,6,9,10,12\}=(q,0). 
$$ 
Then we have 
$\{1,6,8\}=(0,0)$, 
$\{2,5,8,11,12\}=(1,1)$ 
and 
$$ 
h_8=x-y,\ 
h_{10}=px+qy-pq,\ 
h_{11}=(p-1)x+y-p,\ 
h_{12}=x+(q-1)y-q. 
$$ 
Since 
$(q,0)\in H_3\parallel H_8$, 
$(0,p)\in H_4\parallel H_8$, 
$(0,p)\in H_7\parallel H_6$ and 
$(q,0)\in H_9\parallel H_1$, 
we have 
$$ 
h_3=x-y-q,\ 
h_4=x-y+p,\ 
h_7=y-p,\ 
h_9=x-q. 
$$ 
Since $\{2,4,9\}=(q,1)\in H_4$ and 
$\{3,5,7\}=(1,p)\in H_3$, we have 
$p+q=1$.  Thus all the realizations are 
expressed in terms of at most one parameter. 
 
Now set $p=1-\lambda$, $q=\lambda$ and 
apply the following $\PGL(3,\bC)$-action 
$$\left[ 
x\mapsto -( \sqrt{3}x+y -\lambda-1 )/3,\ 
y\mapsto -\left( 2y+\lambda-2 \right)/3 
\right].$$ 
Then we can recover the equations 
$\{h_i\mid 1\leq i\leq 12\}$ 
in \S \ref{13def}. 
Therefore it is verified that 
$\Vc_\bC(L_0)=\left(\bC\setminus Z\right)/\sim$. 
Now it is easy to see that the symmetric group of $L_0$ is 
given by the rotations and the equivalence relations. 
The former corresponds to $\bZ/3\bZ$, and the latter 
corresponds to $\fS_3$. 
\end{subsection} 
\begin{subsection}{Non-recursive freeness of $\cA$} 
We show (6) of Proposition \ref{13Descrip}. 
\begin{step} 
We show that $\cA\in\cR_{13}$ if $\lambda\in W$. 
\end{step} 
Note that $W$ is the union of the equivalent classes of 
$\sqrt{-1}$ and $(1+\sqrt{5})/2$ with respect to 
the equivalence relation $\sim$ in \S \ref{13moduli}. 
Thus we have only to consider the cases when 
$\lambda=\sqrt{-1}$ or $(1+\sqrt{5})/2$. 
 
In the case when $\lambda=\sqrt{-1}$, we 
set $H_{14}=\overline{\{1,2\}\{5,10\}}$ and 
$\cA_1=\cA\cup\{H_{14}\}$.  Then, 
the intersection points on $H_{14}$ is calculated as follows. 
$$ 
\{3,4,8,13,14\}, \{1,2,14\}, \{5,10,14\}, \{7,12,14\}, 
\{6,14\}, \{9,14\}, \{11,14\}. 
$$ 
We set 
$\cA_2=\cA_1\setminus\{H_{6}\}$, 
$\cA_3=\cA_2\setminus\{H_{9}\}$, 
$\cA_4=\cA_3\setminus\{H_{1}\}$ and 
$\cA_5=\cA_4\setminus\{H_{13}\}$. 
Then it is easy to see that 
$n_{\cA_1,H_{6}}=n_{\cA_2,H_{9}}=7$ and 
$n_{\cA_3,H_{1}}=n_{\cA_4,H_{13}}=6$. 
Now, since $\exp(\cA)=(1,6,6)$, 
we have $\cA_i\in\cF_{15-i}$ for $1\leq i\leq 5$ 
by Theorem \ref{ad}. 
Since $\cA_5\in\cF_{10}=\cI_{10}$ by 
Proposition \ref{9-10}, it follows that $\cA\in\cR_{13}$. 
 
In the case when $\lambda=(1+\sqrt{5})/2$, we set 
$H_{14}=\overline{\{1,4\}\{2,3\}}$ and 
$\cA_1=\cA\cup\{H_{14}\}$.  Then, 
the intersection points on $H_{14}$ is calculated as follows. 
$$ 
\{1,4,7,10,11,14\}, \{2,3,14\}, \{5,6,14\}, \{8,14\}, 
\{9,14\}, \{12,14\}, \{13,14\}. 
$$ 
We set 
$\cA_2=\cA_1\setminus\{H_{9}\}$, 
$\cA_3=\cA_2\setminus\{H_{12}\}$, 
$\cA_4=\cA_3\setminus\{H_{13}\}$ and 
$\cA_5=\cA_4\setminus\{H_{2}\}$. 
Then it is easy to see that 
$n_{\cA_1,H_{9}}=n_{\cA_2,H_{12}}=7$ and 
$n_{\cA_3,H_{13}}=n_{\cA_4,H_{2}}=6$. 
Now, since $\exp(\cA)=(1,6,6)$, 
we have $\cA_i\in\cF_{15-i}$ for $1\leq i\leq 5$ 
by Theorem \ref{ad}. 
Since $\cA_5\in\cF_{10}=\cI_{10}$ by 
Proposition \ref{9-10}, it follows that $\cA\in\cR_{13}$. 
 
Therefore we conclude that $\cA\in\cR_{13}$ if 
$\lambda\in W$. 
 
\medskip 
 
Next, we 
assume that $\cA\in\cR_{13}$ 
for some 
$\lambda\in\bC\setminus\left(Z\cup W\right)$ 
and deduce the contradiction. 
Recall that $\cA\in\cS_{13}$ with $\exp(\cA)=(1,6,6)$. 
Since $\cA\in\cR_{13}$, there exists a line 
$L\subset\bP_{\bC}^2$ such that 
$\cA_1=\cA\cup\{L\}\in\cF_{14}$. 
Note that $n_{\cA_1,L}=7$ by Theorem \ref{ad}. 
 
\begin{step}\label{FLcand} 
We show that 
$F_L(\cA_1)\in\left\{[4,2,0,0,1], [3,3,0,1], [3,2,2], [2,4,1], [1,6]\right\}$. 
\end{step} 
Let $P,Q\in\bigcup_{i=2}^4M_i(\cA)$ with $P\neq Q$. 
Then $\overline{PQ}\in\cA$ unless $P,Q\in M_2(\cA)$. 
Also note that $3$ points of $M_2(\cA)$ are not collinear 
since $\lambda\neq-\omega^{\pm1}$. 
Therefore we have $\sum_{i=3}^5F_{L,i}(\cA_1)\leq2$, 
where the equality holds only when $F_{L,3}(\cA_1)=2$. 
Since $n_{\cA_1,L}=7$ and $\mu_{\cA_1,L}=13$, we have 
only $5$ possibilities of $F_L(\cA_1)$ appearing 
in the assertion. 
 
In the rest of this subsection, we show that each possibilities 
of $F_L(\cA_1)$ above cannot occur. For the later use, 
we present the list of the elements of $M_1(\cA)$ explicitly as follows. 
$$M_1(\cA)=\left\{\begin{array}{lllllll} 
 \{1,2\}, &\{2,10\}, &\{4,5\}, &\{5,10\}, &\{7,8\}, &\{8,10\}, &\{10,13\}, 
\\ 
 \{2,3\}, &\{3,11\}, &\{5,6\}, &\{6,11\}, &\{8,9\}, &\{9,11\}, &\{11,13\}, 
\\ 
 \{3,1\}, &\{1,12\}, &\{6,4\}, &\{4,12\}, &\{9,7\}, &\{7,12\}, &\{12,13\} 
\end{array}\right\} \leqno{(\ast)}$$ 
\begin{step} 
We analyze the case when $F_L(\cA_1)=[4,2,0,0,1]$. 
\end{step} 
Since $|L\cap M_4(\cA)|=F_{L,5}(\cA_1)=1$, 
we may assume $P=\{1,4,7,10,11\}\in L$ by symmetry. 
Since $L\not\in\cA$, we have 
$L\cap M_1(\cA)\subset\{\{2,3\}, \{5,6\}, \{8,9\}, \{12,13\}\}$. 
Note that $\sigma_2$ in \S \ref{13moduli} 
permutes $\{2,3\}$ and $\{5,6\}$, 
while the composition of the rotation and $\sigma_1$ in 
\S \ref{13moduli} permutes $\{5,6\}$ and $\{8,9\}$. 
Since $|L\cap M_1(\cA)|=F_{L,2}(\cA_1)=2$, we may assume 
that $L=\overline{PQ}$ where $Q=\{2,3\}$ and that 
$\{5,6\}\in L$ or $\{12,13\}\in L$. 
By calculation, we see 
that $\{5,6\}\in L$ implies $\lambda=(1\pm\sqrt{5})/2$ 
and 
that $\{12,13\}\in L$ implies $\lambda=1\pm\sqrt{-1}$. 
Since $\lambda\not\in W$, both cases cannot occur. 
Therefore we conclude that $F_L(\cA_1)\neq[4,2,0,0,1]$. 
 
Before looking into other cases, we give a useful observation. 
\begin{step}\label{obs3} 
We show that if 
$P_i\in H_{9+i}\cap M_1(\cA)$ for $1\leq i\leq 3$, 
then $\{P_i\mid 1\leq i\leq 3\}\not\subset L$. 
\end{step} 
Assume that $\{P_i\mid 1\leq i\leq 3\}\subset L$. 
Recall that $L\not\in\cA$. 
By the actions of the rotation and $\sigma_1,\sigma_2$ 
in \S \ref{13moduli},  we may assume 
$P_1=\{2,10\}$, $P_2=\{6,11\}$ and 
$P_3\in\{\{7,12\},\{12,13\}\}$. 
By calculation, we see that 
$\{7,12\}\in\overline{P_1P_2}$ or 
$\{12,13\}\in\overline{P_1P_2}$ 
imply $\lambda\in\{1/2,-\omega^{\pm1}\}\subset W$, 
a contradiction. 
Thus we conclude that $\{P_i\mid1\leq i\leq 3\}\not\subset L$. 
\begin{step} 
We analyze the case when $F_L(\cA_1)=[3,3,0,1]$. 
\end{step} 
Since $F_{L,4}(\cA_1)=1$, we may assume 
$P=\{2,6,7,13\}\in L$ by symmetry. It follows that 
$$ 
L\cap M_1(\cA)\subset\left\{ 
\{1,3\},\{4,5\},\{8,9\}, 
\{1,12\},\{3,11\}, 
\{4,12\},\{5,10\}, 
\{8,10\},\{9,11\} 
\right\}. 
$$ 
Considering the actions of $\sigma_1$, $\sigma_2$ in \S \ref{13moduli} 
and using Step \ref{obs3} and $F_{L,2}(\cA_1)=3$, we may assume that 
$\left\{\{1,3\},\{4,5\}\right\}\subset L\cap M_1(\cA)$ or 
$\left\{\{1,3\},\{5,10\}\right\}\subset L\cap M_1(\cA)$. 
Since $H_2$ and $\overline{\{1,3\}\{4,5\}}$ are parallel to 
$x$-axis and $y$-axis respectively, the former cannot occur. 
Thus we have $L=\overline{\{1,3\}\{5,10\}}$. 
Since $P\in L$, by calculation, we have 
$\lambda=1/2\in W$, a contradiction. 
Therefore we conclude that $F_L(\cA_1)\neq[3,3,0,1]$. 
\begin{step} 
We analyze the case when $F_L(\cA_1)=[3,2,2]$. 
\end{step} 
Since $F_{L,3}(\cA_1)=2$, we may assume that 
$L\cap M_2(\cA)=\{\{1,6,8\}\{2,4,9\}\}$ 
by the symmetry of the rotation.  Therefore 
we have $L=\overline{\{1,6,8\}\{2,4,9\}}$ and 
$$ 
L\cap M_1(\cA)\subset\left\{ 
\{3,11\}, 
\{5,10\}, 
\{7,12\}, 
\{10,13\}, 
\{11,13\}, 
\{12,13\} 
\right\}. 
$$ 
Note that 
$\sigma_2$ in \S \ref{13moduli} permutes 
$\{3,11\}$ and $\{5,10\}$, 
while the composition of the rotation and 
$\sigma_1$ in \S \ref{13moduli} permutes 
$\{5,10\}$ and $\{7,12\}$. 
Since $F_{L,2}(\cA_1)=2$, we may assume that 
$\{3,11\}\in L\cap M_1(\cA)$. 
By calculation, it follows that 
$\lambda=\pm\sqrt{-1}\in W$, a contradiction. 
Therefore we conclude that 
$F_L(\cA_1)\neq[3,2,2]$. 
\begin{step} 
We analyze the case when $F_L(\cA_1)=[2,4,1]$. 
\end{step} 
Since $F_{L,3}(\cA_1)=1$, we may assume that 
$P=\{\{1,6,8\}\}\in L\cap M_2(\cA)$ 
by the symmetry of the rotation.  Therefore we have 
$$L\cap M_1(\cA)\subset\left\{\{2,3\}, \{4,5\}, \{9,7\}\right\} 
\cup H_{10}\cup H_{11}\cup H_{12}.$$ 
By Step \ref{obs3} and $F_{L,2}(\cA_1)=4$, we see that 
$\left|L\cap\left\{\{2,3\}, \{4,5\}, \{9,7\}\right\}\right|\geq2$. 
Note that 
the composition of the rotation and $\sigma_2$ in \S \ref{13moduli} 
permutes $\{2,3\}$ and $\{4,5\}$, 
while the composition of the rotation and 
$\sigma_1$ in \S \ref{13moduli} 
permutes $\{4,5\}$ and $\{9,7\}$. 
Thus we may assume 
$L=\overline{\{2,3\}\{4,5\}}$. 
By calculation, we see that 
$P\in L$ implies $\lambda=(1\pm\sqrt{5})/2\in W$, 
a contradiction. 
Therefore we conclude that 
$F_L(\cA_1)\neq[2,4,1]$. 
\begin{step} 
We analyze the case when $F_L(\cA_1)=[1,6]$. 
\end{step} 
Let $U_i$ be the set of points 
appearing in the $i$-th column of 
the list $(\ast)$ in Step \ref{FLcand} 
for $1\leq i\leq 7$. 
By Step \ref{obs3}, we have 
$\left|L\cap\bigcup_{i=2,4,6,7}U_i\right|\leq2$. 
Since $L\not\in\cA$, we have 
$|L\cap U_i|\leq1$ for $i=1,3,5$. 
Thus it follows that 
$6=F_{L,2}(\cA_1)=\left|L\cap M_1(\cA)\right|\leq 2+3=5$, 
which is a contradiction. 
Therefore we conclude that 
$F_L(\cA_1)\neq[1,6]$. 
 
\medskip 
 
Now the proof of $\cA\not\in\cR_{13}$ for 
$\lambda\in\bC\setminus\left(Z \cup W\right)$ 
is completed. 
\end{subsection} 
\end{section} 
\begin{section}{An arrangement with $15$ hyperplanes}\label{15} 
In this section we present a further example of a free but not 
recursively free arrangement whose intersection lattice defines 
a one dimensional moduli space. One obtains this arrangement for 
example as a deformation of a simplicial arrangement or as a 
subarrangement of a restriction of a reflection arrangement. 
But it also showed up in several other experiments. 
 
In contrast to all other results of this paper, we prove all the 
properties of this intersection lattice with the computer. 
The techniques presented here may also be used to verify all the 
claims of Section \ref{13}. 
For instance, the following algorithm may be used to prove that 
all arrangements in a given moduli space are free. 
 
\algo{ModuliSpaceIsFree}{$L$}{Checks that all arrangements in $\Vc(L)$ are free} 
{The intersection lattice $L=L(\cA)$ of a free arrangement $\cA$.} 
{``True'' if all realizations of $L$ are free, ``Terao's open problem 
is false'' else.} 
{ 
\item Realize the moduli space as a locally closed set 
(use the algorithm in \cite{p-C10b}): Construct ideals $I_0,J$ such that 
the moduli space is $\Vc(I_0)\setminus \Vc(J)$, preferably $\Vc(I_0)$ 
should be as small as possible. 
\item Compute a primary decomposition of $I_0$. 
\item If $I_0$ has dimension zero and is a primary ideal, then return 
``True'' (the lattice is rigid). 
\item Perform the next steps for each prime $I$ in the decomposition of $I_0$. 
\item We now have a realization $\cA_K$ of the intersection lattice over 
the field of fractions $K$ of the coordinate ring of $I$ (this is 
an integral domain since $I$ is prime). 
\item If $\cA_K$ is not free in $K^r$, then return ``Terao's 
open problem is false''. 
\item If $\cA_K$ is free in $K^r$, then compute a basis $B$ of 
its logarithmic derivation module. 
\item Compute the Saito determinant $d$ of $B$; this is an element 
of $K$, i.e., a rational function modulo $I$ in the variables defining $I$. 
\item Include the information that $d/Q(\cA_K)$ is well defined and 
non-zero into the ideal $I$, and iterate the above until $I$ is zero dimensional. 
\item The zero set of $I$ is now finite. Exclude the solutions which 
give the wrong intersection lattice (they occur because we have ignored the ideal $J$). 
\item If all remaining solutions are free, return ``True'', 
otherwise return ``Terao's open problem is false''. 
} 
 
Let us now define the desired intersection lattice with 15 planes. 
Define $R\subseteq \bZ[t]^3$ as 
\begin{eqnarray*} 
R &=& \{ (1,0,0),(1,1,0),(1,0,1),(1,1,1),(1,t,1),(0,1,0),(2,1,1),\\ 
&& (t+1,t,1),(t+1,1,1),(2t,t,1),(1,-t+1,1),\\ 
&& (-3t+1,t^2-3t+1,-t),(3t-1,t,t),\\ && (-3t+1,-t^2,-t),(3t-1,2t-1,t) \}. 
\end{eqnarray*} 
We denote $\cA_t = \{\ker_{\bQ(t)} \alpha\mid \alpha \in R\}$ the arrangement defined by $R$ in the space $\bQ(t)^3$. 
For any field extension $K/\bQ$ and each $\lambda \in K$, we also have a corresponding arrangement 
\[ \cA_\lambda:=\{ \ker_K \alpha(\lambda) \mid \alpha\in R\} \] 
in $K^3$. 
\begin{figure}[hbt] 
\begin{center} 
\setlength{\unitlength}{0.75pt} 
\begin{picture}(400,400)(100,200) 
\moveto(496.773982019981493284007282848,364.222912360003364857453221300) 
\lineto(103.226017980018506715992717152,364.222912360003364857453221300) 
\moveto(499.799899899874824737086828568,408.944271909999158785636694675) 
\lineto(100.200100100125175262913171431,408.944271909999158785636694675) 
\moveto(495.715111622386654550161466794,441.177604139103725326976197484) 
\lineto(104.284888377613345449838533206,441.177604139103725326976197484) 
\moveto(462.003080212687308662930982098,517.281720662691514734747508724) 
\lineto(182.718279337308485265252491277,237.996919787312691337069017902) 
\moveto(441.421356237309504880168872421,541.421356237309504880168872421) 
\lineto(158.578643762690495119831127579,258.578643762690495119831127579) 
\moveto(406.952750824954505690978417484,569.000322754056281424622386528) 
\lineto(130.999677245943718575377613472,293.047249175045494309021582516) 
\moveto(308.944271909999158785636694675,599.799899899874824737086828568) 
\lineto(308.944271909999158785636694675,200.200100100125175262913171431) 
\moveto(367.958188185589464486680994098,211.899801545739966944654699262) 
\lineto(223.491767387609719016567995648,584.787689910687449561575785165) 
\moveto(444.811903076327848709446111418,262.052500104524442500854566083) 
\lineto(173.745865467501343672573169961,555.112518880487948495621622922) 
\moveto(499.968602029388036590795160647,396.456244589107931398792724109) 
\lineto(100.031397970611963409204839353,396.456244589107931398792724109) 
\moveto(432.492660930138327285243171407,549.818873309244309090703667076) 
\lineto(150.181126690755690909296332923,267.507339069861672714756828593) 
\moveto(422.256045515541062845432718669,241.717154009343286710386751204) 
\lineto(151.716113756179965018531360123,534.208379322715313821763695360) 
\moveto(482.566108371663406313081719590,481.667717453263334783894740716) 
\lineto(113.729449702885217465211784879,327.176363095420508928667332648) 
\moveto(279.130032210001949593332228440,598.908130664496578018818014653) 
\lineto(279.130032210001949593332228440,201.091869335503421981181985346) 
\thinlines 
\strokepath 
\put(470,560){\Large $\infty$} 
\end{picture} 
\end{center} 
\caption{An arrangement $\cA_\lambda$ with $15$ hyperplanes.} 
\end{figure}
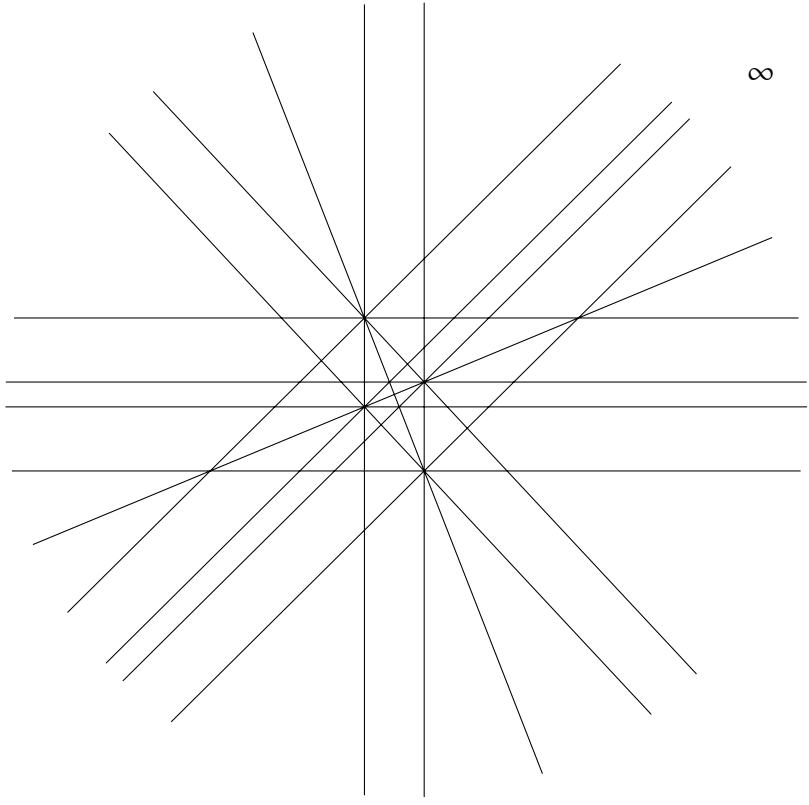 
\begin{prop} 
The intersection lattice $L(\cA_t)$ has the following properties. 
\item[(1)] 
The moduli space of $L(\cA_t)$ is one dimensional. More precisely, 
$$\Vc_\bC(L(\cA_t)) = \{ \cA_\lambda \mid \lambda \in \bC\setminus Z\} 
\quad\text{where}\quad 
Z=\left\{0, 1, \frac{1}{2},\frac{3}{2}\pm \sqrt{2},\frac{-1\pm \sqrt{5}}{2}\right\}.$$ 
\item[(2)] 
If $\lambda\in \{0, 1, 1/2\}$ then $|\cA_t|\neq|\cA_\lambda|$. 
\item[(3)] 
If 
$\lambda\in \left\{(3\pm2\sqrt{2})/2,(-1\pm \sqrt{5})/2\right\}$ 
then $|\cA_t|=|\cA_\lambda|$. However, $L(\cA_\lambda)$ is the intersection lattice of the simplicial arrangement $\cA(15,1)$ in Gr\"unbaum's notation (see \cite{p-G-09}). 
Thus in this case, the arrangements $\cA_\lambda$ are free with exponents $\lmultiset 1,5,9\rmultiset$. 
\item[(4)] 
The arrangement $\cA_t$ is free but not recursively free with exponents $\lmultiset 1,7,7\rmultiset$. 
\item[(5)] 
The arrangements $\cA_\lambda$ for $\lambda\in \{-1, 1/3, 2, (1\pm\sqrt{-1})/2, 2/3\}$ are recursively free. 
All other arrangements $\cA_\lambda$ with $\lambda\notin Z$ are free but not recursively free. 
\item[(6)] 
The symmetry group of $L(\cA_t)$ is a wreath product, $\Aut(L(\cA_t)) \cong \bZ/2\bZ \wr \fS_3$, thus it is isomorphic to the reflection group of type $B_3$. 
\end{prop}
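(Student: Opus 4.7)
The plan is to treat each of the six claims via the algorithmic framework laid out at the beginning of the section, using computer algebra for the explicit ideal-theoretic and freeness computations. For (1), I would realize $\Vc(L(\cA_t))$ following the construction in \cite{p-C10b}: after normalizing by $\PGL_3(\bC)$ to fix four hyperplanes in general position, impose the remaining incidences of $L(\cA_t)$ as polynomial equations and the non-incidences as inequations. A primary decomposition of the resulting ideal $I_0$ should yield a single one-dimensional irreducible component, parametrized by $t$, while the finite set of excluded specializations matches exactly $Z$; these exclusions can then be verified by direct substitution of each candidate $\lambda$ into the forms of $R$.

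Parts (2) and (3) are immediate once (1) is set up: for $\lambda\in\{0,1,1/2\}$ one checks that two of the linear forms in $R$ become proportional, while for the four $\lambda$ listed in (3) the arrangement remains of size 15 but acquires extra coincidences among intersection points. Matching the combinatorial data with Gr\"{u}nbaum's catalogue identifies $L(\cA_\lambda)$ with that of the simplicial arrangement $\cA(15,1)$ of \cite{p-G-09}, and freeness with exponents $\lmultiset 1,5,9\rmultiset$ then follows from the known freeness of $\cA(15,1)$.

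Freeness of $\cA_t$ in (4) can be established by Yoshinaga's criterion (Proposition \ref{yoshinaga}): compute the Ziegler multirestriction onto one hyperplane, determine its exponents $(d_1,d_2)$, and match $d_1d_2$ against the product of the non-unit roots of $\chi(\cA_t,t)$ extracted from $L(\cA_t)$; the exponents $\lmultiset 1,7,7\rmultiset$ then drop out. The algorithm \texttt{ModuliSpaceIsFree} applied to $L(\cA_t)$ would in fact produce freeness uniformly for all $\lambda\notin Z$. The hard part is the non-recursive freeness. Using only the combinatorial data of $L(\cA_t)$, I would enumerate all candidate lines $H\subset\bP^2_{\bQ(t)}$ such that $\cA_t\cup\{H\}$ is a valid candidate for a free extension; by Theorem \ref{ad} this forces $|(\cA_t\cup\{H\})\cap H|=8$, which severely restricts the incidence vector $F_H$. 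Dually, any deletion $\cA_t\setminus\{H\}$ preserving freeness requires $n_{\cA_t\setminus\{H\},H}=8$, which contradicts the values of $n_{\cA_t,H}$ forced by $L(\cA_t)$. A case-by-case analysis, modeled closely on the calculation in Section \ref{13}, should then show that every candidate either forces $t$ into some finite exceptional set or yields a non-free arrangement.

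For (5), the positive direction requires exhibiting explicit recursive free filtrations for each of the six listed specializations: I would add a single line creating a new higher-multiplicity point and then delete hyperplanes one at a time, checking the numerical criterion of Theorem \ref{ad} at each step. The negative direction for the remaining $\lambda\notin Z\cup W$ follows from the same enumeration as in (4), now specialized to the given $\lambda$. Finally, for (6), $\Aut(L(\cA_t))$ is computed directly from the stratification of intersection points by multiplicity: one locates orbits of hyperplanes via the invariant $F_H$, exhibits an explicit subgroup of order $48$ acting on $L(\cA_t)$ (three commuting involutions generating $(\bZ/2\bZ)^3$ permuted by $\fS_3$), and verifies maximality by a finite search; the structure $\bZ/2\bZ\wr\fS_3\cong W(B_3)$ reflects a natural decomposition of $\cA_t$ into three pairs, each admitting an involutive symmetry, with $\fS_3$ permuting the pairs. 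The principal bottleneck throughout is the enumeration underlying (4); all remaining steps are routine algebra or combinatorics, feasible because $L(\cA_t)$ has only $15$ hyperplanes and the moduli space is one-dimensional.
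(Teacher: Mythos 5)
Your proposal follows essentially the same computational route as the paper: realize the moduli space via the algorithm of \cite{p-C10b}, verify freeness with a computational criterion (you propose Yoshinaga's criterion, which the paper already used for the $13$-plane example, while the paper here invokes Algorithm \texttt{ModuliSpaceIsFree} via Saito determinants---an equivalent alternative), and settle non-recursive freeness by exhausting the finitely many one-step additions that could preserve freeness, observing that each such addition forces a specialization of $t$. The only implicit step you correctly make explicit is that deletions are ruled out because $n_{\cA_t,H}\neq 8$ for all $H$, which is a finite check against $L(\cA_t)$; everything else matches the paper's strategy, with the remaining parts (2), (3), (6) being direct finite computations as you describe.
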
 
\begin{proof} 
(1) We compute the moduli space using the algorithm of \cite{p-C10b}. Notice that contrary to the example with $13$ planes, there are no automorphisms of the intersection lattice which induce projectivities here.\\ 
(4) We check the freeness results with Algorithm \ref{ModuliSpaceIsFree}.\\ 
(5) If $\tilde\cA$ is an arrangement consisting of $\cA_t$ plus an extra hyperplane $H$, then there are three possible cases. 
The hyperplane $H$ does not contain any intersection point of $\cA_t$, $H$ goes through exactly one ``old'' intersection point, or $H$ goes through at least two ``old'' intersection points. The first two cases are easy to handle. Thus we have finitely many possible hyperplanes going through at least two points of $\cA_t$ to check. For each such hyperplane $H$ we perform the following steps. 
\begin{itemize} 
\setlength{\itemindent}{-0.4\leftmargin} 
\item Compute the new intersection points $p_1,\ldots,p_k$ between $H$ and lines in $\cA_t$. 
\item Consider all solutions $t$ for which some $p_i$ coincides with an intersection point of $\cA_t$. 
\item For all these (finitely many) specializations $\lambda$ of $t$, check the recursive freeness of $\cA_\lambda$. 
\end{itemize} 
Specializing $t$ to $\lambda=-1, 1/3, 2, (1+\sqrt{-1})/2, (1-\sqrt{-1})/2, 2/3$ and including the kernels of $(2,-1,-1)$, $(0,0,1)$, $(2,0,1)$, $(2,0,1-\sqrt{-1})$, $(2,0,1+\sqrt{-1})$, $(4,3,3)$ respectively into $\cA_\lambda$ yields inductively free arrangements with exponents $\lmultiset 1,7,8\rmultiset$.\\ 
(2), (3), and (6) are direct computations performed with {\sc Singular} and {\sc GAP}. 
\end{proof} 
 
\end{section} 
 
\end{document}